\documentclass[12pt]{article}
\title{On the pair correlation function of the $\Sineb$ process}
\usepackage{palatino}
\date{}
\author{Yahui Qu and Benedek Valk\'o}

\oddsidemargin 0in \topmargin 0in \headheight 0in \headsep 0in
\textheight 9in \textwidth 6.5in

\usepackage{amsmath, amsthm, amssymb}
\usepackage{graphicx}
\usepackage{amsmath, enumerate}
\usepackage{color, url}

\usepackage{hyperref}
    \newtheorem{theorem}{Theorem}
    \newtheorem{lemma}[theorem]{Lemma}
    \newtheorem{proposition}[theorem]{Proposition}
    \newtheorem{corollary}[theorem]{Corollary}
    \newtheorem{claim}[theorem]{Claim}

\theoremstyle{definition} 
    \newtheorem{definition}[theorem]{Definition}

    \newtheorem{problem}{Problem}

\newcommand{\eps}{\varepsilon}
\newcommand{\Z}{{\mathbb Z}}
\newcommand{\ZZ}{{\mathbb Z}}

\newcommand{\R}{{\mathbb R}}
\newcommand{\CC}{{\mathbb C}}

\newcommand{\lstar}{{\raise-0.15ex\hbox{$\scriptstyle \ast$}}}

\theoremstyle{remark} 
\newcommand{\Sineb}{\operatorname{Sine}_{\beta}}
\newcommand{\Sine}{\operatorname{Sine}}

\newcommand{\HPb}{\operatorname{HP}_{\beta, \delta}}
\newcommand{\HPbb}{\operatorname{HP}_{\beta, \beta/2}}

\definecolor{violet}{rgb}{0.8,0,0.2}

\newcommand{\ed}{\stackrel{d}{=}}

\newcommand{\cG}{{\mathcal G}}

\newcommand{\mat}[4]{\left[ \hspace{-0.4em} \begin{array}{cc}
#1 & #2  \\
#3 & #4  \\
\end{array} \hspace{-0.4em}\right]}

\newcommand{\ind}{\mathbf 1}

\newcommand{\sinc}{\operatorname{sinc}}

\definecolor{darkgreen}{rgb}{.2,0.4,0.3}

\bibliographystyle{plain}

\begin{document}
\maketitle

\begin{abstract}
We study the $\Sineb$ process, the bulk point process scaling limit of beta-ensembles. We provide a representation of  its pair  correlation function for all $\beta>0$ via a stochastic differential equation. 
We show that the pair correlation function is continuous in $\beta$, and provide estimates for its asymptotic decay.  We
    recover the classical explicit formula for the pair correlation function in the $\beta=2$ and $4$ cases. For  $\beta=2n$, we derive the power series expansion of the pair correlation function, and express it 
    in terms of a size $n$ linear ordinary differential equation system. 
    We obtain our results by studying the density of the  $\HPb$ process, the point process limit of the circular Jacobi beta-ensembles. 
\end{abstract}

\section{Introduction}


The $\Sineb$ process is the bulk scaling limit of beta-ensembles. It is a translation invariant process with density $\tfrac1{2\pi}$. It was first introduced as the bulk scaling limit of the Gaussian beta-ensemble for general $\beta>0$ in \cite{BVBV}. Later it was shown to be the same as the point process limit of the circular beta-ensemble (\cite{BVBV_sbo},\cite{Nakano}) which was first derived by Killip and Stoiciu in \cite{KS}. Finally, \cite{BEY} proved that $\Sineb$ is the universal bulk scaling limit of a general class of beta-ensembles.

For a simple point process $\Xi$ on $\R$ and a Borel set $A\subset \R$, we denote by $\Xi(A)$ the number of points of $\Xi$ in $A$. 
The $k$-point correlation (or joint intensity) function of $\Xi$ (if it exists) is the symmetric function $\rho^{(k)}:\R^k\to \R_+$ that satisfies the following: if  $D_1, \dots, D_k\subset \R$ are disjoint Borel sets then
\begin{align}
E[\prod_{j=1}^k \Xi(D_j)]=\int_{D_1} \cdots  \int_{D_k} {\rho^{(k)}(x_1,\dots,x_k)} dx_k\, \dots dx_1.
\end{align}
The function $\rho^{(1)}$ (if it exists) is just the Radon–Nikodym derivative of the expected counting function $\nu(A)=E[\Xi(A)]$ with respect to the Lebesgue measure, this is the density of the point process $\Xi$.

Many of the classical results of random matrix theory use $k$-point correlation functions to identify the point process limit of various random matrix ensembles. For the classical values $\beta=1,2,4$, the finite beta-ensembles possess special algebraic structures. These processes are determinantal (for $\beta=2$) or Pfaffian (for $\beta=1, 4$), which means that it is possible to express their correlation functions in terms of determinantal or Pfaffian formulas built from the orthogonal polynomials of the reference measure of the ensemble. For circular and Gaussian beta-ensembles, one can take limits of these formulas to obtain explicit expressions for the $k$-point correlation of the $\Sineb$ process for $\beta=1,2,4$  (see \cite{AGZ, mehta, ForBook}).


In contrast, the $\Sineb$ process           for general $\beta>0$ is  characterized via its counting function using a one-parameter family of stochastic differential equations (\cite{BVBV}, \cite{KS}), or as the spectrum of a random differential operator (\cite{BVBV_sbo}). It has not been clear how one can obtain the classical $\beta=1,2,4$ results from these characterizations of $\Sineb$, and whether it is possible to get any information about the correlation functions. Our paper provides the first positive results in this direction, by studying the pair (or two-point) correlation function $\rho^{(2)}_\beta(x,y)$ of the $\Sineb$ process.

\subsection*{Results for general $\beta>0$}

Our first result provides a characterization of $\rho^{(2)}_\beta(0,\lambda)$ in terms of a stochastic differential equation. Since the $\Sineb$ process  is translation and reflection invariant, we have $\rho^{(2)}_\beta(x,y)=\rho^{(2)}_\beta(0,|y-x|)$. 
Here, and in the rest of the paper, for $x\in \R$ and a non-negative integer $k$ we use the following notation for the rising factorial (or Pochhammer symbol):
\begin{align}\label{eq:Poch}
    (x)^{\uparrow k}=\prod_{j=0}^{k-1}(x+j), 
\end{align}
with the empty product defined as 1. 
\begin{theorem}\label{thm:sineb_corr}
    Let $\beta>0$, $\lambda>0$, and consider the unique strong solution $\alpha_{\lambda}(u)$ of the following stochastic differential equation on $(-\infty,0]$:
    \begin{align}
    \label{eq:alphaSDE_b}
	d\alpha_\lambda = \lambda\tfrac{\beta}{4}e^{\frac{\beta}{4}u} du -\frac{\beta}{2} \sin(\alpha_\lambda(u)) du+ \Re[(e^{-i\alpha_\lambda(u)}-1)dZ], \qquad \lim_{u\to -\infty} \alpha_\lambda(u)=0. 
\end{align}
    The  pair correlation function of the $\Sineb$ process is given by
\begin{align}\label{eq:twopoint_cor}
        \rho_\beta^{(2)}(0,\lambda)=\frac{1}{4\pi^2}+\frac{1}{2\pi^2}\sum_{k=1}^\infty  \frac{(-\beta/2)^{\uparrow k}}{(1+\beta/2)^{\uparrow k}} E[\cos(k \alpha_\lambda(0)].
    \end{align}  
\end{theorem}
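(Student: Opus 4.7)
The plan is to reduce the pair correlation of $\Sineb$ to the density of the $\HPbb$ process and then evaluate that density via a representation of the form $g_\beta(\lambda) = E[h_\beta(\alpha_\lambda(0))]$, where $h_\beta$ is the normalized Jacobi weight on the unit circle. Since $\Sineb$ is stationary with intensity $\tfrac{1}{2\pi}$, the Campbell--Palm identity gives $\rho_\beta^{(2)}(0,\lambda) = \tfrac{1}{2\pi}\,g_\beta(\lambda)$, where $g_\beta(\lambda)$ is the density at $\lambda$ of the reduced Palm measure of $\Sineb$ at $0$. At the finite-$n$ level, conditioning $\CbE_n$ on a point at angle $0$ produces a $\CJbE_{n-1}$ with Jacobi parameter $\delta = \beta/2$, since the conditional joint density acquires the weight $\prod_j|e^{i\theta_j}-1|^\beta$. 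Passing to the bulk scaling limit identifies the Palm version of $\Sineb$ at $0$ with $\HPbb$, so $g_\beta$ is the density of $\HPbb$ at $\lambda$.

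I would read the SDE \eqref{eq:alphaSDE_b} as the bulk scaling limit of the phase function of the Szeg\H{o}/CMV recurrence for $\CJbE_n$ with $\delta = \beta/2$: the drift $-\tfrac{\beta}{2}\sin\alpha_\lambda$ encodes the Jacobi weight $|1-e^{i\theta}|^\beta$, the term $\lambda\tfrac{\beta}{4}e^{\beta u/4}\,du$ encodes the spectral parameter, and the condition $\alpha_\lambda(-\infty) = 0$ reflects the anchor of the Jacobi ensemble. The key identity to establish is
\[
g_\beta(\lambda) \;=\; E\bigl[h_\beta(\alpha_\lambda(0))\bigr],\qquad h_\beta(\alpha) := \frac{|1-e^{i\alpha}|^\beta}{\int_{-\pi}^{\pi}|1-e^{i\theta}|^\beta\,d\theta},
\]
i.e.\ the density of $\HPbb$ is obtained by averaging the normalized Jacobi weight against the law of the phase $\alpha_\lambda(0)$. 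The natural route is through finite $n$: the single-particle density of $\CJbE_n$ factors as $\rho_n^{CJ}(\theta) = |1-e^{i\theta}|^\beta \cdot R_n(\theta)$, and the bulk scaling limit of $R_n$ produces the random-variable interpretation via convergence of Verblunsky coefficients to the SDE solution. Expanding $h_\beta$ as a Fourier series on the circle, a direct computation using $(1-e^{i\alpha})^{\beta/2}(1-e^{-i\alpha})^{\beta/2}$ together with Vandermonde's identity yields
\[
h_\beta(\alpha) \;=\; \frac{1}{2\pi}+\frac{1}{\pi}\sum_{k\ge 1}\frac{(-\beta/2)^{\uparrow k}}{(1+\beta/2)^{\uparrow k}}\cos(k\alpha),
\]
which, after substituting into $\rho_\beta^{(2)}(0,\lambda) = \tfrac{1}{2\pi}g_\beta(\lambda)$ and exchanging expectation with the Fourier sum term by term, gives \eqref{eq:twopoint_cor}. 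As a sanity check, for $\beta = 2n$ the Pochhammer $(-n)^{\uparrow k}$ vanishes for $k > n$, which matches the promised finite power series expansion for even integer $\beta$.

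The hardest step is the representation $g_\beta(\lambda) = E[h_\beta(\alpha_\lambda(0))]$, which links the density of a point process to a single-variable functional of the SDE phase at $u = 0$. Establishing this cleanly will require: (i) a careful CMV/Verblunsky analysis of the $\CJbE_n$ density exposing the $|1-e^{i\theta}|^\beta$ prefactor together with a probabilistic interpretation of $R_n(\theta)$; (ii) bulk-scaling convergence of the Verblunsky dynamics to the SDE \eqref{eq:alphaSDE_b}; and (iii) quantitative decay estimates on $E[\cos(k\alpha_\lambda(0))]$ in $k$, needed to control the Fourier tail uniformly in $n$ and to justify term-by-term passage to the limit. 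The existence and uniqueness of the strong solution to \eqref{eq:alphaSDE_b} on $(-\infty,0]$ with the boundary condition at $-\infty$ is a secondary technical input, obtainable by adapting the phase-function analysis developed for the $\Sineb$ Brownian carousel SDE.
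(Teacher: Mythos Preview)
Your high-level reduction matches the paper: identify the reduced Palm measure of $\Sineb$ with $\HPbb$ (Proposition~\ref{prop:Palm}), so that $\rho_\beta^{(2)}(0,\lambda)=\tfrac{1}{2\pi}\rho_{\beta,\beta/2}^{(1)}(\lambda)$, and compute the Fourier expansion of the normalized Jacobi weight $h_\beta$ (this is exactly the calculation inside Lemma~\ref{lem:exp_count}). The formula you write for $h_\beta$ is correct, and no decay of $E[\cos(k\alpha_\lambda(0))]$ in $k$ is needed: the bound \eqref{eq:coeff_bnd} on the Fourier coefficients is already summable.

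The gap is in the step you flag as hardest, the identity $g_\beta(\lambda)=E[h_\beta(\alpha_\lambda(0))]$. From the shooting-problem description $\HPb=\{\lambda:\alpha_\lambda(0)=\Theta \bmod 2\pi\}$ with $\Theta$ \emph{independent} of the diffusion, differentiating the expected counting function naively produces $E\bigl[\partial_\lambda\alpha_\lambda(0)\cdot h_\beta(\alpha_\lambda(0))\bigr]$, not $E[h_\beta(\alpha_\lambda(0))]$; the Jacobian factor does not disappear for free. Your proposed route through the factorization $\rho_n^{\mathrm{CJ}}(\theta)=|1-e^{i\theta}|^\beta R_n(\theta)$ and Verblunsky limits does not resolve this: the bulk limit of $R_n$ is not the law of $\alpha_\lambda(0)$, and nothing in that sketch explains why the derivative drops out.

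The paper's proof of Theorem~\ref{thm:1p_HP} avoids any finite-$n$ step and works entirely with the limiting SDE. After Lemma~\ref{lem:exp_count} one has
\[
\pi\,E\bigl[\HPb[0,\lambda]\bigr]=\tfrac12 E[\alpha_\lambda(0)]+\sum_{k\ge1}\frac{c_k}{k}\,\Im E[e^{ik\alpha_\lambda(0)}],\qquad c_k=\frac{(-\delta)^{\uparrow k}}{(1+\delta)^{\uparrow k}}.
\]
Scale invariance \eqref{eq:scale_inv} turns the $\lambda$-increment into a $u$-integral of drifts on $[\nu,0]$. It\^o's formula \eqref{eq:QkSDE} shows that the drift of $e^{ik\alpha_\lambda}$ splits as $(\cG_\delta(e^{ij\alpha_\lambda}))_k+ik\lambda\tfrac{\beta}{4}e^{\beta u/4}e^{ik\alpha_\lambda}$, with $\cG_\delta$ the tridiagonal operator \eqref{def:G_delta}. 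The crucial point is the partial-summation identity of Claim~\ref{cl:partial_sum}: summed against $c_k/k$, the $\cG_\delta$-contributions telescope, and the surviving piece $\tfrac{\delta}{2}\Im(a_1-1)$ cancels exactly against the $-\delta\sin\alpha_\lambda$ drift coming from $\tfrac12 E[\alpha_\lambda(0)-\alpha_\lambda(\nu)]$. Only the $\lambda$-terms remain, giving $\pi E[\HPb[0,\lambda]]=\tfrac{\lambda}{2}+\int_0^\lambda\sum_k c_k E[\cos(k\alpha_x(0))]\,dx$, whose derivative is \eqref{eq:rho1_sum}. This algebraic cancellation \emph{is} the content of the identity you wrote down, and it is not visible from the CMV heuristic.
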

For $\delta>0$ one has the uniform bound
\begin{align}
\left|\frac{(-\delta)^{\uparrow k}}{(1+ \delta)^{\uparrow k}}\right|\le c k^{-1-2  \delta},
   \label{eq:coeff_bnd} 
\end{align}
with a $\delta$-dependent  constant $c$, hence \eqref{eq:twopoint_cor} gives an absolutely convergent sum. 

One can realize the solutions of \eqref{eq:alphaSDE_b} for all $\beta>0$ and $\lambda>0$ on the same probability space, in a way that the solution is almost surely continuous in its parameters. This allows us to prove the following regularity result. 
\begin{theorem}\label{thm:beta_cont}
    The function $\rho_{\beta}^{(2)}(0,\lambda)$ is continuous in both $\beta$ and $\lambda$.  For $\beta>2, \lambda>0$ the function is jointly continuous in $\lambda$ and $\beta$. 
\end{theorem}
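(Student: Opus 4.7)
My plan is to push the continuity statement through the series representation \eqref{eq:twopoint_cor}, using a pathwise coupling of the SDE solutions together with the uniform coefficient bound \eqref{eq:coeff_bnd}. There are two ingredients: almost sure continuity of $(\beta,\lambda)\mapsto \alpha_\lambda(0)$ under a suitable coupling, and uniform convergence of the series.

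\textbf{Step 1 (Coupling).} I would realize all solutions of \eqref{eq:alphaSDE_b} on a single probability space by driving them with the same complex Brownian motion $Z$. On a finite interval $[-T,0]$ with initial condition $\alpha_\lambda^T(-T)=0$, the drift and diffusion coefficients are smooth in $\alpha$ and smooth in $(\beta,\lambda)$, so classical stability results for SDEs give almost sure continuity of $(\beta,\lambda,T)\mapsto \alpha_\lambda^T(0)$. The next task is to show $\alpha_\lambda^T(0)\to \alpha_\lambda(0)$ as $T\to\infty$ in a way that preserves continuity: for this one uses the confining effect of the drift $-\tfrac{\beta}{2}\sin\alpha$ and the exponential smallness of the source term $\lambda\tfrac{\beta}{4}e^{\beta u/4}$ as $u\to-\infty$, via a Gronwall-type estimate for the difference $\alpha_\lambda^T-\alpha_\lambda^{T'}$. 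This yields almost sure continuity of $(\beta,\lambda)\mapsto \alpha_\lambda(0)$ under the coupling; continuity in a single variable (with the other fixed) should require less control than joint continuity.

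\textbf{Step 2 (Termwise convergence and uniform bounds).} Since $|\cos(k\alpha_\lambda(0))|\le 1$, bounded convergence turns the pathwise continuity from Step 1 into continuity of each expectation $E[\cos(k\alpha_\lambda(0))]$. The coefficients $(-\beta/2)^{\uparrow k}/(1+\beta/2)^{\uparrow k}$ are clearly continuous in $\beta$. For $\beta_0>0$, the estimate \eqref{eq:coeff_bnd} with $\delta=\beta/2$ gives a bound of the form $Ck^{-1-\beta_0/2}$ that is summable and uniform for $\beta$ in a neighborhood of $\beta_0$, hence the series in \eqref{eq:twopoint_cor} converges uniformly on compact subsets of the parameter space. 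Combining this with the continuity of each term yields the separate continuity in $\beta$ (with $\lambda$ fixed) and in $\lambda$ (with $\beta$ fixed).

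\textbf{Step 3 (Joint continuity and main obstacle).} The joint continuity on $\beta>2$ follows once the coupling in Step~1 produces $(\beta,\lambda)\mapsto \alpha_\lambda(0)$ that is jointly almost surely continuous: then bounded convergence applies to joint limits and the uniform tail bound from Step~2 closes the argument. The real difficulty is Step~1, because the initial condition at $u=-\infty$ prevents a direct application of classical parameter-stability theorems, and matching two solutions with different $\beta$ requires care near the degenerate regime $\alpha\approx 0$ where the diffusion coefficient vanishes. I expect the threshold $\beta>2$ to surface here: for $\beta>2$ the linearized fluctuations of $\alpha_\lambda$ around $0$ under the $u\to-\infty$ dynamics are integrable enough (an OU-type variance bound) to compare solutions jointly in $(\beta,\lambda)$, while for $\beta\le 2$ only the single-variable continuity survives.
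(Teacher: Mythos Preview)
Your Steps 1 and 2 match the paper's argument: it invokes a standard SDE-flow result (Proposition~\ref{prop:sde_cont}) to get separate continuity of $(\beta,\lambda)\mapsto\alpha_\lambda(0)$, passes to $q_k(\beta,\lambda)=E[e^{ik\alpha_\lambda(0)}]$ by bounded convergence, and then uses the uniform tail bound \eqref{eq:coeff_bnd} to conclude separate continuity of the series.

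Your Step 3, however, misidentifies both the mechanism and the source of the $\beta>2$ threshold. The paper does \emph{not} establish joint a.s.\ continuity of $(\beta,\lambda)\mapsto\alpha_\lambda(0)$ at all; in fact, extending joint continuity of $\rho_\beta^{(2)}(0,\lambda)$ to all $\beta>0$ is listed as an open problem. Instead, the paper uses the elementary fact that separate continuity plus a locally uniform Lipschitz bound in one variable implies joint continuity. The Lipschitz bound in $\lambda$ comes from the ODE system for $q_k$ (Proposition~\ref{prop:ODE_gen}), which gives $|\partial_\lambda \Re q_k(\beta,\lambda)|\le c(k^2\lambda^{-1}+k)$. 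Combining this with the coefficient bound $|(-\beta/2)^{\uparrow k}/(1+\beta/2)^{\uparrow k}|\le c k^{-1-\beta}$, the series of derivatives is dominated by $\sum_k (k^{1-\beta}\lambda^{-1}+k^{-\beta})$, and this is where $\beta>2$ enters: it is exactly the condition for $\sum k^{1-\beta}<\infty$.

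Your speculation that the threshold arises from ``OU-type variance bounds'' near $\alpha\approx 0$ as $u\to-\infty$ is not the operative mechanism; if a joint pathwise-continuity argument along your lines worked, there is no clear reason it would single out $\beta>2$, and it would in fact prove more than the theorem claims. The missing idea is to bypass pathwise joint continuity entirely and instead control $\partial_\lambda q_k$ via the infinite ODE hierarchy.
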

Identifying the large $\lambda$ behavior of 
the  pair correlation function $\rho^{(2)}_\beta(0,\lambda)$ for general $\beta>0$ has been a long-standing open problem. Using the representation of Theorem \ref{thm:sineb_corr} we are able to provide the following bound on the asymptotic decay of the truncated correlation function $\rho^{(2)}_\beta(0,\lambda)-\rho^{(1)}_\beta(\lambda)^2=\rho^{(2)}_\beta(0,\lambda)-\frac{1}{4\pi^2}$. 
\begin{theorem}[Decay of the truncated pair correlation]\label{thm:large_lambda}
There is a $\beta$-dependent constant $c$ so that we have the following bound for $\lambda\ge 2$: 
\begin{align}\label{eq:large_lambda}
     \left|\rho^{(2)}_\beta(0,\lambda)-\frac{1}{4\pi^2}\right|\le c\left(\lambda^{-1}+\lambda^{-\beta/2}+\lambda^{-4/\beta}\right).
\end{align} 
\end{theorem}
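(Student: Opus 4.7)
The proof begins from the series representation of Theorem~\ref{thm:sineb_corr}. Writing $c_k = (-\beta/2)^{\uparrow k}/(1+\beta/2)^{\uparrow k}$, which satisfies $|c_k| \le C k^{-1-\beta}$ by \eqref{eq:coeff_bnd}, we have
\[
\rho^{(2)}_\beta(0,\lambda) - \tfrac{1}{4\pi^2} = \tfrac{1}{2\pi^2}\sum_{k\ge 1} c_k\, E[\cos(k\alpha_\lambda(0))].
\]
The plan is to split the sum at a threshold $K = \lceil \sqrt{\lambda} \rceil$. For the tail, combining the coefficient bound with $|E[\cos(k\alpha_\lambda(0))]| \le 1$ gives
\[
\Bigl|\sum_{k>K} c_k E[\cos(k\alpha_\lambda(0))]\Bigr| \le C\sum_{k>K} k^{-1-\beta} \le C K^{-\beta} \le C\lambda^{-\beta/2},
\]
which accounts for the middle term of \eqref{eq:large_lambda}.

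For the head one needs a pointwise $\lambda$-decay estimate on each $E[\cos(k\alpha_\lambda(0))]$. The expected form is
\[
|E[\cos(k\alpha_\lambda(0))]| \le C\bigl(\lambda^{-4k^2/\beta} + F_\beta(k)\lambda^{-1}\bigr),
\]
with $F_\beta(k)$ polynomially bounded so that $\sum k^{-1-\beta}F_\beta(k) < \infty$. The Gaussian-type factor $\lambda^{-4k^2/\beta}$ reflects the logarithmic number variance of $\Sineb$: since $\alpha_\lambda(0)/(2\pi)$ tracks the counting function of $\Sineb$ on an interval of length $\lambda$, it has fluctuations of variance of order $8\log\lambda/\beta$, whence a Gaussian characteristic-function bound yields $|E[e^{ik\alpha_\lambda(0)}]|\lesssim \lambda^{-4k^2/\beta}$. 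The $\lambda^{-1}$ term is an oscillation error from the large deterministic phase in \eqref{eq:alphaSDE_b}: the drift $\lambda(\beta/4)e^{\beta u/4}$ integrates to a phase of order $\lambda$, and one integration by parts on the resulting oscillatory integrals (after the change of variable $v = e^{\beta u/4}$ on $(0,1]$) gains a factor $1/\lambda$. Summing over the head,
\[
\sum_{k\le K} |c_k|\bigl(\lambda^{-4k^2/\beta}+F_\beta(k)\lambda^{-1}\bigr) \le C\lambda^{-4/\beta} + C\lambda^{-1},
\]
where the first sum is dominated by $k=1$ and the second converges absolutely, possibly absorbing a logarithmic factor into the $\beta$-dependent constant.

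The main obstacle is establishing the head estimate. Applying It\^o's formula to $e^{ik\alpha(u)}$ yields an infinite coupled ODE system for $g_k(u) = E[e^{ik\alpha_\lambda(u)}]$ of the schematic form
\[
g_k'(u) = \bigl(ik\lambda\tfrac{\beta}{4}e^{\beta u/4} + A_k\bigr)g_k + B_k^+ g_{k+1} + B_k^- g_{k-1},
\]
with explicit $A_k, B_k^{\pm}$ depending on the quadratic variation of the noise term, with initial condition $g_k(-\infty) = 1$ (consequence of $\alpha_\lambda(-\infty)=0$). Passing to the rotating frame $h_k(u) = g_k(u)\exp(-ik\lambda e^{\beta u/4})$ removes the large deterministic drift and leaves couplings between $h_k$ and $h_{k\pm 1}$ carrying oscillatory factors $e^{\mp i\lambda e^{\beta u/4}}$. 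Integration by parts on these couplings produces the $\lambda^{-1}$ gain, while the real part of $A_k$ contributes the Gaussian-type decay in $k$. Controlling the propagation of errors through this infinite system uniformly for $k \le \sqrt\lambda$, and handling the boundary behavior of the oscillatory integrals near $v = 0$, constitutes the technical heart of the argument.
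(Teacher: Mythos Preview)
Your overall strategy matches the paper's proof: use the series of Theorem~\ref{thm:sineb_corr}, bound the tail by the coefficient decay, and for the head establish decay of $|E[e^{ik\alpha_\lambda(0)}]|$ in $\lambda$. The paper's Lemma~\ref{lem:E_alpha} is exactly your head estimate, with $F_\beta(k)=k^2$ and an extra $\log\lambda$ on the $\lambda^{-4k^2/\beta}$ term for $k\ge 2$ (harmless, since that term is subleading). The paper splits at $\sqrt\lambda$ only when $\beta<2$; for $\beta>2$ the full series $\sum k^{1-\beta}$ already converges, and for $\beta=2$ the sum has a single term.

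Where your sketch goes astray is in the mechanism and the difficulty of the head estimate. The factor $\lambda^{-4k^2/\beta}$ is not a Gaussian characteristic-function bound, and there is no infinite-system propagation to control. The paper applies It\^o to $e^{ik\alpha_\lambda(u)-ik\lambda e^{\beta u/4}+k^2 u}$ (your rotating frame, together with the damping factor $e^{k^2 u}$) and writes $E[e^{ik\alpha_\lambda(0)}]$ as an integral over $(-\infty,0]$ whose integrand is $e^{k^2 u-ik\lambda e^{\beta u/4}}$ times a linear combination of $E[e^{i(k\pm 1)\alpha_\lambda(u)}]$. Those neighboring modes are simply bounded by~$1$, which \emph{decouples} the estimate for each $k$. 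Splitting at $T_\lambda=-\tfrac{4}{\beta}\log\lambda$, the piece on $(-\infty,T_\lambda]$ is at most $C\int_{-\infty}^{T_\lambda}e^{k^2 u}\,du=Ck^{-2}\lambda^{-4k^2/\beta}$; on $[T_\lambda,0]$ one stochastic integration by parts against $\Lambda_k(u)=\int_{T_\lambda}^u e^{-ik\lambda e^{\beta s/4}+k^2 s}\,ds$ (which is $O(\lambda^{-1})$ by an elementary oscillatory-integral bound) gives the $k^2\lambda^{-1}$ term. The only feedback in the whole argument is a single bootstrap: to remove an unwanted logarithm in the $k=1$ bound, the paper inserts the already-proved $k=2$ bound via the scale invariance~\eqref{eq:scale_inv}. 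So the ``technical heart'' you anticipate---uniform control of error propagation through the infinite system---largely evaporates once you use $|E[e^{ij\alpha_\lambda}]|\le 1$ rather than chase the coupling.

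One bookkeeping correction: with $F_\beta(k)=k^2$, your convergence hypothesis $\sum k^{-1-\beta}F_\beta(k)<\infty$ fails for $\beta\le 2$. In that regime the truncated head sum $\sum_{k\le\sqrt\lambda}k^{1-\beta}\lambda^{-1}$ is $O(\lambda^{-\beta/2})$, not $O(\lambda^{-1})$; this is still within the bound~\eqref{eq:large_lambda}, but you should say so rather than assert absolute convergence.
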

For $\beta\ge 4$, the main term in our upper bound is $\lambda^{-4/\beta}$, which matches the conjectured order of the truncated pair correlation function in this regime (see equation \eqref{eq:rho2_2n_asympt} below, and the discussion around it). While preparing this manuscript, we have learned from Laure Dumaz and Martin Malvy \cite{DM2025} that they have also obtained polynomially decaying bounds on the truncated pair correlation function of $\Sineb$, with methods that are different from ours. 

\subsection*{Results for $\beta=2n$}

The beta-ensembles do not possess determinantal/Pfaffian structure for values other than the classical values 1, 2, and 4. However,  Forrester derived exact formulas  the circular beta-ensemble for $\beta=2n$, $n\in \ZZ_+$ (see \cite{Forrester_1992}, \cite{Forrester_1994}, and Chapter 13 of \cite{ForBook}). These formulas lead to an $n$-variable integral representation of the pair correlation function of the $\Sineb$ process when $\beta=2n$. We will review these results in more detail in the \textit{Historical background} section below.

We also present some new results for $\beta=2n$.  
The expression $(-x)^{\uparrow k}$ is zero if $x$ is a positive integer and $k>x$. This means that the expression \eqref{eq:twopoint_cor} in Theorem \ref{thm:sineb_corr} becomes a finite sum if $\beta=2n$. In this case we 
can provide more explicit representations of the pair correlation function.  We show that  $\rho^{(2)}_{2n}(0,\lambda)$ is analytic in $\lambda^2$,  express it in terms of the solution of an $n$-dimensional linear ODE system, and also as a power series with coefficients obtained from a matrix-valued recursion. For the full result see Theorem \ref{thm:even_beta}, Corollary \ref{cor:int_case_rho}, and the discussion following these statements. Here, we only present in detail the result of the power series representation.

Fix $n\in \Z_+$.  
Let $\mathbf{A}_{n}$ be a tridiagonal  and $\mathbf{B}_n$ a diagonal $n\times n$ matrix with nonzero entries given by 
\begin{align}\label{eq:AB}
      [\mathbf{A}_n]_{k,k}=-k^2, \quad [\mathbf{A}_n]_{k,k-1}=\frac12 k(k+n), \quad [\mathbf{A}_n]_{k,k+1}=\frac12 k(k-n),\quad [\mathbf{B}_n]_k=k.
\end{align}
Let $\mathbf{e}_n, \mathbf{v}_n\in \R^n$ be defined as 
\begin{align}\label{eq:vectors}\mathbf{e}_n=[1,0,\dots,0]^T, \qquad 
[\mathbf{v}_n]_k=(-1)^k\frac{ \binom{2n}{n+k}}{\binom{2n}{n}}, \qquad 1\le k\le n.
\end{align}

\begin{theorem}[Power series representation for $\rho_{2n}^{(2)}(0,\lambda)$]\label{thm:sineb_series}
Define $\mathbf{s}_k \in \CC^n$ recursively as
\begin{align}\label{eq:q_coeff}
    \mathbf{s}_0&=-\frac{n+1}{2}\mathbf{A}_n^{-1}\mathbf{e}_n, \qquad \mathbf{s}_k=i \left(k \mathbf{I}-\tfrac{2}{n} \mathbf{A}_n\right)^{-1} \mathbf{B}_n \mathbf{s}_{k-1}, \qquad k\ge 1.
\end{align}
Then 
\begin{align}\label{eq:rho2_2n}
    \rho_{2n}^{(2)}(0,\lambda)=\frac{1}{2\pi^2} \sum_{j=1}^\infty \mathbf{v}_n^{T} \cdot \mathbf{s}_{2j}\,  \lambda^{2j}.
\end{align}    
\end{theorem}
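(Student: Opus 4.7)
The plan is to specialize Theorem \ref{thm:sineb_corr} to $\beta=2n$, use Ito's formula on the SDE \eqref{eq:alphaSDE_b} to obtain a closed $n$-dimensional linear ODE for the Fourier moments $f_k(u):=E[e^{ik\alpha_\lambda(u)}]$, and then solve that ODE by an exponential series in $u$.

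For $\beta=2n$ the Pochhammer ratio simplifies to $\frac{(-n)^{\uparrow k}}{(n+1)^{\uparrow k}}=(-1)^k\binom{2n}{n+k}/\binom{2n}{n}=[\mathbf{v}_n]_k$ for $1\le k\le n$ and vanishes for $k>n$, so \eqref{eq:twopoint_cor} already reduces to a finite sum involving $\Re f_k(0)$. I would then apply Ito's formula to $e^{ik\alpha_\lambda(u)}$, using that the quadratic variation of the martingale part of \eqref{eq:alphaSDE_b} equals $2(1-\cos\alpha_\lambda)\,du$, and expand $\sin\alpha,\cos\alpha$ in terms of $e^{\pm i\alpha}$. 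Taking expectations yields the tridiagonal recurrence
\[
\frac{df_k}{du} = i\lambda\tfrac{kn}{2}e^{nu/2}f_k - k^2 f_k + \tfrac{k(k-n)}{2}f_{k+1} + \tfrac{k(k+n)}{2}f_{k-1},
\]
with $f_0\equiv 1$. The coefficient of $f_{k+1}$ vanishes at $k=n$, so the system closes on $\mathbf{f}=(f_1,\dots,f_n)^T$ and reads
\[
\mathbf{f}'(u) = \bigl(\mathbf{A}_n+i\lambda\tfrac{n}{2}e^{nu/2}\mathbf{B}_n\bigr)\mathbf{f}(u)+\tfrac{n+1}{2}\mathbf{e}_n,\qquad \lim_{u\to-\infty}\mathbf{f}(u)=\mathbf{1},
\]
the boundary datum following from the SDE condition $\alpha_\lambda(-\infty)=0$ and dominated convergence.

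To solve this I would substitute the ansatz $\mathbf{f}(u)=\sum_{k\ge 0}\mathbf{c}_k\,e^{knu/2}$ and match coefficients of $e^{knu/2}$. The constant term gives $\mathbf{A}_n\mathbf{c}_0+\tfrac{n+1}{2}\mathbf{e}_n=0$, so $\mathbf{c}_0=\mathbf{s}_0$; a short calculation shows $\mathbf{A}_n\mathbf{1}=-\tfrac{n+1}{2}\mathbf{e}_n$, confirming $\mathbf{s}_0=\mathbf{1}$ and consistency with the boundary condition at $-\infty$. For $k\ge 1$ one obtains $\mathbf{c}_k=i\lambda(k\mathbf{I}-\tfrac{2}{n}\mathbf{A}_n)^{-1}\mathbf{B}_n\mathbf{c}_{k-1}$, so $\mathbf{c}_k=\lambda^k\mathbf{s}_k$ with $\mathbf{s}_k$ as in \eqref{eq:q_coeff}. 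The resolvent bound $\|(k\mathbf{I}-\tfrac{2}{n}\mathbf{A}_n)^{-1}\|=O(1/k)$ together with $\|\mathbf{B}_n\|=n$ gives $\|\mathbf{s}_k\|\le C^k/k!$, so the series converges absolutely and uniformly on compacts in $(u,\lambda)$; by uniqueness of the bounded solution of the ODE with limit $\mathbf{1}$ at $-\infty$, this series \emph{is} $\mathbf{f}(u;\lambda)$. Evaluating at $u=0$ and noting that the recursion forces $\mathbf{s}_k\in i^k\R^n$ (so only even powers of $\lambda$ survive after taking the real part) yields
\[
\rho_{2n}^{(2)}(0,\lambda) = \frac{1}{4\pi^2} + \frac{1}{2\pi^2}\sum_{j\ge 0}\lambda^{2j}\mathbf{v}_n^T\mathbf{s}_{2j}.
\]
The $j=0$ contribution equals $\tfrac{1}{2\pi^2}\mathbf{v}_n^T\mathbf{1}=-\tfrac{1}{4\pi^2}$ by the standard binomial identity $\sum_{k=1}^n(-1)^k\binom{2n}{n+k}=-\tfrac12\binom{2n}{n}$, which cancels the constant term and delivers \eqref{eq:rho2_2n}.

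The main obstacle is the Ito step and its justification: one must verify that the stochastic integral in $d(e^{ik\alpha_\lambda})$ is a true martingale (so that $E[\cdot]$ and $d/du$ commute to produce the ODE), and then establish uniqueness of the bounded solution of the inhomogeneous linear system on $(-\infty,0]$ with the prescribed limit at $-\infty$. The bound $|f_k|\le 1$ is automatic, but uniqueness requires showing that any homogeneous solution that is bounded as $u\to-\infty$ must vanish identically --- a point that is presumably addressed in the proof of Theorem \ref{thm:even_beta}, whose ODE representation the present argument specializes and solves.
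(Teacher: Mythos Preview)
Your approach is essentially the same as the paper's: both derive the tridiagonal ODE system in $u$ for $E[e^{ik\alpha_\lambda(u)}]$ via It\^o's formula, expand as a series whose coefficients satisfy the recursion \eqref{eq:q_coeff}, and then use $\mathbf{v}_n^T\mathbf{s}_0=\mathbf{v}_n^T\mathbf{f}_n=-\tfrac12$ to absorb the constant. The only substantive difference is how the obstacle you flag is handled: the paper does not argue uniqueness of bounded solutions on $(-\infty,0]$ directly, but instead works with the approximants $\alpha_{\lambda,\nu}$ on finite intervals $[\nu,0]$ (where the martingale property and ODE uniqueness are automatic), expands the resulting $\mathbf{q}_\nu(u,\lambda)$ as a power series in $\lambda$ with $u$-dependent coefficients $\mathbf{s}_{j,\nu}(u)$, and then passes to the limit $\nu\to-\infty$ termwise using uniform bounds (Lemma~\ref{lem:coeff_s}). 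Both routes ultimately rest on the spectral fact that $\mathbf{A}_n$ has strictly negative eigenvalues (Lemma~\ref{lem:An_eval}), which the paper uses for the norm bounds on $e^{x\mathbf{A}_n}$ and which your uniqueness argument would need as well.
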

As part of the proof, we will show that the sequence $\mathbf{s}_k, k\ge 0$ is well-defined, and the power series converges for all $\lambda\in \R$. 

Theorem \ref{thm:sineb_series} allows us to recover the well-known Sine-kernel for the $\beta=2$ case. Indeed, if  $\beta=2$ (i.e.~$n=1$), then we have $\mathbf{A}_1=-1$, $\mathbf{B}_1=1$, $\mathbf{v}_1=-\tfrac{1}{2}$,  $\mathbf{s}_k=\tfrac2{(k+2)!} i^k $, hence \eqref{eq:rho2_2n} yields
\begin{align}\label{eq:sine_kernel}
    \rho_{2}^{(2)}(0,\lambda)=\frac{1}{4\pi^2}\left(1-\frac{\sin^2(\tfrac{\lambda}{2})}{(\lambda/2)^2}\right).
\end{align}
In Section \ref{sec:additional} we will find a closed formula for the function $\sum_{k=0}^\infty \mathbf{s}_{j}  \lambda^{j}$ in the $\beta=4$ case, which allows us to recover the known expression (see \eqref{eq:beta=4} below) for the pair correlation function of the $\Sine_4$ process.

As an additional application, we recover the asymptotics of the pair correlation function near 0 in the $\beta=2n$ case. (This also follows directly from the integral representation of Forrester, see the discussion below.) 

\begin{corollary}\label{cor:small_lambda}
  We have the following asymptotics for $\lambda$ near 0:  
  \begin{align}\label{eq:small_lambda}
      \rho_{2n}^{(2)}(0,\lambda)= \frac1{4\pi^2} \cdot \frac{n^{2n} (n!)^3 }{(2n!)(3n!) }\lambda^{2n}+\mathcal{O}(\lambda^{2n+2}).
  \end{align}
  
\end{corollary}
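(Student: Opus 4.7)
The plan is to apply Theorem \ref{thm:sineb_corr} specialized to $\beta = 2n$. For this value of $\beta$, the Pochhammer coefficient $\frac{(-n)^{\uparrow k}}{(n+1)^{\uparrow k}}$ vanishes for $k > n$ and equals $[\mathbf{v}_n]_k = (-1)^k \binom{2n}{n+k}/\binom{2n}{n}$ for $1 \le k \le n$, so the formula reads
\begin{equation*}
\rho_{2n}^{(2)}(0,\lambda) = \frac{1}{4\pi^2} + \frac{1}{2\pi^2}\sum_{k=1}^n [\mathbf{v}_n]_k E[\cos(k\alpha_\lambda(0))].
\end{equation*}
I would then Taylor-expand each cosine and combine this with a small-$\lambda$ perturbative analysis of the SDE \eqref{eq:alphaSDE_b}. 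Since $\alpha_0 \equiv 0$ and $\alpha_\lambda$ depends smoothly on $\lambda$ (with the parity $\alpha_{-\lambda} \ed -\alpha_\lambda$ following from the symmetry $Z \mapsto -\bar Z$, which forces all expansions below to be in even powers of $\lambda$), one obtains $E[\alpha_\lambda(0)^{2\ell}] = m_{2\ell}\lambda^{2\ell} + O(\lambda^{2\ell+2})$, where $m_{2\ell} = E[X(0)^{2\ell}]$ and $X$ is the $\lambda$-derivative of $\alpha_\lambda$ at $\lambda = 0$, solving the linearized SDE $dX = \tfrac{\beta}{4}e^{\beta u/4}du - \tfrac{\beta}{2}X\,du + X\,\Im(dZ)$ with $X(-\infty) = 0$.

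The combinatorial engine is the classical finite-difference identity $\sum_{m=0}^{2n}(-1)^m\binom{2n}{m}P(m) = 0$ for every polynomial $P$ with $\deg P < 2n$, together with $\sum_{m=0}^{2n}(-1)^m\binom{2n}{m}(m-n)^{2n} = (2n)!$. Applied with $P(m) = (m-n)^{2\ell}$ and the substitution $m = n+k$, these yield
\begin{equation*}
\sum_{k=1}^n [\mathbf{v}_n]_k k^{2\ell} = \begin{cases} -1/2 & \ell = 0, \\ 0 & 1 \le \ell \le n-1, \\ (-1)^n (n!)^2/2 & \ell = n. \end{cases}
\end{equation*}
Substituting the Taylor expansion into the formula for $\rho_{2n}^{(2)}(0,\lambda)$: the constant $\frac{1}{4\pi^2}$ cancels against the $\ell = 0$ contribution (which contributes only at $j = 0$); for each $1 \le j \le n-1$ the coefficient of $\lambda^{2j}$ collects only contributions from $\ell \in \{1,\dots,j\} \subset \{1,\dots,n-1\}$, all of which vanish by the identity above; and the coefficient of $\lambda^{2n}$ reduces to $\frac{(n!)^2 m_{2n}}{4\pi^2(2n)!}$.

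The remaining task is to verify that $m_{2n} = \frac{n^{2n}\, n!}{(3n)!}$ for $\beta = 2n$. The linearized SDE for $X$ is linear with multiplicative noise and so can be solved in closed form by an integrating-factor argument, writing $X(0)$ as an explicit stochastic integral against a geometric-Brownian background; the $2n$-th moment then admits a direct Gaussian-type evaluation (the case $n = 1$ is already visible from $dE[X^2]/du = E[X]e^{u/2} - E[X^2]$, which gives $m_2 = 1/6$, consistent with the sine-kernel). This explicit moment calculation is the main technical obstacle of the proof. Alternatively, as the corollary statement itself notes, the value of the $\lambda^{2n}$ coefficient can be read off from Forrester's integral representation of the pair correlation function at $\beta = 2n$, which yields the same Mehta--Dyson constant and can be taken as an external input fixing $m_{2n}$.
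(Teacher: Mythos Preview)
Your reduction is sound and genuinely different from the paper's route. The paper never touches the moments $E[\alpha_\lambda(0)^{2\ell}]$ directly; instead it works with the functions $q_k(\lambda)=E[e^{ik\alpha_\lambda(0)}]$, proves that $\mathbf q(\lambda)=(q_1,\dots,q_n)^T$ solves a closed $n\times n$ linear ODE (Theorem~\ref{thm:even_beta}), and then shows purely algebraically that $\mathbf v_n\cdot\mathbf s_{2\ell}=0$ for $1\le\ell\le n-1$ and computes $\mathbf v_n\cdot\mathbf s_{2n}$ by tracking how $(\mathbf I-\gamma\mathbf A_n)^{-1}\mathbf B_n$ acts on the flag spanned by $\mathbf B_n^j\mathbf f_n$. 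Your finite-difference identity for $\sum_k[\mathbf v_n]_k k^{2\ell}$ is exactly Claim~\ref{cl:vAnBnf}~\eqref{eq:vBnf}, so the combinatorial core is shared; your approach then replaces the matrix algebra by a direct moment computation for the linearised process $X$. In fact your expansion is equivalent to the identity $1+2\sum_k[\mathbf v_n]_k\cos(k\alpha)=\binom{2n}{n}^{-1}2^{2n}\sin^{2n}(\alpha/2)$, which makes the vanishing below order $2n$ transparent.

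There is one real gap and one over-optimistic phrase. The gap is that you defer the computation of $m_{2n}=E[X(0)^{2n}]$ and describe it as a ``Gaussian-type evaluation'' after writing $X(0)$ as a stochastic integral against a geometric Brownian motion. But $X(0)$ is \emph{not} Gaussian (the noise is multiplicative), so Wick-type formulas do not apply. The correct and very short argument is to use It\^o on $X^k$: setting $\mu_k(u)=E[X(u)^k]$ you get $\mu_k'=\frac{k\beta}{4}e^{\beta u/4}\mu_{k-1}+\bigl(\tfrac{k(k-1)}{2}-\tfrac{k\beta}{2}\bigr)\mu_k$ with $\mu_k(-\infty)=0$, whose unique bounded solution is $\mu_k(u)=c_k e^{k\beta u/4}$ with $c_k=\frac{\beta/2}{3\beta/2-(k-1)}\,c_{k-1}$. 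For $\beta=2n$ this telescopes to $c_{2n}=\prod_{j=1}^{2n}\frac{n}{3n-j+1}=\frac{n^{2n}n!}{(3n)!}$, as required. (This is secretly the same product $\prod_j c_{j,4/(\beta j),0,n}$ that appears in the paper's proof of Proposition~\ref{prop:small_lambda_asym}.) The second, smaller issue is justifying $E[\alpha_\lambda(0)^{2\ell}]=m_{2\ell}\lambda^{2\ell}+O(\lambda^{2\ell+2})$: almost-sure analyticity of $\lambda\mapsto\alpha_\lambda(0)$ is not by itself enough to pass the Taylor expansion inside the expectation --- you need moment bounds on the $\lambda$-derivatives of $\alpha_\lambda$, which can be obtained but should be stated. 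The paper sidesteps this entirely since the analyticity of $q_k(\lambda)$ is established as part of Theorem~\ref{thm:even_beta}.
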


\subsection*{Strategy of the proof}

Our starting point is the following observation from \cite{BVBV_Palm}: if one can properly define the $\Sineb$ process conditioned to have a point at 0, then the one-point function of the conditioned process is  $\frac{\rho_\beta^{(2)}(0,x)}{\rho_\beta^{(1)}(x)}$. 

The $\Sineb$ process is the point process limit of the circular beta-ensemble. We can readily condition a size  $n$ circular beta-ensemble to have a point at angle 0. After removing this point, we obtain the size $n-1$ circular Jacobi beta-ensemble with parameter $\delta=\beta/2$. The scaling limit of the circular Jacobi beta-ensemble was obtained in \cite{LV_HP}, this is the $\HPb$ process. (See Section \ref{sec:background} for precise statements.) We will show (using the results of \cite{BVBV_Palm}) that the $\HPb$ process with $\delta=\beta/2$ is the same as the $\Sineb$ process conditioned to have a point at 0, with that point removed (see Proposition \ref{prop:Palm} below). In particular, with $\rho_{\beta,\delta}^{(1)}(\lambda)$ denoting the density of $\HPb$, we will show that  
\begin{align}\label{eq:palm}
  \rho_\beta^{(2)}(0,\lambda)=  \rho_{\beta,\beta/2}^{(1)}(\lambda){\rho_\beta^{(1)}(\lambda)}=\frac{1}{2\pi} \rho_{\beta,\beta/2}^{(1)}(\lambda).
\end{align}
\cite{LV_HP} provides a characterization of $\HPb$   as the result of a shooting problem involving an SDE similar to \eqref{eq:alphaSDE_b}. Using this characterization, we can prove a version of Theorem \ref{thm:sineb_corr} for $\rho_{\beta,\delta}^{(1)}(\lambda)$ with $\beta>0, \delta>0$, see Theorem \ref{thm:1p_HP} below. Theorem \ref{thm:sineb_series} and Corollary \ref{cor:small_lambda} will also follow from corresponding results on the density of the $\HPb$ process.

\subsection*{Historical background}

As mentioned before, for $\beta=2$, the circular and the circular Jacobi beta-ensembles are determinantal, the $k$-point correlation functions can be obtained as the determinant of a $k\times k$ matrix built from a kernel function. Taking the scaling limit of these formulas, one obtains exact determinantal expressions for all correlation functions of $\Sineb$ and $\HPb$ for $\beta=2$, in particular, one obtains \eqref{eq:sine_kernel}. For $\beta=1$ and $\beta=4$, one can write down the correlation functions of the finite and the limiting ensembles in terms of Pfaffian formulas (\cite{mehta}, \cite{AGZ}, \cite{ForBook}). For $\beta=4,$ this leads to the following expression for the pair correlation function of the $\Sineb$ process:
\begin{align}\label{eq:beta=4}
     &\rho_{4}^{(2)}(0,\lambda) = \frac{1}{4\pi^2}\left(1- \sinc^2(\lambda) + \sinc'(\lambda)\int_0^{\lambda} \sinc(t)dt\right),\qquad \sinc(x)=\frac{\sin(x)}{x}.
\end{align}
 When $\beta=2n$ (i.e.~$\beta$ is an even integer), Forrester 
  provides a representation for the correlation functions of the circular beta-ensemble in terms of generalized hypergeometric functions and Jack polynomials (see \cite{Forrester_1992}, \cite{Forrester_1994}, and Chapter 13 of \cite{ForBook}), and evaluates the scaling limit of these functions. 
We record the result for the limit of the pair correlation function here, which, if one can justify the exchange of limits,  gives $\rho_{2n}^{(2)}(0,\lambda)$:
\begin{align}\notag
 &\frac1{4\pi^2} \cdot \frac{n^{2n} (n!)^3 }{(2n!)(3n!) } \frac{e^{-i n \lambda} \lambda^{2n}}{S_{2n}(-1+1/n,-1+1/n, 1/n)}\times\\
 &\qquad \int_{[0,1]^{2n}} \prod_{j=1}^{2n} \left(e^{i \lambda u_j} u_j^{-1+1/n} (1-u_j)^{-1+1/n} \right)\prod_{j<k}|u_j-u_k|^{2/n} \prod_{j=1}^{2n}du_j .\label{eq:paircorr_For}
\end{align}
Here $S_{2n}(-1+1/n,-1+1/n, 1/n)$ is the Selberg integral on $[0,1]^{2n}$ that one obtains by setting $\lambda=0$ in the integral in the second line of \eqref{eq:paircorr_For}. Note we can immediately identify the constant in the asymptotics \eqref{eq:small_lambda} by taking $\lambda\to 0$. It would be interesting to see if one could recover the power series representation \eqref{eq:rho2_2n} directly from this result, or vice versa.


By analyzing the integral in \eqref{eq:paircorr_For},
Forrester provides the following large $\lambda$ asymptotics of \eqref{eq:paircorr_For} for $\beta=2n$:
\begin{align}\notag
4\pi^2    \rho_{\beta}^{(2)}(0,\lambda)=&1-\frac{4}{\beta \lambda^2}+\mathcal{O}(\lambda^{-4})\\
&+\sum_{k=1}^{n}\frac{a_k}{\lambda^{4k^2/\beta}}\left(\cos(k \lambda)+b_k\frac{\sin(k\lambda)}{\lambda}+\mathcal{O}\left(\frac{|\cos(k\lambda)|}{\lambda^2}\right)\right)\label{eq:rho2_2n_asympt},
\end{align}
with explicitly given coefficients $a_k, b_k$ (see Proposition 13.13, \cite{ForBook}).
Similar asymptotics, with an infinite sum of oscillating terms in \eqref{eq:rho2_2n_asympt}, 
were obtained by Haldane \cite{Haldane1981}
in the study of one-component, one-dimensional quantum fluids. It is conjectured that Haldane's asymptotics hold 
for the pair correlation $\rho_{\beta}^{(2)}(0,\lambda)$ for general $\beta>0$ as well (see e.g.~\cite{Forrester_1984}). Note that the largest order term in this conjectured expansion is of order $\lambda^{-\min(2,4/\beta)}$. Our bound in Theorem \ref{thm:large_lambda} matches this order for $\beta\ge 4$.

In \cite{Forrester_20212} (building on the results of \cite{FR2012}) Forrester outlines how one can represent $\rho_{2n}^{(2)}(0,\lambda)$ as the solution of an order $2n+1$ differential equation, and provides these representations for $\beta=1,2,4,6$. \cite{Forrester_2021} gives similar results for the scaling limit of the one-point function of the circular Jacobi beta-ensemble.







The random operator approach to study the point process limits of beta-ensembles was initiated by Dumitriu-Edelman \cite{DE} and Edelman-Sutton \cite{ES}. Building on their work, the point process limits of beta-ensembles have been characterized as the spectrum of random differential operators, or by describing the counting function using systems of stochastic differential equations \cite{RRV}, \cite{RR}, \cite{KS}, \cite{BVBV}, \cite{BVBV_sbo}, \cite{LV_HP}. These descriptions provided tools to study various asymptotic quantities related to these processes (e.g.~large gap asymptotics \cite{BVBV2}, \cite{RRZ}, asymptotics for the counting function \cite{KVV}, \cite{HP2018}, large deviations for the number of points in a small or a large interval \cite{HV}, \cite{HV2}), but before the current paper there have been only a few results that recovered the exact formulas known for the classical beta values. The most significant result where this is actually achieved  is from \cite{Spike1}, where  the authors recovered explicit formulas for $\beta=2,4$ for the soft-edge limit of the Gaussian beta-ensemble, providing an independent proof of the Painlev\'e representations of 
Tracy-Widom distributions originally given in \cite{TW1994}, \cite{TW1996}.

There are several other promising approaches to study point process limits of beta-ensembles for general $\beta>0$. \cite{DLR2021} gives a characterization of the $\Sineb$ process using the Dobrushin-Lanford-Ruelle formalism, \cite{Leble2021} uses these results to prove a CLT for the linear statistics for $\Sineb$. 
The method of loop equations (introduced in \cite{Johansson1998} provides a powerful tool to study finite beta-ensembles, see e.g.~\cite{Lambert2021} for an application.

We also mention here the unpublished preprint  \cite{okounkov1997}, where the limit of the $k$-point correlation function of the circular beta-ensemble is presented in the case when $\beta$ is a rational number.

\subsection*{Outline of the paper}

In Section \ref{sec:background}, we review some known results on the circular and circular Jacobi beta-ensembles and their point process limits. We also sketch the proof of equation \eqref{eq:palm}. Section \ref{sec:1point} contains the proof of Theorem \ref{thm:sineb_corr} and the analogous result on the density of the $\HPb$ process with $\delta>0$.  Section \ref{sec:large_lambda} provides the proof for the large $\lambda$ asymptotics of $\rho_{\beta}^{(2)}(0,\lambda)$, and Section  \ref{sec:delta_n} provides the results for the $\beta=2n$ case, with corresponding results about $\HPb$ with $\delta=n$. 
Section \ref{sec:additional} provides some additional results, including the proof of Theorem \ref{thm:beta_cont}. 
Finally, Section \ref{sec:open} lists some open problems.

\subsection*{Acknowledgments} 

The authors  thank Laure Dumaz, Martin Malvy and Yun Li for useful discussions. 
B.V.~was partially supported by  the University of Wisconsin – Madison Office of the Vice Chancellor for Research and Graduate Education with funding from the Wisconsin Alumni Research Foundation and by the National Science Foundation award DMS-2246435.

\section{Preliminaries on beta-ensembles}\label{sec:background}

\subsection{The circular and the circular Jacobi beta-ensemble and their limits}

Let $\beta>0$, $n\in \Z_+$, and $\delta\in \CC$ with $\Re \delta>-1/2$. The size $n$ circular Jacobi beta-ensemble with parameters $\beta, \delta$ is the joint distribution  of 
$n$  points $\{e^{i \theta_1}, \dots, e^{i \theta_n}\}$ on the unit circle $\{|z|=1\}$, where the joint density function of the angles $\theta_j\in [-\pi,\pi)$, is given by
\begin{align}
 \frac{1}{Z_{n,\beta,\delta}^{\textup{CJ}}} \prod_{1\le j<k\le n} \left|e^{i \theta_j}-e^{i \theta_k}\right|^\beta  \prod_{k=1}^n (1-e^{- i \theta_k})^{ \delta}(1-e^{ i \theta_k})^{ \bar\delta} \label{eq:PDF_cjacobi}, \qquad \theta_j\in [-\pi,\pi).
\end{align}
Here  $Z_{n,\beta,\delta}^{\textup{CJ}}$ is an explicitly computable normalizing constant. For $\delta=0$ the distribution becomes the circular beta-ensemble. We  write $\Lambda_n\sim \textup{CJ}_{n,\beta,\delta}$ to denote that the random set $\Lambda_n=\{\theta_1, \dots, \theta_n\}$ has  joint density given by \eqref{eq:PDF_cjacobi}, and we use the notation $\textup{C}_{n,\beta}$ for $\textup{CJ}_{n,\beta,0}$, the size $n$ circular beta-ensemble.

Killip and Stoiciu \cite{KS} showed that if $\Lambda_n\sim \textup{C}_{n,\beta}$ then $n\Lambda_n$ converges to a point process as $n\to \infty$. The point process limit was characterized via a one-parameter family of stochastic differential equations via a shooting problem.  This process was later shown to be the same as the   $\Sineb$ process, the bulk scaling limit of the Gaussian beta-ensemble \cite{BVBV_sbo}, \cite{Nakano}, which can also be characterized as the spectrum of a random differential operator.

In \cite{LV_HP} it vas shown that if $\Lambda_n\sim \textup{CJ}_{n,\beta,\delta}$ then $n \Lambda_n$ converges to a point process $\HPb$. (Note that $\HPb$ with $\delta=0$ is just $\Sineb$.) This process can also be characterized in multiple ways (as the spectrum of a certain random differential operator, as the zero set of a certain random entire function). We review the characterization that will be useful for our purposes. 

Let $Z=d\mathbf{B}_1+i d\mathbf{B}_2$ be a two-sided complex Brownian motion with standard independent real and imaginary parts. 
For a given $\beta>0, \Re \delta>-1/2$ consider the unique strong solution $\alpha_\lambda(t)$  of the following one-parameter family of stochastic differential equations:  
\begin{align}
    \label{eq:alphaSDE}
	d\alpha_\lambda = \lambda\tfrac{\beta}{4}e^{\frac{\beta}{4}u} du + \Re[(e^{-i\alpha_\lambda(u)}-1)(dZ-i\delta du)],\qquad \lambda\in \R, t\in(-\infty,\infty) 
\end{align}
with the initial condition
\begin{align}\label{eq:SDE_initial}
    \lim_{u\to-\infty} \alpha_\lambda(u)=0, \qquad \lambda\in \R. 
\end{align}
Let $\Theta$ be a random variable on $[0,2\pi)$ with probability density function 
\begin{align}\label{eq:Theta}
  {\frac{1}{2\pi}} \frac{\Gamma(1+\delta)\Gamma(1+\bar\delta)}{\Gamma(1+\delta+\bar\delta)}(1-e^{-i \theta})^\delta  (1-e^{i \theta})^{\bar \delta}, \qquad \theta\in [0,2\pi),
\end{align}
independent of $Z$.
Then \cite{LV_HP} proved that
\begin{align}\label{eq:HP}
    \HPb\ed\{\lambda\in \R: \alpha_\lambda(0)=\Theta \mod 2\pi\}.
\end{align}
For $\delta=0$ this recovers the characterization of $\Sineb$ given in \cite{KS},  in that case $\Theta$ is uniform on $[0,2\pi)$. Note that the diffusion \eqref{eq:alphaSDE_b} is the same as  \eqref{eq:alphaSDE} with initial condition \eqref{eq:SDE_initial} and $\delta=\beta/2$.

Proposition 9 of \cite{BVBV} provides an exponential tail bound on $\Sineb[0,\eps]$ (i.e.~the number of points in $[0,\eps]$) from which it follows that the $k$th moment of $\Sineb[0,\eps]$ is bounded by a constant multiple of $\eps^k$ as $\eps\to 0$. From this it follows that the $k$th moment measure is absolutely continuous with respect to the Lebesgue measure, and this implies the existence of the $k$th correlation  function for the stationary process $\Sineb$ for all $k\ge 1$ (see \cite{DVJ1}).

\subsection{Properties of the $\alpha_\lambda$ diffusion}

The following proposition summarizes some of the useful properties of the diffusion \eqref{eq:alphaSDE},  see \cite{LV_HP} for additional details and proofs.

\begin{proposition}\label{prop:alphaSDE}Fix $\beta>0$ and $\Re \delta>-1/2$.

\begin{enumerate}
    \item For any $\nu\in \R$ the system \eqref{eq:alphaSDE} has a unique strong solution $\alpha_{\lambda,\nu}(u)$ on $[\nu,\infty)$ with initial condition $\alpha_{\lambda,\nu}(\nu)=0$. For any given $\nu< u$, with probability one the function $\lambda\mapsto \alpha_{\lambda,\nu}(u)$ is analytic and strictly increasing with $\alpha_{0,\nu}(u)=0$ 


    \item There is a unique strong solution $\alpha_\lambda(u)$ of \eqref{eq:alphaSDE} on $(-\infty,\infty)$ satisfying the initial condition \eqref{eq:SDE_initial}, which  can be obtained as the a.s.~limit $\alpha_\lambda(u)=\lim\limits_{\nu\to -\infty} \alpha_{\lambda,\nu}(u)$. The limit is monotone increasing for $\lambda>0$ and monotone decreasing for $\lambda<0$. 
    
    For any $u\in \R$ with probability one the function $\lambda\mapsto \alpha_\lambda(u)$ is analytic and strictly increasing in $\lambda$ with $\alpha_0(u)=0$. 

    Moreover, $\alpha_\lambda(u)$ satisfies the following scale-invariance property: for any fixed $s\in \R$ 
\begin{align}\label{eq:scale_inv}
    \left(\alpha_\lambda(t), t\in (-\infty,\infty)\right) \ed  \left(\alpha_{e^{\beta s/4}\lambda}(t-s), t\in (-\infty,\infty)\right). 
\end{align}  
\end{enumerate}
\end{proposition}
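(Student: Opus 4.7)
The plan is to follow the SDE analysis of \cite{LV_HP}, exploiting three features of the system \eqref{eq:alphaSDE}: the drift and noise coefficients are bounded and globally Lipschitz in $\alpha$ (since $|e^{-i\alpha}-1|\le |\alpha|\wedge 2$ and $|e^{-i\alpha_1}-e^{-i\alpha_2}|\le |\alpha_1-\alpha_2|$), the $\lambda$-dependence enters only through the smooth deterministic forcing $\lambda\tfrac{\beta}{4}e^{\beta u/4}$, and this forcing is exponentially damped as $u\to -\infty$, which is what will allow the limit $\nu\to -\infty$ to make sense.

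For part (1), local existence and uniqueness on $[\nu,\infty)$ is standard Picard iteration, and boundedness of the noise coefficient precludes explosions, yielding a global strong solution. To see that $\lambda\mapsto \alpha_{\lambda,\nu}(u)$ is analytic and strictly increasing, I would formally differentiate in $\lambda$; writing $\eta_\lambda:=\partial_\lambda \alpha_{\lambda,\nu}$ one obtains the linear SDE
\begin{align*}
 d\eta_\lambda=\tfrac{\beta}{4}e^{\beta u/4}du-\Re\bigl[i e^{-i\alpha_\lambda}\eta_\lambda\,(dZ-i\delta du)\bigr],\qquad \eta_\lambda(\nu)=0,
\end{align*}
which can be solved by a stochastic exponential $E(u)$ as $\eta_\lambda(u)=E(u)\int_\nu^u E(s)^{-1}\tfrac{\beta}{4}e^{\beta s/4}ds$; positivity of the integrand gives $\eta_\lambda(u)>0$ for $u>\nu$, i.e.\ strict monotonicity. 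Iterating the differentiation produces a linear SDE for each higher derivative, with bounds permitting a convergent Taylor expansion in $\lambda$ (equivalently, extend $\lambda$ to complex values and invoke smooth dependence of SDE solutions on parameters). The identity $\alpha_{0,\nu}\equiv 0$ is immediate since $\alpha\equiv 0$ solves the $\lambda=0$ SDE and strong uniqueness applies.

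For part (2), existence of $\alpha_\lambda(u)=\lim_{\nu\to-\infty}\alpha_{\lambda,\nu}(u)$ follows by combining monotonicity in $\nu$ with a uniform moment bound. For $\lambda>0$, part (1) gives $\alpha_{\lambda,\nu}\ge 0$; for $\nu_1<\nu_2$ the process $\alpha_{\lambda,\nu_1}$ arrives at time $\nu_2$ at a nonnegative value while $\alpha_{\lambda,\nu_2}(\nu_2)=0$, so a one-dimensional SDE comparison (Yamada--Watanabe type, applicable because the coefficients are smooth in $\alpha$) yields $\alpha_{\lambda,\nu_1}(u)\ge \alpha_{\lambda,\nu_2}(u)$ for $u\ge \nu_2$. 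For a uniform $L^2$ bound I would apply It\^o's formula to $\alpha^2$ and use the linearization near $\alpha=0$ to write the drift as $-\Re(\delta)\alpha+O(\alpha^2)$ plus the forcing $\lambda\tfrac{\beta}{4}e^{\beta u/4}$; integrability of the forcing on $(-\infty,u]$ closes a Gronwall estimate that bounds $E[\alpha_{\lambda,\nu}(u)^2]$ uniformly in $\nu$, implying almost sure finiteness of the monotone limit and also the boundary condition $\lim_{u\to -\infty}\alpha_\lambda(u)=0$. The case $\lambda<0$ is symmetric via $\alpha\to -\alpha$, and analyticity and strict monotonicity in $\lambda$ pass through the monotone a.s.\ limit. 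Finally, the scale invariance \eqref{eq:scale_inv} follows from the change of variables $u\mapsto u-s$: the time-shifted process satisfies \eqref{eq:alphaSDE} with $\lambda$ replaced by $e^{\beta s/4}\lambda$ and driving Brownian motion $\tilde Z(u)=Z(u+s)-Z(s)$, which is again a two-sided standard complex Brownian motion, and the boundary condition at $-\infty$ is preserved; strong uniqueness then forces equality in law.

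The main obstacle is the combination of monotone convergence and the comparison principle in part (2): because the diffusion coefficient depends on $\alpha$, strict pathwise comparison is not automatic and a Yamada--Watanabe style argument is required, while the crucial uniform moment bound hinges on the integrability of $e^{\beta u/4}$ on $(-\infty,0]$, without which the limit $\nu\to -\infty$ would fail to be tight.
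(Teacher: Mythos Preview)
The paper does not actually prove this proposition: it is stated as a summary of known facts with the sentence ``see \cite{LV_HP} for additional details and proofs'' and no argument is given. So your sketch is already more than the paper provides, and the overall architecture you describe --- Lipschitz coefficients for existence/uniqueness, differentiation in $\lambda$ for monotonicity/analyticity, a comparison argument for monotonicity in $\nu$, and a time shift for scale invariance --- is the right one and matches what one finds in \cite{LV_HP} and its predecessors.

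There is, however, a genuine soft spot in your part (2), namely the uniform $L^2$ bound. Applying It\^o to $\alpha^2$ gives
\[
d\,E[\alpha^2]\;=\;E\Big[2\alpha\cdot\lambda\tfrac{\beta}{4}e^{\beta u/4}-2\Re(\delta)\,\alpha\sin\alpha-2\Im(\delta)\,\alpha(1-\cos\alpha)+2(1-\cos\alpha)\Big]\,du,
\]
and the linearization at $0$ produces the coefficient $1-2\Re(\delta)$ in front of $E[\alpha^2]$, not $-\Re(\delta)$. Your Gronwall estimate then yields a bound of the form $\int_\nu^u e^{(1-2\Re\delta+\varepsilon)(u-s)}e^{\beta s/2}\,ds$, which is uniformly bounded as $\nu\to-\infty$ only when $\beta/2>1-2\Re(\delta)$, i.e.\ $\beta+4\Re(\delta)>2$. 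For small $\beta$ (and in particular for the case $\delta=\beta/2$ with $\beta\le 2/3$ used later in the paper) this fails, so the argument as written does not close across the full range $\beta>0$, $\Re\delta>-1/2$. The actual construction in \cite{LV_HP} avoids this by building $\alpha_\lambda$ from the random Dirac operator / transfer matrix picture (where finiteness is automatic) rather than via a bare Gronwall bound; alternatively one can exploit the fact that the diffusion coefficient vanishes at integer multiples of $2\pi$ to control upcrossings. A related point: ``analyticity passes through the monotone a.s.\ limit'' is not automatic --- you need local uniform bounds on the complex-$\lambda$ extension to invoke Montel, and those bounds again hinge on the same uniform estimate.

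The comparison step is fine but simpler than you indicate: since both $\alpha_{\lambda,\nu_1}$ and $\alpha_{\lambda,\nu_2}$ solve the \emph{same} SDE with the same noise, pathwise uniqueness alone prevents crossing; no Yamada--Watanabe machinery is needed. Your scale-invariance argument is correct.
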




\subsection{Palm measures}

The Palm measure of a stationary point process $\Xi$ on $\R$ is the process viewed from a `typical' point, with that point shifted to 0. The reduced Palm measure is the resulting process with 0 removed. 
For a precise definition, see Section 13 of \cite{DVJ2}  or \cite{BVBV_Palm}. 
Section 13 of \cite{DVJ2}  shows that for a stationary process $\Xi$ with a finite pair correlation function $\rho_{\Xi}^{(2)}(0,x)$, the density  of the (reduced) Palm measure exists and it is given by 
\[\rho_{Palm}^{(1)}(\lambda)=\frac{\rho_{\Xi}^{(2)}(0,x)}{\rho_{\Xi}^{(1)}(x)}.
\]

\begin{proposition}\label{prop:Palm}
Let $\beta>0$.   The reduced Palm measure of $\Sineb$ is given by $\HPbb$.
\end{proposition}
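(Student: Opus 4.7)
The plan is to identify the reduced Palm measure of the finite circular beta-ensemble as a circular Jacobi beta-ensemble, and then to pass to the bulk scaling limit using the convergence of Palm measures from \cite{BVBV_Palm}.

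First I would compute the reduced Palm measure of $\textup{C}_{n,\beta}$ at angle $0$. Since the one-point intensity of $\textup{C}_{n,\beta}$ is the constant $n/(2\pi)$, the Campbell--Mecke formula (equivalently, the standard disintegration formula for an exchangeable finite point process with a continuous joint density) shows that the Palm density of the remaining $n-1$ angles is proportional to the joint density of $\textup{C}_{n,\beta}$ evaluated with one coordinate set equal to $0$, namely
$$\prod_{1 \le j < k \le n-1} |e^{i\theta_j} - e^{i\theta_k}|^\beta \prod_{k=1}^{n-1} |1-e^{i\theta_k}|^\beta.$$
Using the identity $|1-e^{i\theta}|^\beta = (1-e^{-i\theta})^{\beta/2}(1-e^{i\theta})^{\beta/2}$ and comparing with \eqref{eq:PDF_cjacobi}, this is precisely the joint density of $\textup{CJ}_{n-1,\beta,\beta/2}$. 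So the reduced Palm measure of $\textup{C}_{n,\beta}$ at $0$ is $\textup{CJ}_{n-1,\beta,\beta/2}$.

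Next I would take the bulk scaling limit. Under the map $\theta\mapsto n\theta$, the scaled circular ensemble $n\,\textup{C}_{n,\beta}$ converges in distribution to $\Sineb$ \cite{KS,BVBV_sbo}, while the scaled circular Jacobi ensemble $n\,\textup{CJ}_{n-1,\beta,\beta/2}$ converges to $\HPbb$ \cite{LV_HP}. Because Palm conditioning commutes with deterministic linear rescalings that fix the conditioning point, the reduced Palm measure at $0$ of $n\,\textup{C}_{n,\beta}$ equals $n\,\textup{CJ}_{n-1,\beta,\beta/2}$. Combining this with a convergence theorem from \cite{BVBV_Palm} ensuring that the reduced Palm measures of the scaled circular beta-ensembles at $0$ converge to the reduced Palm measure of $\Sineb$ at $0$ yields the proposition. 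Equation \eqref{eq:palm} then follows immediately from the constant intensity $\rho_\beta^{(1)}\equiv 1/(2\pi)$ of $\Sineb$ and the general Palm-density formula recalled just before the proposition.

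The main obstacle is the interchange of Palm conditioning and vague convergence. General weak (or vague) convergence of point processes does not automatically imply convergence of reduced Palm measures; one needs uniform control on the local intensity near the conditioning point together with tightness of the conditioned processes. The small-interval tail bound on $\Sineb[0,\eps]$ from \cite{BVBV} (already invoked in the excerpt to establish the existence of the correlation functions), combined with the convergence framework of \cite{BVBV_Palm} which is tailored to limits of beta-ensembles, should supply precisely these inputs.
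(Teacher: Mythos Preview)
Your proposal is correct and follows essentially the same route as the paper: identify the reduced Palm measure of the finite circular ensemble as the circular Jacobi ensemble with $\delta=\beta/2$, then pass to the scaling limit by invoking the Palm-convergence result of \cite{BVBV_Palm} together with the identification of the limit from \cite{LV_HP}. The paper simply pins down the specific citations (Proposition~6 and Lemma~4 of \cite{BVBV_Palm}) that supply the finite-level computation and the interchange of Palm conditioning with the point-process limit, which is exactly the obstacle you flagged.
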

\begin{proof}
The proof of this statement is basically contained in \cite{BVBV_Palm}, although it is not stated there explicitly.

Note that we can readily condition a size $n+1$ circular beta-ensemble to have a point at angle 0: we just have to set one of the angles to be 0 in the joint density function, and renormalize it. The resulting joint density is exactly $\textup{CJ}_{n,\beta,\beta/2}$ (see \eqref{eq:PDF_cjacobi}).
This shows that the Palm measure of $\textup{C}_{n+1,\beta}$ is $\textup{CJ}_{n,\beta,\beta/2}$, see Proposition 6 of \cite{BVBV_Palm} for a detailed proof and additional discussions. 
Lemma 4 of \cite{BVBV_Palm}  shows that the Palm measure of  $\Sineb$ can be obtained as the point process limit of $\textup{CJ}_{n,\beta,\beta/2}$ scaled with $n$ with an extra point added at 0. From \cite{LV_HP}  we know that the limit of this process (without the point at zero) is  $\HPbb$, which identifies $\HPbb$ as the reduced Palm measure of $\Sineb$.
\end{proof}

Proposition \ref{prop:Palm} together with the discussion before it leads to the following statement.
\begin{proposition}\label{prop:sine_HP}
    Fix $\beta>0$ and let $\rho_{\beta,\beta/2}^{(1)}$ be the density (one-point function) of the  $\HPbb$ process. Let $\rho_{\beta}^{(2)}$ denote the pair correlation (two-point function) of the  $\Sineb$ process. Then
    \begin{align}
        \rho_{\beta}^{(2)}(0,\lambda)=\frac{1}{2\pi} \rho_{\beta,\beta/2}^{(1)}(\lambda).\label{eq:sine_HP}
    \end{align}
\end{proposition}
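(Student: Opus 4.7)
The plan is to combine Proposition \ref{prop:Palm} with the general formula relating reduced Palm densities to pair correlations recalled just before its statement. Concretely, the argument has three short steps.

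First, I would invoke the existence of $\rho_\beta^{(2)}$ guaranteed by the exponential tail bound on $\Sineb[0,\eps]$ from Proposition 9 of \cite{BVBV}, as recalled at the end of Section \ref{sec:background}. This ensures the pair correlation is finite and the hypothesis for the Palm density formula from Section 13 of \cite{DVJ2} is satisfied. Since $\Sineb$ is translation invariant with density $\rho_\beta^{(1)}(\lambda) \equiv \tfrac{1}{2\pi}$, the general formula gives
\begin{align*}
\rho_{Palm}^{(1)}(\lambda) = \frac{\rho_\beta^{(2)}(0,\lambda)}{\rho_\beta^{(1)}(\lambda)} = 2\pi \, \rho_\beta^{(2)}(0,\lambda),
\end{align*}
where $\rho_{Palm}^{(1)}$ is the one-point function of the reduced Palm measure of $\Sineb$.

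Second, I would identify this reduced Palm measure with $\HPbb$ via Proposition \ref{prop:Palm}. This gives $\rho_{Palm}^{(1)}(\lambda) = \rho_{\beta,\beta/2}^{(1)}(\lambda)$. Rearranging yields the desired identity \eqref{eq:sine_HP}.

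In this particular proposition there is no serious obstacle: all the technical work — conditioning the circular beta-ensemble, passing to the limit, matching it with $\HPbb$ — has already been carried out in Proposition \ref{prop:Palm}. The only point one might want to spell out is that $\rho_{\beta,\beta/2}^{(1)}$ genuinely is the density of the reduced Palm measure in the Radon–Nikodym sense (and not merely the intensity of some measure equal in distribution to $\HPbb$), but this is automatic from the construction of $\HPbb$ as a stationary-in-law limit together with the definition of the one-point function as $d\nu/d\lambda$.
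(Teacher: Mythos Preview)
Your proposal is correct and matches the paper's approach exactly: the paper simply states that Proposition~\ref{prop:sine_HP} follows from Proposition~\ref{prop:Palm} together with the Palm density formula $\rho_{Palm}^{(1)}(\lambda)=\rho_\Xi^{(2)}(0,\lambda)/\rho_\Xi^{(1)}(\lambda)$ recalled just before it, and you have spelled out precisely this. One small slip in your closing remark: $\HPbb$ is not stationary (it is, after all, the process conditioned on a point at $0$), so drop the phrase ``stationary-in-law''---the existence of its one-point function follows instead from its construction as a point process limit with locally finite intensity.
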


\section{The density of $\HPb$ for $\delta>0$}
\label{sec:1point}

The main result of this section is the following theorem. 

\begin{theorem}[One-point function of $\HPb$]\label{thm:1p_HP}
 Let $\delta>0$ and $\beta>0$. 
Consider the strong solution $\alpha_\lambda(u)$ of \eqref{eq:alphaSDE} with initial condition \eqref{eq:SDE_initial}. Then the one-point function of the $\HPb$ process is given by 
    \begin{align}\label{eq:rho1_sum}
        \rho^{(1)}_{\beta,\delta}(\lambda)=\frac{1}{2\pi}+\frac{1}{\pi}\sum_{k=1}^\infty  \frac{(-\delta)^{\uparrow k}}{(1+\delta)^{\uparrow k}} E[\cos(k \alpha_\lambda(0)]. 
    \end{align}
\end{theorem}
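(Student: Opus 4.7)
The plan is to prove $\rho^{(1)}_{\beta,\delta}(\lambda)=E[\tilde p(\alpha_\lambda(0))]$, where $\tilde p$ is the $2\pi$-periodic extension of the density $p(\theta)$ of $\Theta$ from \eqref{eq:Theta}; the theorem then follows by Fourier expansion of $\tilde p$. For real $\delta>0$ the density takes the form $p(\theta)\propto (1-\cos\theta)^\delta$, and a Chu--Vandermonde evaluation of ${}_2F_1(-\delta,-\delta+k;k+1;1)$ applied to the product of the power series of $(1-e^{\pm i\theta})^\delta$ yields
\begin{align*}
\tilde p(y)=\frac{1}{2\pi}+\frac{1}{\pi}\sum_{k=1}^\infty\frac{(-\delta)^{\uparrow k}}{(1+\delta)^{\uparrow k}}\cos(ky).
\end{align*}
The algebraic fact driving the proof is the elementary identity $(1-\cos\alpha)\tilde p'(\alpha)=\delta\sin\alpha\,\tilde p(\alpha)$, which says that the time-independent part $-\delta\sin\alpha\,\partial_\alpha+(1-\cos\alpha)\partial_\alpha^2$ of the generator of the $\alpha_\lambda$-diffusion annihilates the antiderivative $\tilde F(y):=\int_0^y\tilde p(s)\,ds$.

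First I would set up a counting representation. Combining the characterization \eqref{eq:HP} with the monotonicity of $\mu\mapsto\alpha_\mu(0)$ from Proposition \ref{prop:alphaSDE} gives $\#(\HPb\cap(0,\lambda])=\#\{k\geq 0:\Theta+2\pi k\leq\alpha_\lambda(0)\}$ for $\lambda>0$, and averaging over $\Theta$ (independent of $\alpha$) yields
\begin{align*}
\int_0^\lambda\rho^{(1)}_{\beta,\delta}(t)\,dt=E[\#(\HPb\cap(0,\lambda])]=E[\tilde F(\alpha_\lambda(0))].
\end{align*}
Next, I apply It\^o's formula to $\tilde F(\alpha_\lambda(u))$ using \eqref{eq:alphaSDE}: the drift decomposes as $\lambda\tfrac{\beta}{4}e^{\beta u/4}\tilde p(\alpha_\lambda(u))+[-\delta\sin\alpha_\lambda\,\tilde p(\alpha_\lambda)+(1-\cos\alpha_\lambda)\tilde p'(\alpha_\lambda)]$, and the bracketed expression vanishes by the key identity. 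Taking expectation, integrating from $-\infty$ to $0$, and using $\tilde F(0)=0$ together with $\alpha_\lambda(u)\to 0$ as $u\to-\infty$, I obtain $E[\tilde F(\alpha_\lambda(0))]=\int_{-\infty}^0\lambda\tfrac{\beta}{4}e^{\beta u/4}E[\tilde p(\alpha_\lambda(u))]\,du$. Applying the scale invariance \eqref{eq:scale_inv} to replace $\alpha_\lambda(u)\ed\alpha_{\lambda e^{\beta u/4}}(0)$ and substituting $t=\lambda e^{\beta u/4}$ turns this into $\int_0^\lambda E[\tilde p(\alpha_t(0))]\,dt$. Differentiating in $\lambda$ gives the desired $\rho^{(1)}_{\beta,\delta}(\lambda)=E[\tilde p(\alpha_\lambda(0))]$ on $(0,\infty)$, and the reflection symmetry $\alpha_{-\lambda}\ed -\alpha_\lambda$ of the SDE extends this to all $\lambda\in\R$.

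The main obstacle is the rigorous execution of the It\^o step over the unbounded interval $(-\infty,0]$. One must justify that the stochastic integral produced by It\^o's formula is a true martingale with vanishing expectation, via a BDG bound combined with the estimate $\tilde p(\alpha)^2(2-2\cos\alpha)\lesssim\alpha^{4\delta+2}$ near $\alpha=0$ and the exponential decay $\alpha_\lambda(u)\sim\lambda e^{\beta u/4}$ as $u\to-\infty$; that the boundary expectation $E[\tilde F(\alpha_\lambda(u))]$ vanishes in the same limit; and that It\^o's formula applies to $\tilde F$, which is only $C^1$ for $\delta\in(0,\tfrac12]$. The last point is handled by smooth approximation, exploiting that the a priori singular combination $(1-\cos\alpha)\tilde p'(\alpha)=\delta\sin\alpha\,\tilde p(\alpha)$ is itself globally smooth.
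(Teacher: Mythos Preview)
Your proposal is correct and is essentially the position-space version of the paper's Fourier-space argument. The paper applies It\^o's formula to each $e^{ik\alpha_\lambda}$ separately (these are smooth test functions), sums the resulting drift expressions with the weights $\frac{(-\delta)^{\uparrow k}}{k(1+\delta)^{\uparrow k}}$, and then uses a partial-summation identity (Claim~\ref{cl:partial_sum}) to show that the contribution of the time-independent part of the generator telescopes to a boundary term that vanishes as $m\to\infty$. Your observation that $(1-\cos\alpha)\tilde p'(\alpha)=\delta\sin\alpha\,\tilde p(\alpha)$ is exactly the resummed form of that telescoping identity, so applying It\^o directly to $\tilde F$ collapses the paper's partial-sum computation into a single step. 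The paper also treats the endpoint $u\to-\infty$ slightly differently: rather than integrating on $(-\infty,0]$ and bounding the tail, it works on $[\nu,0]$, uses scale invariance to identify $\alpha_\lambda(\nu)\stackrel{d}{=}\alpha_{\lambda e^{\beta\nu/4}}(0)$, and interprets the $\nu\to-\infty$ limit as an $\eps\to0$ limit on the $\lambda$-side.

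What each approach buys: the paper's mode-by-mode route avoids the $C^2$ issue you flag for $\delta\le\tfrac12$, since $e^{ik\alpha}$ is always smooth and the truncation error is controlled by the coefficient bound \eqref{eq:coeff_bnd}. Your route is shorter and makes the mechanism transparent---the density of $\Theta$ is an invariant density for the $\lambda=0$ diffusion---but you pay for that with the smoothing argument, which as you note is rescued by the fact that the singular combination $(1-\cos\alpha)\tilde p'(\alpha)$ is itself smooth. Both are valid; yours would be the more natural write-up if you are willing to spell out the approximation.
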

By the bound \eqref{eq:coeff_bnd} the sum on the right side of \eqref{eq:rho1_sum} is absolutely convergent.

\begin{proof}[Proof of Theorem \ref{thm:sineb_corr}]
Using Theorem \ref{thm:1p_HP} with $\delta=\beta/2$ together with the identity \eqref{eq:sine_HP} of Proposition \ref{prop:sine_HP} yields the statement of Theorem \ref{thm:sineb_corr}.
\end{proof}

In order to  prove Theorem \ref{thm:1p_HP} we  first average out  $\Theta$ in the characterization \eqref{eq:HP} of the $\HPb$ process.

\begin{lemma}\label{lem:exp_count}
Suppose that $g : \R\mapsto \R$ is a continuous, strictly increasing deterministic function with $g(0)=0$. Fix $\delta>0$, and let  $\Theta\in[0,2\pi)$ be a random variable with density  \eqref{eq:Theta}. Consider the process
\[
\Xi=\{\lambda\in \R: g(\lambda)=\Theta \mod 2\pi\}.
\]
Then for $\lambda>0$ the expected number of points of $\Xi$ in $[0,\lambda]$ is given by 
\begin{align}\label{eq:Xi_counting}
E[\Xi[0,\lambda]]&=\frac{g(\lambda)}{2\pi}+\frac{1}{\pi}\sum_{k=1}^\infty  \frac{(-\delta)^{\uparrow k}}{(1+ \delta)^{\uparrow k}}\frac{\sin( k g(\lambda))}{k}.
\end{align}
\end{lemma}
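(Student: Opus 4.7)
The plan is to translate the random count of points in $[0,\lambda]$ into a deterministic function of $g(\lambda)$, and then Fourier-expand that function using the explicit density of $\Theta$. Since $g$ is continuous and strictly increasing with $g(0)=0$, the equation $g(\mu)\equiv\Theta\pmod{2\pi}$ with $\mu\in[0,\lambda]$ has one solution for each nonnegative integer $m$ satisfying $\Theta+2\pi m\le g(\lambda)$. Writing $g(\lambda)=2\pi K+r$ with $K\in\ZZ_{\ge 0}$ and $r\in[0,2\pi)$, we get $\Xi[0,\lambda]=K+\mathbf{1}[\Theta\le r]$ almost surely, hence
\begin{align*}
E[\Xi[0,\lambda]]=\frac{g(\lambda)}{2\pi}+\phi(g(\lambda)),\qquad \phi(t):=F_\Theta(r(t))-\tfrac{r(t)}{2\pi},
\end{align*}
where $r(t)=t\bmod 2\pi$ and $F_\Theta$ is the CDF of $\Theta$. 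Since $\delta>0$ makes $F_\Theta$ continuous with $F_\Theta(2\pi^-)=1$, the function $\phi$ is continuous, $2\pi$-periodic, and vanishes at $0$.

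Next I would Fourier-expand $\phi$. For real $\delta>0$, the density $p$ of $\Theta$ is symmetric about $\pi$, which forces $E[\sin(k\Theta)]=0$ for all $k\ge 1$ and makes the Fourier series of $p$ (hence of $\phi'$) purely cosine, with coefficients $\tfrac{1}{\pi}E[\cos(k\Theta)]$. Termwise integration using $\phi(0)=0$, justified by the absolute convergence guaranteed by the decay bound \eqref{eq:coeff_bnd} once the Fourier coefficients are identified, yields
\begin{align*}
\phi(t)=\frac{1}{\pi}\sum_{k=1}^{\infty}\frac{E[\cos(k\Theta)]}{k}\sin(kt).
\end{align*}

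The remaining task is to compute $E[\cos(k\Theta)]=E[e^{ik\Theta}]$ (real by symmetry) and identify it with $\frac{(-\delta)^{\uparrow k}}{(1+\delta)^{\uparrow k}}$. Using $|1-e^{i\theta}|^{2\delta}=(2\sin(\theta/2))^{2\delta}$ and substituting $\theta=2\varphi$ reduces the Fourier coefficient to a multiple of the classical integral $\int_0^\pi e^{2ik\varphi}\sin^{2\delta}\varphi\,d\varphi$, which evaluates in terms of Gamma functions; rewriting the Gamma ratios as rising factorials gives the desired formula, and substituting into $\phi$ produces \eqref{eq:Xi_counting}. I expect this Gamma-function evaluation to be the most delicate step, since branch choices for $(1-e^{\pm i\theta})^\delta$ would complicate a direct contour approach; passing through the real integral of $\sin^{2\delta}\varphi$ sidesteps this issue. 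The rest is routine bookkeeping.
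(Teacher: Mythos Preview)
Your argument is correct and follows the same overall architecture as the paper: reduce the count $\Xi[0,\lambda]$ to $\lfloor g(\lambda)/2\pi\rfloor$ plus the CDF of $\Theta$ evaluated at the fractional part, then Fourier-expand the density $h_\delta$ and integrate term by term. The paper writes this slightly differently, showing directly that $\int_0^x h_\delta(\theta)\,d\theta=\tfrac{x}{2\pi}+\tfrac{1}{\pi}\sum_{k\ge1}\tfrac{(-\delta)^{\uparrow k}}{(1+\delta)^{\uparrow k}}\tfrac{\sin(kx)}{k}$, but this is the same content as your $\phi$.

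The one substantive difference is how the Fourier coefficients are computed. You pass to the real form $|1-e^{i\theta}|^{2\delta}=(2\sin(\theta/2))^{2\delta}$, substitute $\theta=2\varphi$, and invoke the classical integral $\int_0^\pi e^{2ik\varphi}\sin^{2\delta}\varphi\,d\varphi$ evaluated via Beta/Gamma functions. The paper instead multiplies the two binomial series $(1-e^{\pm i\theta})^\delta=\sum_m\tfrac{(-\delta)^{\uparrow m}}{m!}e^{\pm im\theta}$, collects the coefficient of $e^{ik\theta}$ as a single sum, recognizes it as a terminating ${}_2F_1(-\delta+k,-\delta;k+1;1)$, and applies Gauss's summation formula. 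Both routes are clean; the paper's has the minor advantage that it never needs to identify $(1-e^{-i\theta})^\delta(1-e^{i\theta})^\delta$ with $|1-e^{i\theta}|^{2\delta}$ (so no branch bookkeeping at all) and would extend verbatim to complex $\delta$, while yours is perhaps more recognizable as a tabulated integral. The symmetry observation $h_\delta(\theta)=h_\delta(2\pi-\theta)$ that you use to kill the sine coefficients is a nice shortcut the paper does not make explicit.
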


\begin{proof}
Since $\lambda>0$, $g(0)=0$, and $g$ is continuous and strictly increasing, we have
\[
\Xi[0,\lambda]=\big|\{k\in \Z_{\ge 0}: \Theta+2k\pi\le g(\lambda) \}\big|=\lfloor\tfrac{g(\lambda)}{2\pi} \rfloor+\ind \left(g(\lambda)-2\pi \lfloor\tfrac{g(\lambda)}{2\pi}\rfloor\ge \Theta \right),
\]
and
\[
E[\Xi[0,\lambda]]=\lfloor\tfrac{g(\lambda)}{2\pi} \rfloor+P\left(g(\lambda)-2\pi \lfloor\tfrac{g(\lambda)}{2\pi}\rfloor\ge \Theta \right).
\]
Denote the probability density function of $\Theta$ by $h_\delta(\theta)$, the lemma will follow if we show that  for $0\le \theta<2\pi$ we have
\begin{align} \label{eq:CDF_h}
    \int_0^x  h_\delta(\theta) d\theta=\frac{x}{2\pi}+\frac1{\pi} \sum_{k=1}^\infty  \frac{(-\delta)^{\uparrow k}}{(1+\delta)^{\uparrow k}}\cdot \frac{\sin(k x)}{k}. 
\end{align}
The function $h_\delta:[0,2\pi)\to \R_+$ is bounded and real, hence it has a Fourier expansion 
\[
h_\delta(\theta)=\sum_{k\in \Z} a_k e^{i k \theta}
\]
with $a_{-k}=\bar a_k$. We will show that 
\[
a_k=\frac{1}{2\pi} \frac{(-\delta)^{\uparrow k}}{(1+\delta)^{\uparrow k}}, \qquad \text{for $k\ge 0$,}
\]
from this \eqref{eq:CDF_h} follows directly by integration. 
We have 
\begin{align*}
    (1-z)^a=\sum_{k=0}^\infty \frac{(-a)^{\uparrow k}}{k!} z^j,
\end{align*}
where this series is absolutely convergent for $|z|=1$ if $ a>0$. Since $\delta>0$, we have
\begin{align*}
     (1-e^{i \theta})^{\delta}(1-e^{-i \theta})^{ \delta}&= \sum_{m, n\ge 0}
\frac{(- \delta)^m (-\delta)^n}{m! n!} e^{i \theta (m-n)},
\end{align*}
where the sum is also absolutely convergent. For $k\ge 0$ the coefficient of $e^{i k \theta}$ is given by
\begin{align*}
    \sum_{n=0}^\infty \frac{(-\delta)^{\uparrow n+k} (-\delta)^{\uparrow n}}{(n+k)! n!}&=
    (-\bar \delta)^{\uparrow k}\sum_{n=0}^\infty  \frac{(- \delta+k)^{\uparrow n} (-\delta)^{\uparrow n}}{(n+k)! n!}\\
    &=(-\delta)^{\uparrow k} \frac{1}{k!} { }_2F_1(-\delta+k, -\delta, k+1,1)\\
    &=(- \delta)^{\uparrow k}  \frac{ \Gamma(1+2\delta)}{\Gamma(1+ \delta)\Gamma(1+\delta+k)},
\end{align*}
where the last two equations follow from the definition of the $_2F_1$ hypergeometric function and the Gauss formula \cite{andrews1999special}. From this it follows that for $k\ge 0$ the Fourier coefficient $a_k$ of the function $h_\delta$ is 
\begin{align*}
    a_k=\frac{1}{2\pi} \frac{\Gamma(1+\delta)^2}{\Gamma(1+2\delta)} \cdot  (-\delta)^{\uparrow k}  \frac{ \Gamma(1+2\delta)}{\Gamma(1+ \delta)\Gamma(1+\delta+k)}=\frac{1}{2\pi}\frac{ (- \delta)^{\uparrow k} }{(1+\delta)^{\uparrow k}},
\end{align*}
which is what we wanted to prove.
\end{proof}

\begin{definition}\label{def:Gd}
We define the operator $\cG_\delta:\R^{\ZZ_{+}}\to \R^{\ZZ_{+}}$ acting on sequences $(a_1,a_2,\dots)$ as 
\begin{align}\label{def:G_delta}
    \left(\cG_\delta(a)\right)_k=\frac{k(k+\delta)}{2}a_{k-1}-k^2 a_k+\frac{k(k-\delta)}{2} a_{k+1}, \qquad k\ge 1,
\end{align}
where we set $a_0$ to be 1. 
\end{definition}

The next claim follows directly from the definition of the Pochhammer symbol and partial summation. 
\begin{claim} \label{cl:partial_sum} For any integer $m\ge 1$ we have
    \begin{align}
          \sum_{k=1}^m \frac{(-\delta)^{\uparrow k}}{k (1+\delta)^{\uparrow k}}\left(\cG_\delta (a)\right)_k=\frac{\delta}{2}(a_1-1)-\frac{1}{2}(m-\delta) \frac{(-\delta)^{\uparrow m}}{ (1+\delta)^{\uparrow m}}(a_m-a_{m+1}).
    \end{align}
\end{claim}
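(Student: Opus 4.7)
The plan is to run a direct Abel-type summation by exploiting the Pochhammer recursion to shift indices. Set
\[
d_k=\frac{(-\delta)^{\uparrow k}}{(1+\delta)^{\uparrow k}},\qquad k\ge 0,
\]
so that the coefficient in the claim is $d_k/k$. Since every term in $(\cG_\delta(a))_k$ carries a common factor of $k$, the sum to be evaluated simplifies to
\[
S_m:=\sum_{k=1}^m d_k\Bigl[\tfrac{k+\delta}{2}\,a_{k-1}-k\,a_k+\tfrac{k-\delta}{2}\,a_{k+1}\Bigr].
\]
First I would record the two one-step shift identities that are immediate from the definition of the Pochhammer symbol,
\[
d_k(k+\delta)=d_{k-1}(k-1-\delta),\qquad d_k(k-\delta)=d_{k+1}(k+1+\delta),
\]
which are the engine of the computation.

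Next I would apply these identities to reindex the two outer sums. The $a_{k-1}$ part becomes $\tfrac12\sum_{j=0}^{m-1}(j-\delta)\,d_j a_j$ and the $a_{k+1}$ part becomes $\tfrac12\sum_{j=2}^{m+1}(j+\delta)\,d_j a_j$, where the $j=0$ contribution is computed using $d_0=1,\,a_0=1$ and yields exactly $-\delta/2$. For indices $j=2,\dots,m-1$ the two reindexed sums combine to $\tfrac12[(j-\delta)+(j+\delta)]\,d_j a_j=j\,d_j a_j$, which cancels against the corresponding bulk terms of $-\sum_{k=1}^m k\,d_k a_k$.

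The remaining boundary contributions live at $j=0,1,m,m+1$. Collecting them gives
\[
S_m=-\tfrac{\delta}{2}-\tfrac{1+\delta}{2}d_1 a_1-\tfrac{m-\delta}{2}d_m a_m+\tfrac{m+1+\delta}{2}d_{m+1}a_{m+1}.
\]
To finish, I would use the elementary identities $(1+\delta)d_1=-\delta$ and $(m+1+\delta)d_{m+1}=(m-\delta)d_m$, which are immediate from the same Pochhammer recursion. Substituting these turns the first boundary term into $\tfrac{\delta}{2}a_1$ and pairs the last two boundary terms into $-\tfrac{m-\delta}{2}d_m(a_m-a_{m+1})$, producing exactly the right-hand side of the claim.

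The computation is purely algebraic and contains no analytic obstacle; the only point requiring care is the bookkeeping of the four boundary terms ($j=0,1,m,m+1$) after the bulk cancellation, and in particular ensuring that the two shift identities are used with the correct sign and range.
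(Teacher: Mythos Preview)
Your proposal is correct and follows exactly the approach the paper indicates: the paper does not spell out a proof but simply states that the claim ``follows directly from the definition of the Pochhammer symbol and partial summation,'' which is precisely the Abel-type reindexing via the two shift identities $d_k(k+\delta)=d_{k-1}(k-1-\delta)$ and $d_k(k-\delta)=d_{k+1}(k+1+\delta)$ that you carry out. Your boundary bookkeeping is accurate (including the degenerate case $m=1$, where the $j=1$ and $j=m$ contributions coincide and still combine correctly), so there is nothing to add.
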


Consider the solution $\alpha_\lambda$ of the SDE \eqref{eq:alphaSDE}. It\^{o}'s formula shows that  the evolution of $e^{i k \alpha_\lambda(u)}$ for $\delta>0$, $k\ge 1$ is given by :  
\begin{align}\notag
 d e^{i k\alpha_\lambda(u)}&=\frac{k(k+\delta)}{2} e^{i (k-1)\alpha_\lambda(u)} du+(i k \lambda \tfrac{\beta}{4}e^{\frac{\beta}{4}u}-k^2) e^{i k \alpha_\lambda(u)} du+\frac{k(k-\delta)}{2} e^{i (k+1)\alpha_\lambda(u)}du  \\
 &\qquad -\frac{1}{2} i  k (e^{i \alpha_\lambda(u)}-1) e^{i(k-1)\alpha_\lambda(u)} dZ+\frac{1}{2} i  k (e^{i \alpha_\lambda(u)}-1) e^{i k \alpha_\lambda(u)} d\bar Z. \label{eq:QkSDE}
\end{align}
Note that the drift term in this SDE can be written as 
\[
 \left(\cG_\delta(e^{i j \alpha_\lambda(u)})\right)_k+i k \lambda \tfrac{\beta}{4}e^{\frac{\beta}{4}u} e^{i k \alpha_\lambda(u)},
\]
where the first term is the $k$th coordinate of the image of the operator $\cG_\delta$ acting on the sequence $e^{i j \alpha_\lambda(u)}, j\ge 1$.

We also record the fact that when  $\delta>0$ the SDE \eqref{eq:alphaSDE} becomes
\begin{align}
    \label{eq:alphaSDE11}
	d\alpha_\lambda = \lambda\tfrac{\beta}{4}e^{\frac{\beta}{4}u} du -\delta \sin(\alpha_\lambda(u) du+\Re[(e^{-i\alpha_\lambda(u)}-1)dZ].
\end{align}

\begin{proof}[Proof of Theorem \ref{thm:1p_HP}]
It is sufficient to prove the statement for $\lambda>0$, the other case follows by symmetry.  Recall the construction of $\HPb$ from \eqref{eq:HP} and the properties of $\alpha_\lambda$ from Proposition \ref{prop:alphaSDE}.
Since the process $\alpha_\lambda$ is independent of $\Theta$, and $\lambda\mapsto \alpha_\lambda(0)$ satisfies the  conditions required for $g(\lambda)$ in Lemma \ref{lem:exp_count}, 
we obtain
\begin{align}
  \pi  E[\HPb[0,\lambda]]&=\frac12 E[\alpha_\lambda(0)]+\sum_{k=1}^\infty \frac{(-\delta)^{\uparrow k}}{k (1+\delta)^{\uparrow k}}  \Im E[e^{i k \alpha_\lambda(0)}].
\end{align}
The sum on the right is absolutely convergent by the bound \eqref{eq:coeff_bnd}.

By Proposition \ref{prop:alphaSDE} the function $\lambda\mapsto \alpha_\lambda(0)$ is monotone in $\lambda$, hence by the monotone convergence theorem 
\begin{align*}
  \pi   E[\HPb[0,\lambda]]&=\lim_{\eps\to 0^+} \pi E[\HPb[0,\lambda]-\HPb[0,\eps  ]]\\
  &=\lim_{\eps\to 0^+} \frac12 E[\alpha_\lambda(0)-\alpha_{\eps}(0)]+\sum_{k=1}^\infty \frac{(-\delta)^{\uparrow k}}{k (1+\delta)^{\uparrow k}} \Im E[e^{i k \alpha_\lambda(0)}-e^{i k \alpha_{\eps}(0)}].
\end{align*}
Choose $\eps=\lambda e^{\beta \nu/4}$ with $\nu<0$. Then by \eqref{eq:scale_inv} of Proposition \ref{prop:alphaSDE} we have $\alpha_{\eps}(0)\ed \alpha_\lambda(\nu)$, and hence
\begin{align*}
& \frac12 E[\alpha_\lambda(0)-\alpha_{\eps}(0)]+\sum_{k=1}^\infty \frac{(-\delta)^{\uparrow k}}{k (1+\delta)^{\uparrow k}}\Im E[e^{i k \alpha_\lambda(0)}-e^{i k \alpha_{\eps}(0)}]\\
&\qquad\qquad \qquad =\frac12 E[\alpha_\lambda(0)-\alpha_{\lambda}(\nu)]+\sum_{k=1}^\infty \frac{(-\delta)^{\uparrow k}}{k (1+\delta)^{\uparrow k}}\Im E[e^{i k \alpha_\lambda(0)}-e^{i k \alpha_{\lambda}(\nu)}]\\
&\qquad \qquad\qquad =\frac12 E[\alpha_\lambda(0)-\alpha_{\lambda}(\nu)]+\lim_{m\to \infty}\sum_{k=1}^m \frac{(-\delta)^{\uparrow k}}{k (1+\delta)^{\uparrow k}}\Im E[e^{i k \alpha_\lambda(0)}-e^{i k \alpha_{\lambda}(\nu)}].
\end{align*}
For a fixed $\lambda>0$, $k\ge 1$, $\nu<0$ the stochastic differential equations  \eqref{eq:QkSDE} and \eqref{eq:alphaSDE11}  have uniformly bounded coefficients for $u\in [\nu,0]$, so we can express $E[e^{i k \alpha_\lambda(0)}-e^{i k \alpha_{\lambda}(\nu)}]$ and $E[\alpha_\lambda(0)-\alpha_\lambda(\nu)]$ as the integrated expected drifts on $[\nu,0]$ from the respective  equations.

Fixing $m\ge 1$, and using the notation \eqref{def:G_delta}, we get
\begin{align*}
    &\frac12 E[\alpha_\lambda(0)-\alpha_{\lambda}(\nu)]+\sum_{k=1}^m \frac{(-\delta)^{\uparrow k}}{k (1+\delta)^{\uparrow k}}\Im E[e^{i k \alpha_\lambda(0)}-e^{i k \alpha_{\lambda}(\nu)}]\\
    &\qquad =\frac{\lambda}{2}(1-e^{\beta \nu/4})-\frac{\delta}{2} \Im \int_{\nu}^0 E[e^{i \alpha_\lambda(u)}]du+\Im  \int_{\nu}^0 \sum_{k=1}^m \frac{(-\delta)^{\uparrow k}}{k (1+\delta)^{\uparrow k}} \left(\cG_\delta E[e^{i j \alpha_\lambda(u)}]\right)_k du\\
    &\qquad\qquad
    +\lambda \int_{\nu}^0 \sum_{k=1}^m \frac{\beta}{4}e^{\frac{\beta}{4}u} \frac{(-\delta)^{\uparrow k}}{ (1+\delta)^{\uparrow k}}  \Re[E[e^{i k \alpha_\lambda(u)}]] du.
\end{align*}
Using Claim \ref{cl:partial_sum} for the sequence $a_j=E[e^{i j \alpha_\lambda(u)}]$ we get for any $m\ge 1$,
\begin{align}\notag  &-\frac{\delta}{2} \Im E[e^{i \alpha_\lambda(u)}]+\Im   \sum_{k=1}^m  \frac{(-\delta)^{\uparrow k}}{k (1+\delta)^{\uparrow k}}\left(\cG_\delta E[e^{i j \alpha_\lambda(u)}]\right)_k\\&\notag \qquad  = -\frac{\delta}{2} \Im E[e^{i \alpha_\lambda(u)}]+\frac{\delta}{2}\Im[E[e^{i \alpha_\lambda(u)}-1]]-\frac12 (m-\delta)\frac{(-\delta)^{\uparrow m}}{ (1+\delta)^{\uparrow m}}\Im E[e^{i m \alpha_\lambda(u)}-e^{i (m+1) \alpha_\lambda(u)}]\\
   &\qquad  =-\frac12 (m-\delta)\frac{(-\delta)^{\uparrow m}}{ (1+\delta)^{\uparrow m}}\Im E[e^{i m \alpha_\lambda(u)}-e^{i (m+1) \alpha_\lambda(u)}].\label{eq:error}
\end{align}
Note that by the bound  \eqref{eq:coeff_bnd}, the term \eqref{eq:error} is uniformly bounded by $c m^{-2\delta}$, hence for a fixed $\nu<0$ this term will vanish when integrated on $[\nu,0]$ and then $m$ taken to $\infty$.  This leads to 
\begin{align*}
&   \frac12 E[\alpha_\lambda(0)-\alpha_{\lambda}(\nu)]+\lim_{m\to \infty}\sum_{k=1}^m \frac{(-\delta)^{\uparrow k}}{k (1+\delta)^{\uparrow k}}\Im E[e^{i k \alpha_\lambda(0)}-e^{i k \alpha_{\lambda}(\nu)}]\\
&\qquad\qquad =\frac{\lambda}{2}(1-e^{\beta \nu/4}) +\lambda \lim_{m\to \infty}\int_{\nu}^0 \sum_{k=1}^m \frac{\beta}{4}e^{\frac{\beta}{4}u} \frac{(-\delta)^{\uparrow k}}{ (1+\delta)^{\uparrow k}}  \Re[E[e^{i k \alpha_\lambda(u)}]] du\\
&\qquad\qquad =\frac{\lambda}{2}(1-e^{\beta \nu/4}) + \int_\nu^0 \sum_{k=1}^\infty  \frac{\beta}{4}e^{\frac{\beta}{4}u} \frac{(-\delta)^{\uparrow k}}{ (1+\delta)^{\uparrow k}}  E[\cos( k \alpha_\lambda(u))]] du\\
&\qquad\qquad =\frac{\lambda}{2}(1-e^{\beta \nu/4}) + \int_\eps^\lambda \sum_{k=1}^\infty  \frac{\beta}{4}e^{\frac{\beta}{4}u} \frac{(-\delta)^{\uparrow k}}{ (1+\delta)^{\uparrow k}}  E[\cos( k \alpha_x(0))]] dx.
\end{align*}
The  bound \eqref{eq:coeff_bnd} shows that the series is absolutely convergent, while $\eps=\lambda\frac{\beta}{4}e^{\frac{\beta}{4} \nu}$ together with the scale invariance \eqref{eq:scale_inv} justifies the last step.  Letting $\eps\to 0$,  the bounded convergence theorem yields \[
 \pi E[\HPb[0,\lambda]]=\frac{\lambda}{2}+\int_{0}^\lambda \sum_{k=1}^\infty    \frac{(-\delta)^{\uparrow k}}{ (1+\delta)^{\uparrow k}}  E[\cos(k \alpha_\lambda(u)] du,
\] and the statement of Theorem \ref{thm:1p_HP} follows.
\end{proof}

\section{Large $\lambda$ bounds}\label{sec:large_lambda}

The goal of this section is to prove Theorem \ref{thm:large_lambda}. The main ingredient is the following lemma. 
\begin{lemma}\label{lem:E_alpha}
Fix $\beta>0$ and let $\alpha_\lambda(t)$ be the strong solution of \eqref{eq:alphaSDE_b}. There is a positive finite constant $c$ only depending on $\beta$ so that the following bounds hold for $\lambda\ge 2$. 
\begin{align}\label{eq:eialpha_1}
   \big|E[e^{i \alpha_\lambda(0)}]\big|
   &\le c \left(\lambda^{-1} + \lambda^{-4/\beta}\right), \\[0.5em]
   \big|E[e^{i k \alpha_\lambda(0)}]\big|
   &\leq c  \Big(k^2 \lambda^{-1}+   \log(\lambda)\,\lambda^{-4k^2/\beta} \cdot \ind(4k^2\le \beta)\Big), \qquad k\ge 2.\label{eq:eialpha_k}
\end{align}
\end{lemma}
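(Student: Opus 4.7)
The plan is to apply It\^o's formula to $e^{ik\alpha_\lambda(u)}$ and study the resulting linear ODE for $f_k(u) := E[e^{ik\alpha_\lambda(u)}]$. Taking expectations in the analogue of \eqref{eq:QkSDE} with $\delta = \beta/2$ yields
\begin{equation*}
f_k'(u) = \Bigl(ik\lambda\tfrac{\beta}{4}e^{\beta u/4} - k^2\Bigr)\, f_k(u) + R_k(u),
\qquad R_k(u) := \tfrac{k(k+\beta/2)}{2}\, f_{k-1}(u) + \tfrac{k(k-\beta/2)}{2}\, f_{k+1}(u),
\end{equation*}
with $\lim_{u \to -\infty} f_k(u) = 1$. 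The integrating factor $e^{-ik\lambda e^{\beta u/4} + k^2 u}$---whose factor $e^{k^2 u}$ kills the boundary at $-\infty$---followed by the substitution $v = e^{\beta u/4}$, recasts $f_k(0)$ as an oscillatory integral on $[0,1]$:
\begin{equation*}
f_k(0) = \frac{4 e^{ik\lambda}}{\beta}\int_0^1 e^{-ik\lambda v}\, v^{4k^2/\beta - 1}\, \tilde R_k(v)\, dv,
\qquad \tilde R_k(v) := R_k\bigl(\tfrac{4}{\beta}\log v\bigr),
\end{equation*}
with $\tilde R_k(0) = k^2$.

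I decompose $\tilde R_k = k^2 + (\tilde R_k - k^2)$. The constant piece $k^2\int_0^1 e^{-ik\lambda v} v^{4k^2/\beta - 1} dv$ is a standard Fourier--Mellin-type integral: substituting $w = k\lambda v$ and using $\int_0^\infty e^{-iw}w^{4k^2/\beta - 1} dw = \Gamma(4k^2/\beta)(i)^{-4k^2/\beta}$ together with IBP on the tail $\int_{k\lambda}^\infty$ shows that the dominant contribution is of order $\lambda^{-4k^2/\beta}$ from the singularity at $v = 0$ when $4k^2 \le \beta$ (with a logarithmic factor in the borderline case $4k^2 = \beta$), and of order $1/\lambda$ from the IBP in all regimes. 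For the remainder, I use $|f_j(s) - 1| \le j\, E|\alpha_\lambda(s)| \wedge 2$ together with $E|\alpha_\lambda(s)| \le C\lambda\, e^{\beta s/4}$ as $s \to -\infty$ (since the deterministic drift contributes exactly $\lambda e^{\beta s/4}$ while the sine and noise terms are of lower order in $e^{\beta s/4}$), which gives $|\tilde R_k(v) - k^2| \le Ck(k+\beta)(\lambda v \wedge 1)$. Splitting the integral at $v_* = 1/(k\lambda)$, the linear estimate on $[0, v_*]$ produces a contribution of order $\lambda^{-4k^2/\beta}$; on $[v_*, 1]$ I integrate by parts against $e^{-ik\lambda v}$ to obtain a factor $1/(k\lambda)$, with $\tilde R_k'(v)$ controlled using the ODEs for $f_{k\pm 1}$.

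The main obstacle is the IBP step on $[v_*, 1]$. The derivative $\tilde R_k'(v)$ inherits a factor $k\lambda v$ from the oscillatory drift in the equations for $f_{k\pm 1}$, which a priori cancels the $1/(k\lambda)$ gained from IBP. To handle this, I iterate the integration by parts, so that each newly generated oscillatory factor is absorbed into boundary terms at $v_*$ (of order $\lambda^{-4k^2/\beta}$) while the surviving interior integrals shrink by successive factors $1/(k\lambda)$. Careful tracking of the $k$- and $\beta$-dependence of constants through this recursion---in particular near the borderline $4k^2 = \beta$, where the prefactor $1/|4k^2/\beta - 1|$ becomes singular---is what produces the logarithmic factor appearing in the $k \ge 2$ bound.
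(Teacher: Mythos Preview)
Your setup coincides with the paper's: the same It\^o computation, the same integrating factor $e^{-ik\lambda e^{\beta u/4}+k^2u}$, the same split of the integration domain at scale $\lambda e^{\beta u/4}\sim 1$. The genuine gap is in the step you yourself flag as ``the main obstacle.'' When you integrate by parts against $e^{-ik\lambda v}$ on $[v_*,1]$, the derivative $\tilde R_k'(v)$ involves $f_{k\pm1}'$, and by the very ODE you wrote down these satisfy $f_{k\pm1}'(u)=\bigl(i(k\pm1)\lambda\tfrac{\beta}{4}e^{\beta u/4}+\dots\bigr)f_{k\pm1}+R_{k\pm1}$. So each differentiation produces a factor of order $(k\pm1)\lambda$, which exactly cancels the $1/(k\lambda)$ you gained from IBP. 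The iteration you describe therefore does not shrink the interior integrals by successive factors $1/(k\lambda)$; the ratio of consecutive terms is $O(1)$ in $\lambda$, and the scheme neither terminates nor sums. Your claim that ``the surviving interior integrals shrink by successive factors $1/(k\lambda)$'' is false as stated.

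The paper's fix is to recognize that the oscillation in $e^{-ik\lambda e^{\beta u/4}}$ should not be integrated against $e^{i(k-1)\alpha_\lambda(u)}$ directly, but against the \emph{phase-shifted} process $e^{i(k-1)\alpha_\lambda(u)-i(k-1)\lambda e^{\beta u/4}}$. Concretely, set $\Lambda_k(u)=\int_{T_\lambda}^u e^{-i\lambda e^{\beta s/4}+k^2s}\,ds$ (so $\Lambda_k'(u)\cdot e^{-i(k-1)\lambda e^{\beta u/4}}$ reproduces the integrand $e^{k^2u-ik\lambda e^{\beta u/4}}$) and apply It\^o to the product $\Lambda_k(u)\,e^{i(k-1)\alpha_\lambda(u)-i(k-1)\lambda e^{\beta u/4}}$. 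The point is that by \eqref{eq:SDE_1} the shifted process has drift bounded by $(2+\tfrac{\beta}{2})(k-1)^2$, \emph{with no $\lambda$ dependence}: the phase $-i(k-1)\lambda e^{\beta u/4}$ kills exactly the dangerous term in $f_{k-1}'$ that was breaking your iteration. After one such stochastic IBP the remaining integral is bounded by $(k-1)^2\int_{T_\lambda}^0|\Lambda_k(u)|\,du$, and $|\Lambda_k|$ is controlled by a single deterministic IBP as in \eqref{eq:llasym_ibp}. Finally, the improved bound \eqref{eq:eialpha_1} for $k=1$ is obtained in the paper by a bootstrap: the $(k-1)$-term is elementary (since $f_0\equiv1$), while the $(k+1)$-term is handled by combining the already-proved $k=2$ bound with the scale invariance \eqref{eq:scale_inv}, a step your proposal does not address.
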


We first show how Theorem \ref{thm:large_lambda} follows from the lemma.
\begin{proof}[Proof of Theorem \ref{thm:large_lambda}]
In the arguments below $c$ is always a positive constant depending only on $\beta$, but its value might change from line to line. 

From Theorem \ref{thm:sineb_corr} and the bound \eqref{eq:coeff_bnd} we obtain
\begin{align}\label{eq:trunc_bnd}
    \left| \rho_\beta^{(2)}(0,\lambda)-\frac{1}{4\pi^2}\right|\le c\sum_{k=1}^\infty  k^{-1-\beta} \left|E[e^{i k \alpha_\lambda(0)}]\right|.
\end{align}
When $\beta > 2$ we use \eqref{eq:eialpha_1} and \eqref{eq:eialpha_k} in each term of \eqref{eq:trunc_bnd} to get
\begin{align*}
    \left| \rho_\beta^{(2)}(0,\lambda)-\frac{1}{4\pi^2}\right|
    &\leq c \big(\lambda^{-1}+\lambda^{-4/\beta}\big)
      + c \sum_{k=2}^\infty  k^{1-\beta} \lambda^{-1}+c \!\!\sum_{2\le k\le \sqrt{\beta/4}} k^{-1-\beta}\log(\lambda) \lambda^{-4k^2/\beta}\\
&\le c(\lambda^{-1}+\lambda^{-4/\beta}).
\end{align*}
When $0 < \beta < 2$ we bound $\left|E[e^{i k \alpha_\lambda(0)}]\right|$ by 1 for $k>\lambda^{1/2}$, and use \eqref{eq:eialpha_1}-\eqref{eq:eialpha_k} otherwise. Note that in this case $4k^2>\beta$ holds for all $k$.
\begin{align*}
    \left| \rho_\beta^{(2)}(0,\lambda)-\frac{1}{4\pi^2}\right|
     &\le c\big(\lambda^{-1}+\lambda^{-4/\beta}\big)
      + c\sum_{2\le k\le \lambda^{1/2}}
  k^{1-\beta} \lambda^{-1}+ c \sum_{k> \lambda^{1/2}} k^{-1-\beta}\\
    &\le c \lambda^{-\beta/2}.
\end{align*}
Finally, when $\beta=2$, the sum on the right of \eqref{eq:twopoint_cor} only contains a single term, $E[\cos(\alpha_\lambda(0))]$, hence from \eqref{eq:eialpha_1} we obtain the desired upper bound. Of course, for $\beta=2$ we also have the exact form of $\rho_2^{(2)}$ from \eqref{eq:sine_kernel}, which would yield a more precise bound.
\end{proof}

Now we turn to the proof of Lemma \ref{lem:E_alpha}. 
\begin{proof}[Proof of Lemma \ref{lem:E_alpha}]
Fix $k\in \Z_+$. From \eqref{eq:alphaSDE_b} and It\^o's formula we see that the evolution of the process $e^{i k \alpha_\lambda(u) - i k \lambda e^{\frac{\beta}{4}u} + k^2 u}$ is governed by the following SDE:
\begin{align}
    \label{eq:SDE_1}
    d e^{i k \alpha_\lambda(u) - i k \lambda e^{\frac{\beta}{4}u} + k^2 u}
    &= \frac{k(k+\frac{\beta}{2})}{2}\, e^{k^2 u - i k \lambda e^{\frac{\beta}{4}u}} e^{i (k-1)\alpha_\lambda(u)}\, du
     + \frac{k(k-\frac{\beta}{2})}{2}\, e^{k^2 u - i k \lambda e^{\frac{\beta}{4}u}} e^{i (k+1)\alpha_\lambda(u)}\, du \nonumber \\
    &\quad + \frac{k(-1+e^{i\alpha_\lambda(u)})}{2}\,
        e^{i(k-1)\alpha_\lambda(u) + k^2 u - i\lambda e^{\frac{\beta}{4}u}}\, d\bar{Z} \nonumber \\
    &\quad + \frac{k(1+e^{i\alpha_\lambda(u)})}{2}\,
        e^{i(k+1)\alpha_\lambda(u) + k^2 u - i k\lambda e^{\frac{\beta}{4}u}}\, dZ.
\end{align}
Note that the coefficient of each term in this SDE is bounded by a $k$-dependent constant times $e^{k^2 u}$. Hence by $\lim\limits_{u\to -\infty} \alpha_\lambda(u)=0$ and the bounded convergence theorem we can compute $E[e^{i k \alpha_\lambda(0)}]=E[e^{i k \alpha_\lambda(0) - i k \lambda e^{\frac{\beta}{4}\cdot 0} + k^2\cdot 0}]$ by integrating the expected values of the drift terms in \eqref{eq:SDE_1} on $(-\infty,0]$. 
\begin{align}\notag
   E[e^{i k \alpha_\lambda(0)}]&=\int_{-\infty}^0  \frac{k(k+\frac{\beta}{2})}{2}\, e^{k^2 u - i k \lambda e^{\frac{\beta}{4}u}} E[e^{i (k-1)\alpha_\lambda(u)}]\, du\\
   &\quad+\int_{-\infty}^0\frac{k(k-\frac{\beta}{2})}{2}\, e^{k^2 u - i k \lambda e^{\frac{\beta}{4}u}} E[e^{i (k+1)\alpha_\lambda(u)}]\, du
   \label{eq:E_qk}
\end{align}
We estimate $ | E[e^{i k \alpha_\lambda(0)}]|$ by estimating the absolute values of the integrals in \eqref{eq:E_qk}. 
Let $T_\lambda:=-\frac{4 \log \lambda}{\beta}< 0$. On the interval $(-\infty,T_\lambda]$, we bound the integrals by taking the absolute values inside together with $|E[e^{i j \alpha_\lambda(u)}]|\le 1$ to obtain
\begin{align}\notag
& \left| \int_{-\infty}^{T_\lambda}  \frac{k(k+\frac{\beta}{2})}{2}\, e^{k^2 u - i k \lambda e^{\frac{\beta}{4}u}} E[e^{i (k-1)\alpha_\lambda(u)}]\, du  +\int_{-\infty}^{T_\lambda}\frac{k(k-\frac{\beta}{2})}{2}\, e^{k^2 u - i k \lambda e^{\frac{\beta}{4}u}} E[e^{i (k+1)\alpha_\lambda(u)}]\, du\right|\\
&\qquad \le \frac{k(k+\tfrac{\beta}{2})+k|k-\tfrac{\beta}{2}|}{2}\int_{-\infty}^{T_\lambda} e^{k^2 u} du=\frac{k(k+\tfrac{\beta}{2})+k|k-\tfrac{\beta}{2}|}{2k^2} \lambda^{-\frac{4k^2}{\beta}}\le (\tfrac{\beta}{2}+1) \lambda^{-\frac{4k^2}{\beta}}.\label{eq:term_97}
\end{align}
In order to control the integrals in \eqref{eq:E_qk} on  $[T_\lambda,0]$, we need more delicate estimates. 
For $T_\lambda\le u\le 0$ we set
\[
    \Lambda_k(u)
    := \int_{T_\lambda}^u e^{-i \lambda e^{\frac{\beta}{4}s}+ k^2 s}\, ds
    = \frac{4}{\beta}\, \lambda^{-4k^2/\beta}
        \int_1^{\lambda e^{\beta u/4}} e^{-i u} u^{\frac{4k^2}{\beta}-1}\, du.
\]
Integration by parts shows that  
\begin{align}\label{eq:llasym_ibp}
          \left|\int_1^v e^{-i u} u^{a-1}\,du\right|
    \;\leq\; 2 (v^{a-1}+\ind(a<1)), \qquad \text{for all $a>0$, $v>1$.}
\end{align}
This implies the bound
\begin{align}\label{eq:bd_Lambdak}
    |\Lambda_k(u)|
    \;\leq\; \frac{8}{\beta}\Big(\lambda^{-4k^2/\beta}\cdot \ind(4k^2<\beta)
    + \lambda^{-1}\, e^{\frac{\beta}{4}u(\frac{4k^2}{\beta}-1)}\Big), \qquad T_\lambda\le u\le 0.
\end{align}
Note that for $k=1$ we have
\begin{align}\label{eq:Lambda1_99}
\int_{T_\lambda}^0  \frac{k(k+\frac{\beta}{2})}{2}\, e^{k^2 u - i k \lambda e^{\frac{\beta}{4}u}} E[e^{i (k-1)\alpha_\lambda(u)}]\, du=\tfrac{1+\frac{\beta}{2}}{2} \Lambda_1(0), 
\end{align}
which can be bounded by using \eqref{eq:bd_Lambdak}.

Fix  $k\ge 2$.  Applying Itô’s formula for the process $\Lambda_k(u) \cdot  e^{i (k-1)\alpha_\lambda(u)-i \lambda (k-1)e^{\frac{\beta}{4}u}}$ yields
\begin{align}\label{eq:stoch_part_int}
  \Lambda_k(0)   e^{i (k-1)\alpha_\lambda(0)}=\int_{T_\lambda}^0  e^{k^2 u - i k \lambda e^{\frac{\beta}{4}u}} e^{i (k-1)\alpha_\lambda(u)}\, du+\int_{T_\lambda}^0 \Lambda_k(u) \,d\!\left[e^{i (k-1)\alpha_\lambda(u)-i \lambda (k-1)e^{\frac{\beta}{4}u}}\right]. 
\end{align}
It\^o's formula shows that in $d[e^{i (k-1) \alpha_\lambda - i \lambda (k-1) e^{\frac{\beta}{4}u}}]$ on the interval $[T_\lambda,0]$ the drift terms are uniformly bounded by $(2+\tfrac{\beta}{2})(k-1)^2$ and the coefficients of $dZ$ and $d\bar Z$ are both bounded by  $k-1$.
Hence after taking expectations in \eqref{eq:stoch_part_int} and using the bound \eqref{eq:bd_Lambdak} we obtain
\begin{align}\notag
   & \left|E \int_{T_\lambda}^0  e^{k^2 u - i k \lambda e^{\frac{\beta}{4}u}} e^{i (k-1)\alpha_\lambda(u)}\, du\right|\le |\Lambda_k(0)|+(2+\tfrac{\beta}{2})(k-1)^2\int_{T_\lambda}^0 |\Lambda_k(u)|du\\\notag
    &\qquad \le c (\lambda^{-4k^2/\beta} \ind(4k^2<\beta)+\lambda^{-1})+c (k-1)^2 \int_{T_\lambda}^0 (\lambda^{-4k^2/\beta} \ind(4k^2<\beta)
    + \lambda^{-1}\, e^{\frac{\beta}{4}u(\frac{4k^2}{\beta}-1)}\Big) du\\
    &\qquad\le  c \left(\lambda^{-4k^2/\beta} \log \lambda \cdot \ind(4k^2<\beta)+\lambda^{-1}\right)\label{eq:term_98}.
\end{align}
Here, and in the rest of this proof, $c$ is a constant only depending on $\beta$, and possibly changing from line to line. Using the same approach, we also get the following bound for any $k\ge 1$:
\begin{align}
       \left|E \int_{T_\lambda}^0  e^{k^2 u - i k \lambda e^{\frac{\beta}{4}u}} e^{i (k+1)\alpha_\lambda(u)}\, du\right|\le c \left(\lambda^{-4k^2/\beta} \log \lambda \cdot \ind(4k^2<\beta)+\lambda^{-1}\right)\label{eq:term_99}.
\end{align}
Combining our bounds, we obtain \eqref{eq:eialpha_k}. \\
To prove \eqref{eq:eialpha_1} we return to the intermediate estimate
\begin{align}
    |E[e^{i \alpha_\lambda(0)}]|&\le c (\lambda^{-\frac{4}{\beta}}+\lambda^{-1})+c \left|\int_{T_\lambda}^0 e^{u - i \lambda e^{\frac{\beta}{4}u}} E[e^{i 2\alpha_\lambda(u)}]\, du\right|
\end{align}
which follows from \eqref{eq:E_qk}, \eqref{eq:term_97},  \eqref{eq:bd_Lambdak}, and \eqref{eq:Lambda1_99}.

The scale invariant property \eqref{eq:scale_inv}  of the $\alpha_\lambda$ diffusion together with the bound \eqref{eq:eialpha_k} for $k=2$ (with $\lambda^{-16/\beta} \log\lambda \cdot \ind(16\le \beta)$ replaced with $\lambda^{-8/\beta}$) leads to 
\begin{align*}
    \left|E[e^{i 2\alpha_\lambda(u)}]\right|= \left|E[\exp(i 2\alpha_{\lambda  e^{\frac{\beta}{4}u}}(0))]\right|\le c \left((\lambda e^{\frac{\beta}{4}u})^{-8/\beta}+(\lambda e^{\frac{\beta}{4}u})^{-1}\right).
\end{align*}
From this, we  obtain
\begin{align*}
  \left|\int_{T_\lambda}^0 e^{u - i \lambda e^{\frac{\beta}{4}u}} E[e^{i 2\alpha_\lambda(u)}]\, du\right|\le   \int_{T_\lambda}^0 e^{u} \left|E[e^{i 2\alpha_\lambda(u)}]\right|\, du\le c (\lambda^{-4/\beta}+\lambda^{-1}),
\end{align*}
which completes the proof of \eqref{eq:eialpha_1}.
\end{proof}

\section{Results for $\delta=n$}\label{sec:delta_n}

Our goal in this section is to prove Theorem \ref{thm:sineb_series} and  Corollary \ref{cor:small_lambda}, together with  similar results for $\rho_{\beta,n}^{(1)}(\lambda)$ with $n\in \Z_+$.\\
Fix $\beta>0$ and $\delta=n\in \Z_+$. In this case the infinite sum in Theorem \ref{thm:1p_HP} becomes a finite sum with $n$ terms, and the coefficients can be simplified as well. We have 
\begin{align}\label{eq:rho_HP_n}
      \rho^{(1)}_{\beta,n}(\lambda)=\frac{1}{2\pi}+\frac{1}{\pi}\sum_{k=1}^n (-1)^k  \frac{\binom{2n}{n+k}}{\binom{2n}{n}} \Re E[e^{i k \alpha_\lambda(0)}], 
\end{align}
with $\alpha_\lambda(u)$ the strong solution of \eqref{eq:alphaSDE} with initial condition \eqref{eq:SDE_initial} and $\delta=n$. 
Let
 \begin{align}
    \mathbf{q}(\lambda)=[q_1(\lambda), \dots, q_n(\lambda)]^T, \qquad q_k(\lambda)=E[e^{i k\alpha_\lambda(0)}].
\end{align}   
(We do not explicitly denote the dependence on $\beta$ and $n$.)
Recall the definition of $\mathbf{A}_n, \mathbf{B}_n, \mathbf{e}_n$  from \eqref{eq:AB}, \eqref{eq:vectors}.
Define $\mathbf{s}_n\in \CC^n , n\ge 0$
as
\begin{align}\label{eq:q_coeff_1}
    \mathbf{s}_0&=-\frac{n+1}{2}\mathbf{A}_n^{-1}\mathbf{e}_n, \qquad \mathbf{s}_k=i \left(k \mathbf{I}-\tfrac{4}{\beta} \mathbf{A}_n\right)^{-1} \mathbf{B}_n \mathbf{s}_{k-1}, \qquad k\ge 1.
\end{align}
Note that this agrees with the previous definition \eqref{eq:q_coeff} if $\beta=2n$, but we will need this sequence now for general $\beta>0$. From Lemma \ref{lem:An_eval} below, it will follow that $\mathbf{s}_k, k\ge 0$ are well defined. \\
We also introduce the notation
\begin{align}\label{eq:f}
    \mathbf{f}_n=[1,1,\dots,1]^T\in \R^n.
\end{align}
The main results of this section are the following theorem and its corollary. 

\begin{theorem}\label{thm:even_beta}
 The vector-valued function $\mathbf{q}(\lambda)$ satisfies the ODE
 \begin{align}\label{eq:ode_q_vec}
     \frac{\beta}{4} \lambda \mathbf{q}'(\lambda) = (i \frac{\beta}{4}\lambda  \mathbf{B}_n   +\mathbf{A}_n) \mathbf{q}(\lambda)+\frac{n+1}{2} \mathbf{e}_n, \qquad \mathbf{q}(0)=\mathbf{f}_n.
\end{align}
The functions $\lambda\mapsto q_k(\lambda)$ can be extended to $\CC$ as entire functions  so that \eqref{eq:ode_q_vec} holds for all $\lambda\in \CC$. With this extension, we have for all $\lambda\in \CC$
\begin{align}\label{eq:q_expansion}
    \mathbf{q}(\lambda)=\sum_{j=0}^\infty \mathbf{s}_j \lambda^j.
\end{align} 
\end{theorem}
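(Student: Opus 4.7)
The plan is to derive a Volterra-type integral equation for $\mathbf{q}(\lambda)$ from the It\^o evolution \eqref{eq:QkSDE} of $e^{ik\alpha_\lambda(u)}$ and then analyze it both as the ODE \eqref{eq:ode_q_vec} on $\R\setminus\{0\}$ and as a fixed-point problem at the regular singular point $\lambda=0$. The key structural observation is that for $\delta=n\in\Z_+$ the coefficient $\tfrac{k(k-n)}{2}$ in \eqref{eq:QkSDE} vanishes at $k=n$, so the first $n$ moments $q_1,\ldots,q_n$ form a closed system; this finite-dimensional closure is what makes the $\beta=2n$ case special.

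Concretely, I would integrate \eqref{eq:QkSDE} for $k=1,\ldots,n$ on $(-\infty,0]$ and take expectations. The martingale integrated against the exponentially decaying coefficients has mean zero, and bounded convergence together with $\alpha_\lambda(-\infty)=0$ from \eqref{eq:SDE_initial} gives $E[e^{ik\alpha_\lambda(-\infty)}]=1$. Substituting the scale-invariance identity $E[e^{ik\alpha_\lambda(u)}]=q_k(\lambda e^{\beta u/4})$ (from \eqref{eq:scale_inv}) and changing variables $v=\lambda e^{\beta u/4}$ produces
\[
\mathbf{q}(\lambda) = \mathbf{f}_n + \frac{4}{\beta}\int_0^\lambda \frac{\mathbf{A}_n \mathbf{q}(v) + \tfrac{n+1}{2}\mathbf{e}_n}{v}\,dv + i\int_0^\lambda \mathbf{B}_n \mathbf{q}(v)\,dv,
\]
where the extra $\tfrac{n+1}{2}\mathbf{e}_n$ term is exactly the contribution of $q_0\equiv 1$ appearing in the $k=1$ drift that is not captured by $\mathbf{A}_n\mathbf{q}$ in its top row. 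A direct calculation from \eqref{eq:AB} and \eqref{eq:f} yields the key algebraic identity $\mathbf{A}_n\mathbf{f}_n = -\tfrac{n+1}{2}\mathbf{e}_n$, so the integrand on the right is continuous at $v=0$ once $\mathbf{q}(0)=\mathbf{f}_n$ (which holds because $\alpha_0\equiv 0$). Differentiating in $\lambda$ recovers \eqref{eq:ode_q_vec}.

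For the power series statement I would substitute $\mathbf{q}(\lambda)=\sum_{j\ge 0}\mathbf{s}_j\lambda^j$ into \eqref{eq:ode_q_vec} and match powers of $\lambda$: the $\lambda^0$ coefficient forces $\mathbf{A}_n\mathbf{s}_0 = -\tfrac{n+1}{2}\mathbf{e}_n$ (consistent with $\mathbf{s}_0=\mathbf{f}_n$), while the $\lambda^j$ coefficient is exactly the recursion \eqref{eq:q_coeff_1}. Invertibility of $\mathbf{A}_n$ and of $j\mathbf{I}-\tfrac{4}{\beta}\mathbf{A}_n$ for every $j\ge 1$ is the content of Lemma \ref{lem:An_eval}, and the resulting spectral bound $\|(j\mathbf{I}-\tfrac{4}{\beta}\mathbf{A}_n)^{-1}\|=O(1/j)$ together with $\|\mathbf{B}_n\|=n$ gives $\|\mathbf{s}_j\|\le C\,n^j/j!$. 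Hence $\tilde{\mathbf{q}}(\lambda):=\sum_{j\ge 0}\mathbf{s}_j\lambda^j$ converges on all of $\CC$ and defines an entire solution of \eqref{eq:ode_q_vec} with $\tilde{\mathbf{q}}(0)=\mathbf{f}_n$. To identify $\mathbf{q}=\tilde{\mathbf{q}}$ on $\R$ I would invoke uniqueness for the Volterra equation above on a small disc around $0$: the identity $\mathbf{A}_n\mathbf{f}_n+\tfrac{n+1}{2}\mathbf{e}_n=0$ keeps the kernel bounded near $v=0$, and a standard contraction/Gr\"onwall argument gives unique continuous solvability; extension to all of $\R$ then follows from standard uniqueness for the ODE on $\R\setminus\{0\}$, and the entire extension of each $q_k$ is provided by $\tilde{q}_k$.

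The main obstacle I anticipate is the careful justification of the $u\to-\infty$ limit in the first step: one must verify that the stochastic integral on $(-\infty,0]$ has zero expectation and that the drift terms integrate absolutely. Since $|e^{ik\alpha_\lambda(u)}|\le 1$ and the $\lambda$-dependent coefficient in \eqref{eq:QkSDE} carries the factor $e^{\frac{\beta}{4}u}$, a truncation at $u=-N$ combined with bounded convergence should suffice; the same type of argument already appears in the proof of Theorem \ref{thm:1p_HP}.
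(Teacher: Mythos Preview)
Your approach is correct and is a genuinely different route from the paper's. The paper never works directly at the regular singular point $\lambda=0$. Instead it introduces the truncated diffusion $\alpha_{\lambda,\nu}$ (started at time $\nu$ with value $0$), derives an ODE in the time variable $u$ for $\mathbf q_\nu(u,\lambda)=E[e^{ik\alpha_{\lambda,\nu}(u)}]_{k\le n}$, solves it explicitly as a $\lambda$--power series with $u$-dependent coefficients $\mathbf s_{k,\nu}(u)$, and only then lets $\nu\to-\infty$ at the level of the coefficients (Lemma~\ref{lem:coeff_s}). This sidesteps all uniqueness issues at $\lambda=0$ and yields the integral formulas \eqref{eq:s_nu_def1}--\eqref{eq:s_nu_def2} for free. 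Your approach is more direct: use scale invariance once to turn the $u$-evolution into the $\lambda$-ODE \eqref{eq:ode_q_vec} on $\R\setminus\{0\}$, build the formal series $\tilde{\mathbf q}$, and identify $\mathbf q=\tilde{\mathbf q}$ by a uniqueness argument. This is closer in spirit to Proposition~\ref{prop:ODE_gen}, which the paper proves afterwards and only for the ODE part.

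The one soft spot is the uniqueness step. Writing the Volterra equation with the lower limit $0$ requires $\int_0^\lambda \tfrac{\mathbf A_n(\mathbf q(v)-\mathbf f_n)}{v}\,dv$ to converge, and mere continuity of $\mathbf q$ with $\mathbf q(0)=\mathbf f_n$ does not give that; the identity $\mathbf A_n\mathbf f_n+\tfrac{n+1}{2}\mathbf e_n=0$ makes the \emph{integrand} vanish at $0$ but does not make the Volterra \emph{kernel} $\mathbf A_n/v$ bounded, so a ``standard contraction/Gr\"onwall'' on continuous functions does not go through. The clean fix is to bypass the Volterra formulation entirely for this step: you already have the ODE on $(0,\infty)$ from the truncated equation on $[\epsilon,\lambda]$ (no limit $\epsilon\to 0$ needed for that), and $\mathbf w:=\mathbf q-\tilde{\mathbf q}$ is a bounded solution of the homogeneous equation $\tfrac{\beta}{4}\lambda\mathbf w'=(i\tfrac{\beta}{4}\lambda\mathbf B_n+\mathbf A_n)\mathbf w$. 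By Lemma~\ref{lem:An_eval} the indicial matrix $\tfrac{4}{\beta}\mathbf A_n$ has only strictly negative eigenvalues, so every nonzero homogeneous solution behaves like $\lambda^{\mu}$ with $\mu<0$ near $0$ and is unbounded; hence $\mathbf w\equiv 0$. With this replacement your argument is complete.
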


\begin{corollary}
\label{cor:int_case_rho}
   With the notations introduced above, the density  of the $\operatorname{HP}_{\beta, n}$ process is given by
\begin{align}\label{eq:int_case_rho}
    \rho^{(1)}_{\beta,n}(\lambda) =\frac{1}{2\pi}\left(1+2\;\mathbf{v}_n^T\Re \mathbf{q}(\lambda)\right)=\frac{1}{\pi} \sum_{j=1}^\infty \mathbf{v}_n\cdot \mathbf{s}_{2j} \lambda^{2j}.
\end{align}
\end{corollary}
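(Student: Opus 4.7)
\medskip

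\noindent\textit{Plan for proving Corollary \ref{cor:int_case_rho}.}
The corollary is essentially a bookkeeping step that combines Theorem~\ref{thm:1p_HP} (or rather its $\delta=n$ specialization \eqref{eq:rho_HP_n}) with the power series representation of $\mathbf{q}(\lambda)$ from Theorem~\ref{thm:even_beta}. The plan is to prove the two equalities separately, the first purely by unpacking notation, the second by identifying the parity structure of the coefficients $\mathbf{s}_j$ and computing a single binomial sum.

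For the first equality, I would note that by the definition of $\mathbf{v}_n$ in \eqref{eq:vectors} and of $\mathbf{q}$ above, the sum on the right-hand side of \eqref{eq:rho_HP_n} is exactly $\mathbf{v}_n^T \Re \mathbf{q}(\lambda)$ (using that $\mathbf{v}_n$ is real, so $\Re$ commutes with $\mathbf{v}_n^T$). The prefactor $\frac{1}{\pi}$ rewrites as $\frac{2}{2\pi}$, giving $\rho^{(1)}_{\beta,n}(\lambda)=\frac{1}{2\pi}\bigl(1+2\mathbf{v}_n^T\Re\mathbf{q}(\lambda)\bigr)$ immediately.

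For the second equality, I would substitute the entire-function expansion $\mathbf{q}(\lambda)=\sum_{j=0}^\infty \mathbf{s}_j\lambda^j$ provided by Theorem~\ref{thm:even_beta}. The recursion \eqref{eq:q_coeff_1} has the form $\mathbf{s}_k=i\,M_k\mathbf{s}_{k-1}$ with $M_k=\bigl(k\mathbf{I}-\tfrac{4}{\beta}\mathbf{A}_n\bigr)^{-1}\mathbf{B}_n$ a real matrix, and $\mathbf{s}_0=-\frac{n+1}{2}\mathbf{A}_n^{-1}\mathbf{e}_n$ is real. A one-line induction then shows $\mathbf{s}_{2j}\in \R^n$ and $\mathbf{s}_{2j+1}\in i\R^n$, so $\Re\mathbf{q}(\lambda)=\sum_{j=0}^\infty \mathbf{s}_{2j}\lambda^{2j}$ for $\lambda\in\R$. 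The convergence for all $\lambda\in\R$ is part of Theorem~\ref{thm:even_beta}, so no additional analytic work is required.

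The only piece that is not immediate is the computation of the constant term $\mathbf{v}_n^T\mathbf{s}_0$, which must equal $-\tfrac{1}{2}$ for the $\tfrac{1}{2\pi}$ in the first equality to cancel against $\tfrac{2}{2\pi}\mathbf{v}_n^T\mathbf{s}_0$. Since $\mathbf{q}(0)=\mathbf{f}_n$ (as $\alpha_0\equiv 0$ by Proposition~\ref{prop:alphaSDE}), we have $\mathbf{s}_0=\mathbf{f}_n$; this is consistent with the formula $\mathbf{s}_0=-\frac{n+1}{2}\mathbf{A}_n^{-1}\mathbf{e}_n$, which one can verify directly by checking $\mathbf{A}_n\mathbf{f}_n=-\frac{n+1}{2}\mathbf{e}_n$ (rows $2,\dots,n$ telescope to $0$, row $1$ gives $-1+\tfrac{1-n}{2}=-\tfrac{n+1}{2}$). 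Hence $\mathbf{v}_n^T\mathbf{s}_0=\sum_{k=1}^n(-1)^k\binom{2n}{n+k}/\binom{2n}{n}$, and the identity $0=(1-1)^{2n}=\binom{2n}{n}+2\sum_{k=1}^n(-1)^k\binom{2n}{n+k}$ (using the symmetry $\binom{2n}{n-k}=\binom{2n}{n+k}$) gives $\mathbf{v}_n^T\mathbf{s}_0=-\tfrac{1}{2}$. After this cancellation, the remaining terms assemble into $\frac{1}{\pi}\sum_{j=1}^\infty \mathbf{v}_n^T\mathbf{s}_{2j}\lambda^{2j}$, proving the second equality. The only mild subtlety — and therefore the closest thing to an obstacle — is the binomial identity; everything else is linear-algebraic bookkeeping backed by Theorem~\ref{thm:even_beta}.
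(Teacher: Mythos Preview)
Your proposal is correct and follows essentially the same route as the paper: you identify the first equality directly from \eqref{eq:rho_HP_n} and the definition of $\mathbf{v}_n$, establish $\mathbf{s}_0=\mathbf{f}_n$ by checking $\mathbf{A}_n\mathbf{f}_n=-\tfrac{n+1}{2}\mathbf{e}_n$, compute $\mathbf{v}_n^T\mathbf{f}_n=-\tfrac12$ via $(1-1)^{2n}=0$, and use the parity of the $\mathbf{s}_j$ from the recursion. The paper packages the verification of $\mathbf{s}_0=\mathbf{f}_n$ into a separate claim (Claim~\ref{cl:vAnBnf}), but the argument is otherwise identical.
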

Using Proposition \ref{prop:sine_HP} with $\beta=2n$ gives the analogue statements for $\rho_{2n}^{(2)}(0,\lambda)$. Theorem \ref{thm:sineb_series}  follows from Corollary \ref{cor:int_case_rho}, and Theorem \ref{thm:even_beta} leads to a description of $\rho_{2n}^{(2)}(0,\lambda)$ in terms of the ODE \eqref{eq:ode_q_vec} (with $\beta=2n$). \\
We also identify the behavior of $\rho_{\beta,n}^{(1)}(\lambda)$ near 0. 
\begin{proposition}
\label{prop:small_lambda_asym}
    For $\lambda$ near 0, we have 
       \begin{align}\label{eq:small_lambda_HP}
        \rho_{\beta,n}^{(1)}(\lambda) =\frac{1}{2\pi} 
      \binom{2n}{n}^{-1}\frac{\left(\tfrac{\beta}{2}\right)^{2n}}{\left(1+\frac{\beta}{2}\right)^{\uparrow 2n}}   \lambda^{2n}  
        + \mathcal{O}(\lambda^{2n+2}).
    \end{align} 
\end{proposition}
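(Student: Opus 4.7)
The plan is to reduce the small-$\lambda$ behavior of $\rho^{(1)}_{\beta,n}$ to a single moment calculation based on the linearization of \eqref{eq:alphaSDE} at $\lambda=0$. First I would combine equation \eqref{eq:rho_HP_n} with the Fourier identity
\[
\sum_{k=-n}^n (-1)^k \binom{2n}{n+k} e^{ik\theta} = (2\sin(\theta/2))^{2n},
\]
which follows from expanding $(1-e^{i\theta})^n(1-e^{-i\theta})^n$, to rewrite
\[
\rho^{(1)}_{\beta,n}(\lambda) = \frac{1}{2\pi \binom{2n}{n}}\, E\!\left[(2\sin(\alpha_\lambda(0)/2))^{2n}\right].
\]
By Proposition \ref{prop:alphaSDE}, $\lambda\mapsto\alpha_\lambda(0)$ is a.s.\ analytic with $\alpha_0(0)=0$, so $\alpha_\lambda(0) = \lambda\phi(0) + O(\lambda^2)$ with $\phi:=\partial_\lambda\alpha_\lambda|_{\lambda=0}$. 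Since $(2\sin(x/2))^{2n}$ has a zero of order $2n$ at $x=0$ with leading term $x^{2n}$, the composition is pathwise $O(\lambda^{2n})$ with leading coefficient $\phi(0)^{2n}$; combined with the fact (from Corollary \ref{cor:int_case_rho}) that $\rho^{(1)}_{\beta,n}$ is entire and even in $\lambda$, this gives
\[
\rho^{(1)}_{\beta,n}(\lambda) = \frac{\lambda^{2n}}{2\pi \binom{2n}{n}}\, E[\phi(0)^{2n}] + O(\lambda^{2n+2}).
\]

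Next I would identify $\phi$ by linearizing \eqref{eq:alphaSDE} (with $\delta=n$) in $\lambda$: substituting $\alpha_\lambda = \lambda\phi + O(\lambda^2)$ and using $e^{-i\lambda\phi}-1 = -i\lambda\phi + O(\lambda^2)$, the order-$\lambda$ terms yield the linear SDE
\[
d\phi = \tfrac{\beta}{4}e^{\beta u/4}\,du + \phi\, d\mathbf{B}_2 - n\phi\, du, \qquad \lim_{u\to-\infty}\phi(u)=0.
\]
Solving with the integrating factor $M_u = e^{-\mathbf{B}_2(u) + (n+1/2) u}$ and reversing time via $W(t):=\mathbf{B}_2(0)-\mathbf{B}_2(-t)$ (a standard BM in $t\ge 0$) gives $\phi(0) = \tfrac{\beta}{4} X$ with $X := \int_0^\infty e^{W(t)-\mu t}\,dt$ and $\mu = n + \tfrac{1}{2} + \tfrac{\beta}{4}$. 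For this exponential functional, the classical moment recursion $E[X^k] = E[X^{k-1}]/(\mu-k/2)$ (obtained, for instance, by decomposing $X = \int_0^t e^{W_s - \mu s}\,ds + e^{W_t - \mu t}X'$ with $X' \ed X$ independent of $\mathcal{F}_t$, or from Dufresne's identity $X \ed 2/\gamma_{2\mu}$) yields $E[X^{2n}] = 2^{2n}/(1+\beta/2)^{\uparrow 2n}$ and hence
\[
E[\phi(0)^{2n}] = \frac{(\beta/2)^{2n}}{(1+\beta/2)^{\uparrow 2n}},
\]
which produces the leading coefficient claimed in \eqref{eq:small_lambda_HP}.

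The main obstacle is the rigorous justification of the expansion in the first paragraph, i.e.\ the bound $|E[(2\sin(\alpha_\lambda(0)/2))^{2n}] - \lambda^{2n} E[\phi(0)^{2n}]| = O(\lambda^{2n+2})$ uniformly for small $\lambda$. Since $(2\sin(\cdot/2))^{2n}$ together with all its derivatives is uniformly bounded on $\R$, this reduces via Fa\`a di Bruno to $L^p$ control of $\alpha_\lambda(0)$ and a few of its $\lambda$-derivatives near $\lambda=0$; such bounds should follow by differentiating \eqref{eq:alphaSDE} in $\lambda$ to get linear SDEs for the derivatives and applying standard stochastic estimates, together with the analyticity guaranteed by Proposition \ref{prop:alphaSDE}.
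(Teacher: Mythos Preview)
Your argument is correct and takes a genuinely different route from the paper. The paper works entirely through the power series representation of Corollary~\ref{cor:int_case_rho}: it shows, via the algebraic identities of Claim~\ref{cl:vAnBnf}, that $(\mathbf{I}-\gamma\mathbf{A}_n)^{-1}$ acts lower-triangularly on the flag $\{\mathbf{B}_n^\ell\mathbf{f}_n\}$, which forces $\mathbf{v}_n\cdot\mathbf{s}_{2\ell}=0$ for $\ell<n$ and lets one read off $\mathbf{v}_n\cdot\mathbf{s}_{2n}$ as a product of the diagonal entries~\eqref{eq:invBnf_coeff}. Your approach instead collapses the trigonometric sum \eqref{eq:rho_HP_n} into the single moment $E[(2\sin(\alpha_\lambda(0)/2))^{2n}]$, linearizes the SDE at $\lambda=0$, and identifies the resulting $\phi(0)$ as $\tfrac{\beta}{4}$ times a Dufresne exponential functional, whose $2n$th moment is explicit. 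This is more probabilistic and arguably more illuminating --- it explains the Pochhammer symbol in \eqref{eq:small_lambda_HP} as coming from inverse-gamma moments --- whereas the paper's approach is pure linear algebra on the coefficient recursion it has already built.

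The gap you flag is real but narrower than you suggest once you are willing to invoke Corollary~\ref{cor:int_case_rho} for analyticity and evenness (as you already do). Given those, it suffices to show (i) $E[(2\sin(\alpha_\lambda(0)/2))^{2n}]=O(\lambda^{2n})$ and (ii) $\lambda^{-2n}E[(2\sin(\alpha_\lambda(0)/2))^{2n}]\to E[\phi(0)^{2n}]$. Since $0\le(2\sin(x/2))^{2n}\le x^{2n}$, both reduce to a single uniform integrability statement: $\sup_{0<\lambda\le 1}E[(\alpha_\lambda(0)/\lambda)^{p}]<\infty$ for some $p>2n$. Writing $\psi_\lambda=\alpha_\lambda/\lambda$, the SDE for $\psi_\lambda$ has drift and diffusion coefficients bounded linearly in $\psi_\lambda$ (using $|\sin(\lambda\psi)/\lambda|\le|\psi|$ and $|(\cos(\lambda\psi)-1)/\lambda|\le|\lambda|\psi^2/2$), so standard Gronwall/BDG estimates on $[\nu,0]$ together with the monotone limit $\nu\to-\infty$ give this bound. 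Note, however, that your argument is not fully independent of the paper's machinery: analyticity of $\rho^{(1)}_{\beta,n}$ is supplied by Theorem~\ref{thm:even_beta}, so you are replacing only the computation of the leading coefficient, not the qualitative structure.
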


When $\beta=2n$, we can readily check that 
\[
 \binom{2n}{n}^{-1}\frac{\left(\tfrac{\beta}{2}\right)^{2n}}{\left(1+\frac{\beta}{2}\right)^{\uparrow 2n}} =\frac{n^{2n} (n!)^3 }{(2n!)(3n!) },
\]
which immediately implies Corollary \ref{cor:small_lambda}.

\begin{proof}[Proof of Theorem \ref{thm:even_beta}]
For a fixed $\nu<0$ consider the unique strong solution $\alpha_{\lambda,\nu}(u)$ of \eqref{eq:alphaSDE} on $[\nu,\infty)$ with initial condition $\alpha_{\lambda,\nu}(\nu)=0$ and $\delta=n$. Set 
\begin{align}
    q_{k,\nu}(u,\lambda)=E[e^{i k \alpha_{\lambda,\nu}(u)}].
\end{align}
Fix $\lambda\in \R$. By Proposition \ref{prop:alphaSDE} we have  $\lim\limits_{\nu\to -\infty} \alpha_{\lambda,\nu}(u)=\alpha_\lambda(u)$ almost surely, hence by the bounded convergence theorem we have 
\begin{align}
    \lim_{\nu\to -\infty}  q_{k,\nu}(0,\lambda)=q_k(\lambda). 
\end{align}
The process $e^{i k \alpha_{\lambda,\nu}(u)}$ satisfies the same SDE as $e^{i k \alpha_\lambda(u)}$, see \eqref{eq:QkSDE}. For a fixed $\lambda$, the coefficients of this SDE are bounded on $[\nu,0]$, hence the Brownian terms do not contribute when we take expectations in the integrated form of the equation. 
This leads to the following ODE system for $u\in [\nu,\infty)$.
\begin{align}\label{eq:ODEq1}
    \partial_u q_{1,\nu}&=\frac{n+1}{2}+(i \lambda \tfrac{\beta}{4}e^{\frac{\beta}{4}u}-1) q_{1,\nu}+\frac{1}{2}(1-n)q_{2,\nu},\\\label{eq:ODEq2}
    \partial_u q_{k,\nu}&=\frac{k}{2} (n +k) q_{k-1,\nu}+(-k^2 + i k \lambda \tfrac{\beta}{4}e^{\frac{\beta}{4}u})q_{k,\nu}+\frac{1}{2} k (k-n )q_{k+1,\nu}, \qquad 2\le k\le n,\\[3pt]
   &q_{k,\nu}(\nu,\lambda)=1, \qquad 1\le k\le n. 
\end{align}
We did not denote the dependence on $u,\lambda$ in \eqref{eq:ODEq1} and \eqref{eq:ODEq2}. 
Since the coefficient 
of $q_{n+1,\nu}$ in \eqref{eq:ODEq2} is 0,   we obtained a closed ODE system.
Using the notation
\[
\mathbf{q}_{\nu}(u,\lambda)=[q_{1,\nu}(u,\lambda), \dots, q_{n,\nu}(u,\lambda)]^T,
\]
we can write it as 
\begin{align}\label{eq:q_nu_ODE}
     \partial_u \mathbf{q}_\nu(u,\lambda) &= (i \tfrac{\beta}{4}e^{\frac{\beta}{4}u} \mathbf{B}_n \lambda  +\mathbf{A}_n) \mathbf{q}_\nu(u,\lambda) +\frac{n+1}{2}\mathbf{e}_n, \qquad
     \mathbf{q}_{\nu}(\nu,\lambda) = \mathbf{f}_n.
\end{align}
For a fixed $\lambda\in \CC$ the coefficients of this ODE system are bounded for $u\in [\nu,0]$, hence there is unique solution on $[\nu,0]$ for each $\lambda\in \CC$, and this solution is analytic in $\lambda$ for a fixed $u$. We claim that this unique solution is given by 
\begin{align}
    \mathbf{q}_\nu(u,\lambda)=\sum_{j=0}^\infty \mathbf{s}_{j,\nu}(u)\lambda^j,\label{eq:q_nu_series}
\end{align}
where the vector valued functions $\mathbf{s}_{j,\nu}(u),j\ge 0$ are  given by the recursion  
\begin{align}\label{eq:s_nu_def1}
    \mathbf{s}_{0,\nu}(u) &=\frac{n+1}{2}\int_\nu^u e^{(u-s)\mathbf{A}_n} \mathbf{e}_nds+e^{(u-\nu) \mathbf{A}_n }\cdot \mathbf{f}_n,\\
    \mathbf{s}_{k,\nu}(u) &= i \frac{\beta}{4}e^{\frac{\beta}{4} u}\int_\nu^ue^{n/2 (s-u)}e^{(u-s)\mathbf{A}_n}\mathbf{B}_n \mathbf{s}_{k-1,\nu}(s)ds.\label{eq:s_nu_def2}
\end{align}
Lemma \ref{lem:coeff_s} below shows that for $k\ge 0$ the $\CC^n$ valued function $\mathbf{s}_{k,\nu}(u)$ are bounded in norm by a $k$-dependent constant multiple of $e^{\frac{\beta}{4} k u}$, and that these functions satisfy the ODEs
\begin{align}\label{eq:coeff_s_ode1}
     \mathbf{s}_{0,\nu}'(u)&= \mathbf{A}_n  \mathbf{s}_{0,\nu}(u)+ \frac{n+1}{2}\mathbf{e}_n,\\
        \mathbf{s}_{k,\nu}'(u)&= \mathbf{A}_n \mathbf{s}_{k,\nu}(u)+ i \tfrac{\beta}{4}e^{\frac{\beta}{4}u}\mathbf{B}_n \mathbf{s}_{k-1,\nu}(u), \qquad k\ge 1.\label{eq:coeff_s_ode2}
\end{align}
From this it follows that the series given by  \eqref{eq:q_nu_series} can be differentiated in $u$ term-by-term and the resulting series is absolutely convergent. Moreover, the series in \eqref{eq:q_nu_series} satisfies the ODE system \eqref{eq:q_nu_ODE} and hence it is the unique solution of that system.  Lemma \ref{lem:coeff_s} also shows that for any $u>-\infty$ we have $\lim\limits_{\nu\to -\infty} \mathbf{s}_{k,\nu}(u)=e^{k \frac{\beta}{4}u} \mathbf{s}_{k}$ where $\mathbf{s}_{k}$ satisfies \eqref{eq:q_coeff_1}, and that 
\begin{align}\label{eq:q_nu_lim}
    \lim_{\nu\to -\infty} \mathbf{q}_{\nu}(u,\lambda)=\sum_{j=0}^\infty \mathbf{s}_j e^{\frac{\beta}{4} j u} \lambda^j.
\end{align}
From this, we obtain \eqref{eq:q_expansion}, and direct differentiation (which is allowed by Lemma \ref{lem:coeff_s}) gives \eqref{eq:ode_q_vec} as well.
\end{proof}

\begin{lemma}\label{lem:coeff_s} Consider the functions $ \mathbf{s}_{j,\nu}(u), j\ge 0, u\ge \nu$ defined in \eqref{eq:s_nu_def1}. For every $\nu\in \R$, these are well-defined, and they satisfy the  ODEs given in \eqref{eq:coeff_s_ode1} and   \eqref{eq:coeff_s_ode2}. Moreover, we have the following norm bounds for $\nu\le u$:
\begin{align}\label{eq:coeff_norm_bound}
        \|\mathbf{s}_{k,\nu}(u)\|\le e^{\frac{\beta}{4} k u} \,  \kappa^k \prod_{j=1}^k\frac{n}{j+\frac{4}{\beta}n},
\end{align}
with some $\kappa>0$ (which only depends on $n$). 
Moreover, these functions satisfy 
\begin{align}\label{eq:coeff_nu_lim}
    \lim\limits_{\nu\to -\infty} \mathbf{s}_{k,\nu}(u)=e^{k \frac{\beta}{4}u} \mathbf{s}_{k}.
\end{align}
With $\mathbf{q}_{\nu}(u)$ given by \eqref{eq:q_nu_series} we also have \eqref{eq:q_nu_lim}.
\end{lemma}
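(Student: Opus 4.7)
The plan is to induct on $k$, verifying in turn well-definedness, the ODEs \eqref{eq:coeff_s_ode1}--\eqref{eq:coeff_s_ode2}, the norm bound \eqref{eq:coeff_norm_bound}, and the limit \eqref{eq:coeff_nu_lim}; the series identity \eqref{eq:q_nu_lim} then follows by dominated convergence. Well-definedness is immediate since the integrands are continuous in $s$ by the inductive hypothesis. The ODEs come from differentiating the integral formulas under the integral sign: the $\mathbf{A}_n\mathbf{s}_{k,\nu}$ contribution arises from differentiating $e^{(u-s)\mathbf{A}_n}$ in $u$, while the inhomogeneous term comes from the upper limit $s=u$, at which the remaining factor equals precisely the right-hand side of \eqref{eq:coeff_s_ode2}.

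The heart of the argument is the inductive norm bound. Substituting the hypothesis for $\mathbf{s}_{k-1,\nu}$ into \eqref{eq:s_nu_def2} reduces matters to controlling the scalar-times-matrix integral
\begin{align*}
\int_\nu^u e^{(u-s)\mathbf{A}_n}\, e^{\tfrac{\beta}{4}ks}\,ds
\;=\; \tfrac{4}{\beta}\, e^{\tfrac{\beta}{4}ku}\Bigl(k\mathbf{I}-\tfrac{4}{\beta}\mathbf{A}_n\Bigr)^{-1}\Bigl(\mathbf{I}-e^{(u-\nu)(\mathbf{A}_n-\tfrac{\beta}{4}k\mathbf{I})}\Bigr).
\end{align*}
This reduces the task to two spectral facts about $\mathbf{A}_n$: (a) the semigroup $e^{t\mathbf{A}_n}$ is uniformly bounded for $t\ge 0$ in a norm adapted to the tridiagonal structure, and (b) the resolvent estimate $\|(k\mathbf{I}-\tfrac{4}{\beta}\mathbf{A}_n)^{-1}\mathbf{B}_n\|\le \tfrac{n}{k+\tfrac{4}{\beta}n}$ holds in the same norm for all $k\ge 1$. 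Together they propagate the product $\prod_{j=1}^k\tfrac{n}{j+\tfrac{4}{\beta}n}$ through the induction, and switching back to the Euclidean norm absorbs any equivalence constant into $\kappa^k$. The base case $k=0$ uses only (a), applied to the two explicit summands in \eqref{eq:s_nu_def1}.

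Granting the norm bound, the limit \eqref{eq:coeff_nu_lim} follows by dominated convergence: the transient $e^{(u-\nu)(\mathbf{A}_n-\tfrac{\beta}{4}k\mathbf{I})}$ vanishes as $\nu\to-\infty$ (by dissipativity combined with the positive shift $\tfrac{\beta}{4}k$ for $k\ge 1$, and by invertibility of $\mathbf{A}_n$ for $k=0$), yielding
\begin{align*}
\lim_{\nu\to-\infty}\mathbf{s}_{k,\nu}(u) \;=\; i\,e^{\tfrac{\beta}{4}ku}\bigl(k\mathbf{I}-\tfrac{4}{\beta}\mathbf{A}_n\bigr)^{-1}\mathbf{B}_n\,\mathbf{s}_{k-1} \;=\; e^{\tfrac{\beta}{4}ku}\mathbf{s}_k,
\end{align*}
which matches the recursion \eqref{eq:q_coeff_1} (and the $k=0$ case recovers $-\tfrac{n+1}{2}\mathbf{A}_n^{-1}\mathbf{e}_n=\mathbf{s}_0$ directly). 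The absolute convergence of $\sum_j \mathbf{s}_j e^{\tfrac{\beta}{4}ju}\lambda^j$ for every $(u,\lambda)\in\R\times\CC$ is then immediate from \eqref{eq:coeff_norm_bound}, since $\prod_{j=1}^k\tfrac{n}{j+\tfrac{4}{\beta}n}$ decays faster than $1/k!$ by Stirling; this yields \eqref{eq:q_nu_lim}.

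The main obstacle is establishing the two spectral facts (a) and (b) about $\mathbf{A}_n$: identifying a norm in which $e^{t\mathbf{A}_n}$ is a near-contraction and the sharp resolvent bound holds simultaneously. The off-diagonal entries $\tfrac12 k(k\pm n)$ are reminiscent of Jacobi-type recurrence coefficients, suggesting that a suitable weighted inner product, possibly with complex weights (reflecting that $\mathbf{A}_n$ is genuinely non-self-adjoint when $n\ge 2$, since $(k-n)<0$ for $k<n$ while $(k+n)>0$), should symmetrize $\mathbf{A}_n$ and make the spectral picture explicit. Once that weighted norm is in place, everything downstream is routine bookkeeping.
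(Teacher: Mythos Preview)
Your overall plan (induction, ODEs by differentiation, dominated convergence for the limits) is right and matches the paper. The problem is in the heart of the argument, the inductive norm bound. The closed-form identity you display for $\int_\nu^u e^{(u-s)\mathbf{A}_n}e^{\frac{\beta}{4}ks}\,ds$ is correct as an identity, but it cannot be applied to the recursion \eqref{eq:s_nu_def2}: the integrand there is $e^{(u-s)\mathbf{A}_n}\mathbf{B}_n\,\mathbf{s}_{k-1,\nu}(s)$, and for finite $\nu$ the vector $\mathbf{s}_{k-1,\nu}(s)$ is \emph{not} a pure exponential $e^{\frac{\beta}{4}(k-1)s}$ times a constant vector (that is only the $\nu\to-\infty$ limit). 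So you cannot pull the integral through to the resolvent, and the reduction to your spectral facts (a) and (b) does not go through as written. Even granting a charitable reading where you take norms first, your (a) as stated---mere uniform boundedness of $e^{t\mathbf{A}_n}$---is too weak: it would only give a factor $\frac{n}{k}$ per step, not the stated $\frac{n}{k+\frac{4}{\beta}n}$.

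The paper's route avoids this obstacle entirely, and in particular makes your ``main obstacle'' (finding a weighted norm in which the sharp resolvent bound (b) holds) unnecessary. The key input is Lemma~\ref{lem:An_eval}: the eigenvalues of $\mathbf{A}_n$ are all $\le -n$, so $\|e^{t\mathbf{A}_n}\|\le \kappa e^{-nt}$ for $t\ge 0$ in the ordinary norm (with a single constant $\kappa$ depending on $n$). Combined with the trivial $\|\mathbf{B}_n\|\le n$, one simply pulls the norm inside the integral in \eqref{eq:s_nu_def2} and computes the resulting scalar integral:
\[
\|\mathbf{s}_{k,\nu}(u)\|\le \kappa n\,\tfrac{\beta}{4}\,e^{\frac{\beta}{4}u}\int_\nu^u e^{-(u-s)(\frac{\beta}{4}+n)}\,C_{k-1}e^{\frac{\beta}{4}(k-1)s}\,ds
\le \kappa\,\frac{n}{k+\frac{4}{\beta}n}\,C_{k-1}\,e^{\frac{\beta}{4}ku},
\]
which is exactly the inductive step for \eqref{eq:coeff_norm_bound}. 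No symmetrizing inner product or resolvent estimate is needed; the decay rate $e^{-nt}$ of the semigroup is what produces the $\frac{4}{\beta}n$ in the denominator. Your limit computation and the dominated-convergence argument for \eqref{eq:q_nu_lim} are fine once this bound is in place.
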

We will also need the following lemma.
\begin{lemma}\label{lem:An_eval}
The matrix $\mathbf{A}_n$ has  $n$ distinct  negative eigenvalues given by  $\gamma_{k,n}=\frac{k(k-1)}{2}-\frac{n(n+1)}{2}$, $1\le k\le n$. 
\end{lemma}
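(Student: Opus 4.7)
The plan is to realize the matrix $\mathbf{A}_n$ as the matrix of a linear operator $L$ acting on polynomials, and then observe that $L$ is triangular in the monomial basis so its eigenvalues can be read off directly. Define
\[
(Lf)(x) = \tfrac{x(x+n)}{2} f(x-1) - x^2 f(x) + \tfrac{x(x-n)}{2} f(x+1)
\]
for any polynomial $f$. Every term on the right carries a factor of $x$, so $(Lf)(0) = 0$ automatically. Setting $f(x) = x^p$ and expanding $(x\pm 1)^p$ by the binomial theorem, the coefficients of $x^{p+2}$ and $x^{p+1}$ in $Lx^p$ both cancel (the former from $\tfrac{1}{2}+\tfrac{1}{2}-1$, the latter from opposite-sign $\pm p$ and $\pm n$ contributions), and the coefficient of $x^p$ evaluates to $\tfrac{p(p-1-2n)}{2}$. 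Hence $L$ preserves the $n$-dimensional subspace $V_n = \operatorname{span}\{x, x^2, \ldots, x^n\}$, and in the monomial basis $L\big|_{V_n}$ is upper triangular with diagonal entries $\mu_p := \tfrac{p(p-1-2n)}{2}$ for $p = 1, \ldots, n$.

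Next, identify $L\big|_{V_n}$ with $\mathbf{A}_n$ via the evaluation isomorphism $V_n \to \R^n$, $f \mapsto (f(1), \ldots, f(n))$, whose inverse is polynomial interpolation of degree $\le n$ subject to the extra constraint $f(0) = 0$. For $2 \le k \le n-1$ the defining recurrence of $\mathbf{A}_n$ applied to $(f(1),\ldots,f(n))$ is exactly $(Lf)(k)$; at $k = 1$ the missing subdiagonal entry is absorbed by $f(0) = 0$; and at $k = n$ the missing superdiagonal is immaterial because the coefficient $\tfrac{n(n-n)}{2}$ vanishes. Conversely, if $v \in \R^n$ satisfies $\mathbf{A}_n v = \mu v$ and $f \in V_n$ interpolates it, then $Lf - \mu f$ is a polynomial of degree $\le n$ vanishing at $x = 0, 1, \ldots, n$, hence identically zero. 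So $\spec(\mathbf{A}_n) = \{\mu_1, \ldots, \mu_n\}$.

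Finally, the algebraic identity $p(p-1-2n) + n(n+1) = (n-p)(n-p+1)$ rewrites $\mu_p$ as $\tfrac{k(k-1) - n(n+1)}{2} = \gamma_{k,n}$ with $k = n + 1 - p$. As $p$ ranges over $\{1,\ldots,n\}$ so does $k$, and the values are pairwise distinct since $k \mapsto k(k-1)$ is strictly increasing on $[1,\infty)$, and strictly negative since $k(k-1) \le n(n-1) < n(n+1)$ for $k \le n$. The main obstacle is the first step, namely recognizing that $\mathbf{A}_n$ is the restriction of a natural polynomial operator in which triangularity appears for free; once that viewpoint is in place, the rest is routine binomial bookkeeping and handling of the two boundary rows.
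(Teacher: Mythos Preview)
Your proof is correct, and it takes a genuinely different route from the paper's. The paper constructs an explicit change-of-basis matrix $\mathbf{T}_n$ with entries $(-1)^b\binom{a}{b}$, proves $\mathbf{T}_n^2=\mathbf{I}$, and then verifies via the Chu--Vandermonde identity that $\mathbf{T}_n\mathbf{A}_n\mathbf{T}_n$ is upper bidiagonal with the desired diagonal. You instead interpret $\mathbf{A}_n$ as the matrix, in the evaluation basis $f\mapsto(f(1),\ldots,f(n))$, of the second-order difference operator $L$ acting on the space $V_n=\operatorname{span}\{x,\ldots,x^n\}$, and observe that $L$ is upper triangular in the monomial basis with diagonal entries $\mu_p=\tfrac{p(p-1-2n)}{2}$. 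Both arguments triangularize $\mathbf{A}_n$ by passing to a polynomial basis, but the paper uses the (signed) binomial polynomials $(-1)^b\binom{x}{b}$ while you use monomials. Your approach is more conceptual and avoids invoking Chu--Vandermonde: only the binomial expansion of $(x\pm1)^p$ is needed to see the two top coefficients cancel. The paper's route, on the other hand, yields a sharper normal form (bidiagonal rather than merely triangular), and the explicit conjugator $\mathbf{T}_n$ may be useful elsewhere. Your ``Conversely'' paragraph is logically redundant once you know the evaluation map is an isomorphism intertwining $L$ and $\mathbf{A}_n$, but it does no harm.
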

\begin{proof}
 Our starting point is the following version of the Chu-Vandermonde identity that holds for non-negative integers $a, b, r$ (see e.g.~(5.24) in \cite{Concrete}
).
 \begin{align}\label{eq:chu-v}
     \sum_{k=0}^a (-1)^k \binom{a}{k}\binom{k+r}{b}=(-1)^a \binom{r}{b-r}.
 \end{align}
Here $\binom{x}{y}=0$ if $y<0$ or $x<y$. Now consider the $n\times n$ matrix $\mathbf{T}_n$ defined as $[\mathbf{T}_n]_{a,b}=\binom{a}{b}(-1)^b$, note that this is a lower triangular matrix. Identity \eqref{eq:chu-v} with $r=0$ shows that $\mathbf{T}_n=\mathbf{T}_n^{-1}$. We claim that $\mathbf{T}_n \mathbf{A}_n \mathbf{T}_n$ is an upper bidiagonal matrix with entries
\begin{align}\label{eq:TAT}
 [\mathbf{T}_n \mathbf{A}_n \mathbf{T}_n]_{k,k}= \frac{k(k-1)}{2}-k n, \qquad [\mathbf{T}_n \mathbf{A}_n \mathbf{T}_n]_{k,k+1}=\frac{k(n-k)}{2}
\end{align}
 in the $k$th row. Since  $  \gamma_{n+1-k,n}=\frac{k(k-1)}{2}-k n$, from this the statement of the lemma follows.\\
To prove \eqref{eq:TAT} we start with the expression for the  $(\ell,m)$ entry of $\mathbf{T}_n\mathbf{A}_n\mathbf{T}_n$:
\begin{align}\label{eq:TAT1}
    [\mathbf{T}_n\mathbf{A}_n\mathbf{T}_n]_{\ell,m}=\sum_{j=1}^n t_{\ell,j} (a_{j,j} t_{j,m}+a_{j,j-1}t_{j-1,m}+a_{j,j+1}t_{j+1,m}). 
\end{align}
Here $t_{\ell,m}, a_{\ell,m}$ are the respective entries of $\mathbf{T}_n$ and $\mathbf{A}_n$, and they are defined to be zero if we do not have $1\le \ell, m\le n$. 
Using simple identities of binomial coefficients, we can rewrite each term in \eqref{eq:TAT1} as linear combinations of expressions of the form
$(-1)^j\binom{\ell}{j} \binom{j+r}{m}$ with finitely many choices for $r$. By applying  \eqref{eq:chu-v} to each of these terms, a somewhat lengthy but straightforward calculation shows that $\mathbf{T}_n\mathbf{A}_n\mathbf{T}_n$ is indeed upper bidiagonal with the claimed entries. 
\end{proof}

\begin{proof}[Proof of Lemma \ref{lem:coeff_s}]
The ODEs \eqref{eq:coeff_s_ode1} and   \eqref{eq:coeff_s_ode2} can be checked directly by differentiation from \eqref{eq:s_nu_def1} and \eqref{eq:s_nu_def2}.
 From Lemma \ref{lem:An_eval} it follows that the spectral radius of $e^{x \mathbf{A}_n}$ with $x\ge 0$ is $e^{-n x}$. This implies that there is a finite constant  $\kappa>0$ (depending only on $n$) so that
 \[
 \| e^{x \mathbf{A}_n} \mathbf{v}\|\le \kappa e^{-n x} \|\mathbf{v}\|.
 \]
The spectral radius of $\mathbf{B}_n$ is $n$, hence we get $\|\mathbf{B}_n \mathbf{v}\|\le n  \|\mathbf{v}\|$.
From \eqref{eq:s_nu_def1} and \eqref{eq:s_nu_def2}  we get
\begin{align}\label{eq:s_norm_bnd0}
    \|\mathbf{s}_{0,\nu}(u)\|&\le \kappa \frac{n+1}{2n}+\kappa e^{-(u-\nu)n} \|\mathbf{f}_n\|,\\
    \|\mathbf{s}_{k,\nu}(u)\|&\le \kappa n \frac{\beta}{4} e^{\frac{\beta}{4}u} \int_{\nu}^u e^{(s-u)(\frac{\beta}{4}+n)} \|\mathbf{s}_{k-1,\nu}(s)\| ds.
\end{align}
From \eqref{eq:s_norm_bnd0} it follows that $\|\mathbf{s}_{0,\nu}(u)\|$ is bounded by a constant uniformly in $\nu$, and by induction, it follows that there is a finite $\kappa_0>0$ so that 
\begin{align}\label{eq:s_nu_norm_bnd}
    \|\mathbf{s}_{k,\nu}(u)\|\le e^{\frac{\beta}{4} k u} \, \kappa_0 \kappa^k \prod_{j=1}^k\frac{n}{j+\frac{4}{\beta}n} .
\end{align}
Increasing the value of $\kappa$ gives the bound \eqref{eq:coeff_norm_bound}. 
Taking the limit $\nu\to -\infty$ in \eqref{eq:s_nu_def1} gives
\begin{align*}
  \lim_{\nu\to -\infty}   \mathbf{s}_{0,\nu}(u)&=\frac{n+1}{2}\int_{-\infty}^u e^{(u-s)\mathbf{A}_n} \mathbf{e}_nds+\lim_{\nu\to -\infty} e^{(u-\nu) \mathbf{A}_n }\mathbf{f}_n\\
  &=\frac{n+1}{2}\int_{-\infty}^0 e^{t \mathbf{A}_n} \mathbf{e}_ndt=-\frac{n+1}{2} \mathbf{A}_n^{-1} \mathbf{e}_n,
\end{align*}
proving the $k=0$ case of \eqref{eq:coeff_nu_lim}. 
Note that  
 $\mathbf{A}_n^{-1}$ is well-defined by Lemma \ref{lem:An_eval}. We can now prove  \eqref{eq:coeff_nu_lim} for general $k\ge 1$  by induction. Assume that \eqref{eq:coeff_nu_lim} holds for $k-1$ for some $k\ge 1$. 
By \eqref{eq:s_nu_norm_bnd} we can use dominated convergence to get
\begin{align*}
  \lim_{\nu\to -\infty}   \mathbf{s}_{k,\nu}(u)&=i \frac{\beta}{4}e^{\frac{\beta}{4} u}\int_{-\infty}^ue^{\beta/4 (s-u)}e^{(u-s)\mathbf{A}_n}\mathbf{B}_n e^{(k-1) \frac{\beta}{4}s}\mathbf{s}_{k-1}ds\\
  &=i \frac{\beta}{4}e^{\frac{k \beta}{4} u}\int_{-\infty}^ue^{k \frac{\beta}{4} (s-u)}e^{(u-s)\mathbf{A}_n}\mathbf{B}_n \mathbf{s}_{k-1}ds=i \frac{\beta}{4}e^{\frac{k \beta}{4} u}\int_{-\infty}^0e^{(-k \frac{\beta}{4}+\mathbf{A}_n) t }dt \, \mathbf{B}_n\mathbf{s}_{k-1}
\\&=i e^{\frac{k \beta}{4} u}\left(k-\tfrac{4}{\beta}\mathbf{A}_n\right)^{-1} \, \mathbf{B}_n\mathbf{s}_{k-1}.
\end{align*}
Again, $\left(k \mathbf{I}-\tfrac{4}{\beta}\mathbf{A}_n\right)^{-1}$ is well-defined because of Lemma \ref{lem:An_eval}. This concludes the proof of \eqref{eq:coeff_nu_lim}. Finally, we use the norm bound \eqref{eq:coeff_norm_bound} again to conclude that 
\[
 \lim_{\nu\to -\infty} \mathbf{q}_{\nu}(u,\lambda)=
\sum_{j=0}^\infty \lim_{\nu\to -\infty} \mathbf{s}_{j,\nu}(u) \lambda^j
 =
 \sum_{j=0}^\infty \mathbf{s}_j e^{\frac{\beta}{4} j u} \lambda^j,
\]
proving \eqref{eq:q_nu_lim}.
\end{proof}
To obtain Corollary \ref{cor:int_case_rho} and Proposition \ref{prop:small_lambda_asym}, we prove the following claim. 

\begin{claim} We have the following identities:
   \label{cl:vAnBnf}
    \begin{align}
      \label{eq:AnBnf}
            &\mathbf{s}_0= \mathbf{f}_n, \\
            &\mathbf{A}_n \mathbf{B}^{\ell}_n \mathbf{f}_n=\sum_{j=0}^{\lfloor \frac{\ell-1}{2}\rfloor} \left(\tbinom{\ell}{2j+2}-n\tbinom{\ell}{2j+1}\right)\mathbf{B}^{\ell-2j}_n\mathbf{f}_n, \quad \text{ for all $1\le \ell \le 2n$}
           \label{eq:AnBnf_2}
          \\
          \label{eq:vBnf}
          &\mathbf{v}_n\cdot \mathbf{B}^{2j}_n \mathbf{f}_n = 0,\;\; \text{ for all $1\le j<n$, \quad and }\qquad \mathbf{v}_n\cdot \mathbf{B}^{2n}_n \mathbf{f}_n = (-1)^n \binom{2n}{n}^{-1}\frac{(2n)!}{2}.
    \end{align}
    \end{claim}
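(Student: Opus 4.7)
\emph{Plan.} The three identities can be handled separately. For \eqref{eq:AnBnf}, since $\mathbf{A}_n$ is invertible by Lemma~\ref{lem:An_eval}, the claim $\mathbf{s}_0 = \mathbf{f}_n$ reduces to verifying $\mathbf{A}_n \mathbf{f}_n = -\tfrac{n+1}{2}\mathbf{e}_n$. This follows from a row-by-row inspection of the tridiagonal matrix in \eqref{eq:AB}: each interior row sums to $\tfrac{k(k+n)}{2} - k^2 + \tfrac{k(k-n)}{2} = 0$; the last row sums to $\tfrac{n\cdot 2n}{2} - n^2 = 0$ (the missing super-diagonal being consistent with $\tfrac{n(n-n)}{2} = 0$); and the first row sums to $-1 + \tfrac{1-n}{2} = -\tfrac{n+1}{2}$.

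For \eqref{eq:AnBnf_2}, I would use that $[\mathbf{B}_n^\ell \mathbf{f}_n]_k = k^\ell$. For every $k \in \{1, \dots, n\}$, the $k$-th entry of $\mathbf{A}_n \mathbf{B}_n^\ell \mathbf{f}_n$ equals
\[
\tfrac{k(k+n)}{2}(k-1)^\ell - k^{\ell+2} + \tfrac{k(k-n)}{2}(k+1)^\ell,
\]
the two boundary cases $k=1$ and $k=n$ being harmless because $(k-1)^\ell = 0$ at $k=1$ and $\tfrac{k(k-n)}{2} = 0$ at $k=n$ respectively kill the missing neighbour. Next I would expand $(k \pm 1)^\ell$ by the binomial theorem and separate even and odd powers of $k$ through
\[
(k+1)^\ell + (k-1)^\ell = 2\sum_{i \ge 0} \tbinom{\ell}{2i} k^{\ell-2i}, \qquad (k+1)^\ell - (k-1)^\ell = 2\sum_{i \ge 0} \tbinom{\ell}{2i+1} k^{\ell-2i-1}.
\]
Substituting into the previous display, the $k^{\ell+2}$ contributions cancel, and a single shift of summation index produces the stated coefficients $\binom{\ell}{2j+2} - n\binom{\ell}{2j+1}$ in front of $k^{\ell-2j} = [\mathbf{B}_n^{\ell-2j}\mathbf{f}_n]_k$. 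The convention $\binom{\ell}{m} = 0$ for $m > \ell$ lets one close the outer sum at $j = \lfloor (\ell-1)/2 \rfloor$ uniformly in the parity of $\ell$.

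For \eqref{eq:vBnf}, set $c_k = \binom{2n}{n+k}$, so that $[\mathbf{v}_n]_k = (-1)^k c_k / \binom{2n}{n}$. The palindromic symmetry $c_k = c_{-k}$, together with $(-1)^k = (-1)^{-k}$ and the evenness of $k \mapsto k^{2j}$, makes $k \mapsto (-1)^k c_k k^{2j}$ an even function of $k$; since the $k=0$ term vanishes for $j \ge 1$, this yields
\[
2 \binom{2n}{n} \, \mathbf{v}_n \cdot \mathbf{B}_n^{2j} \mathbf{f}_n = \sum_{k=-n}^n (-1)^k c_k k^{2j}.
\]
The shift $k \mapsto k - n$ rewrites the right-hand side as $(-1)^n \, \Delta^{2n}[x^{2j}](-n)$, where $\Delta$ denotes the standard forward finite difference operator. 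Since $\Delta^{2n}$ annihilates every polynomial of degree less than $2n$ and sends $x^{2n}$ to the constant $(2n)!$, this expression is $0$ for $1 \le j < n$ and $(-1)^n (2n)!$ for $j = n$; dividing by $2 \binom{2n}{n}$ then yields both halves of \eqref{eq:vBnf}. The main obstacle across the three parts is the binomial bookkeeping in \eqref{eq:AnBnf_2}, where keeping track of the parity of $\ell$ in the reindexing requires care; by contrast, \eqref{eq:vBnf} is conceptually the cleanest step once the alternating binomial sum is recognized as a $2n$-th finite difference.
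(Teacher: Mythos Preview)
Your proposal is correct and follows essentially the same approach as the paper's own proof: both verify $\mathbf{A}_n\mathbf{f}_n=-\tfrac{n+1}{2}\mathbf{e}_n$ for \eqref{eq:AnBnf}, both compute $[\mathbf{A}_n\mathbf{B}_n^\ell\mathbf{f}_n]_k=\tfrac{k(k+n)}{2}(k-1)^\ell-k^{\ell+2}+\tfrac{k(k-n)}{2}(k+1)^\ell$ and expand $(k\pm 1)^\ell$ binomially for \eqref{eq:AnBnf_2}, and both symmetrize the sum and recognize it as a $2n$-th forward difference of a degree-$2j$ polynomial for \eqref{eq:vBnf}. Your write-up supplies a little more detail (the explicit even/odd splitting and the index shift), but there is no substantive difference in strategy.
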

\begin{proof}
From \eqref{eq:AB} and \eqref{eq:f} we have, 
\begin{align}\label{eq:AB1}
[\mathbf{B}^{\ell}_n \mathbf{f}_n]_k &= k^{\ell}, \quad
[\mathbf{A}_n \mathbf{f}_n]_k =-\frac{n+1}{2} \ind(k=1),
\qquad 1\le k\le n,\\
    [\mathbf{A}_n \mathbf{B}^\ell_n\mathbf{f}_n]_k&=\frac{k(k+n)}{2}(k-1)^\ell-k^{2+\ell}+\frac{k(k-n)}{2}(k+1)^\ell, \qquad 1\le k\le n.\label{eq:AB2}
\end{align}
From \eqref{eq:AB1} we have $\mathbf{A}_n \mathbf{f}_n=-\tfrac{n+1}{2}\mathbf{e}_n$, hence from \eqref{eq:q_coeff_1} we obtain  \eqref{eq:AnBnf}. Expanding $(k\pm 1)^\ell$ and checking the coefficients of $k$ yields \eqref{eq:AnBnf_2}. \\
To prove \eqref{eq:vBnf}, we start with the identity
    \begin{align*}
        \mathbf{v}_n\cdot \mathbf{B}^{2 j}_n \mathbf{f}_n  = \tbinom{2n}{n}^{-1} \sum_{k=1}^n (-1)^k \tbinom{2n}{n+k} k^{2j}  = \frac12 (-1)^n\tbinom{2n}{n}^{-1} \sum_{k=0}^{2n}(-1)^{2n-k} \binom{2n}{k} (k-n)^{2j}.
    \end{align*}
The sum on the right is just the result of the finite difference operator \[\Delta f(x)=f(x+1)-f(x)\] applied $2n$ times to the function  $f(x)=(x-n)^{2j}$, and then evaluated at $x=0$. 
Applying $\Delta^{2n}$ to the polynomial $x^{2n}$ yields the constant $(2n)!$, and applying the same operator to any polynomial with degree less than $2n$ yields the constant 0.  From this, we obtain \eqref{eq:vBnf}.
\end{proof}

\begin{proof}[Proof of Corollary \ref{cor:int_case_rho}]
The equation 
\[
\rho^{(1)}_{\beta,n}(\lambda) =\frac{1}{2\pi}\left(1+2\;\mathbf{v}_n^T\Re \mathbf{q}(\lambda)\right)
\]
follows from \eqref{eq:AnBnf} and Theorem \ref{thm:even_beta}. To prove the second part of \eqref{eq:int_case_rho}, we note that from \eqref{cl:vAnBnf} we have $\mathbf{s}_0 = \mathbf{f}_n$. From $(1-1)^{2n} = 0$ we also get 
\[
\mathbf{v}_n \cdot \mathbf{s}_0 = \mathbf{v}_n \cdot \mathbf{f}_n = \sum_{k=1}^n (-1)^k \frac{\binom{2n}{n+k}}{\binom{2n}{n}} = -\frac{1}{2}.
\]
From the definition of $\mathbf{s}_j$ it follows that  $\mathbf{s}_j$ is pure imaginary when j is odd, and real when j is even. Hence we get
\begin{align*}
      &\rho^{(1)}_{\beta,n}(\lambda)  =\frac{1}{2\pi}\left(1+2\;\mathbf{v}_n^T\Re \mathbf{q}(\lambda)\right)=\frac{1}{2\pi}(1+ 2\sum_{j=0}^\infty \mathbf{v}_n\cdot \mathbf{s}_{2j} \lambda^{2j}) =\frac{1}{\pi} \sum_{j=1}^\infty \mathbf{v}_n\cdot \mathbf{s}_{2j} \lambda^{2j},
\end{align*}
which completes the proof. 
\end{proof}

\begin{proof}[Proof of Proposition \ref{prop:small_lambda_asym}]
    Following \eqref{eq:int_case_rho}, it suffices to show that
     \begin{align}
         \mathbf{v}\cdot\mathbf{s}_{2\ell}=0,\quad \text{for $1\le \ell\le n-1$},\quad\text{and} \quad 
         \mathbf{v}\cdot\mathbf{s}_{2n}=   \frac12 \binom{2n}{n}^{-1}\frac{\left(\tfrac{\beta}{2}\right)^{2n}}{\left(1+\frac{\beta}{2}\right)^{\uparrow 2n}}.
     \end{align}
Fix $1\le \ell\le 2n$. Consider the vector space $\mathcal{V}_\ell$ spanned by the vectors $\mathbf{B}_n^{\ell-2j}\mathbf{f}_n$, with $j=0,\dots,  \lfloor\frac{\ell-1}{2}\rfloor$.
Equation \eqref{eq:AnBnf_2} shows that $\mathbf{A}_n$ maps $\mathcal{V}_\ell$ to itself and acts as a lower-triangular matrix with respect to this basis. This implies that for $\gamma>0$ the mapping $\mathbf{I}-\gamma \mathbf{A}_n: \mathcal{V}_\ell \mapsto \mathcal{V}_\ell$ is invertible, and the inverse is a lower triangular matrix with respect to our basis. 
In other words, there exist constants $ c_{\ell,\gamma, m, n} $ such that,
\begin{align}
    \label{eq:invBnf}
    (\mathbf{I}-\gamma \mathbf{A}_n)^{-1}   \mathbf{B}^{\ell}_n   \mathbf{f}_n = \sum_{m=0}^{\lfloor \frac{\ell-1}{2} \rfloor} c_{\ell,\gamma, m, n}  \mathbf{B}^{\ell-2m}_n \mathbf{f}_n.
\end{align}
Multiplying both sides by $(\mathbf{I}-\gamma \mathbf{A}_n)$ and matching the corresponding coefficients of $\mathbf{B}_n^{\ell} \mathbf{f}_n$ term, we get
\begin{align} \label{eq:invBnf_coeff}
 c_{\ell,\gamma, 0, n}=(1+ \gamma \ell (n-\tfrac{\ell-1}{2}))^{-1}.   
\end{align}
 By definition, we have for all $1\leq \ell \leq n,$ 
\begin{align*}
    \mathbf{s}_{2\ell} &= (-1)^{\ell} 
(2\ell \,\mathbf{I}-\tfrac{4}{\beta} \mathbf{A}_n)^{-1}\mathbf{B}_n ((2\ell-1) \mathbf{I}-\tfrac{4}{\beta} \mathbf{A}_n)^{-1}\mathbf{B}_n\cdots ( \mathbf{I}-\tfrac{4}{\beta} \mathbf{A}_n)^{-1}\mathbf{B}_n \mathbf{f}_n\\
 &= (-1)^{\ell} \frac{1}{(2\ell)!}
( \mathbf{I}-\tfrac{4}{\beta(2\ell)} \mathbf{A}_n)^{-1}\mathbf{B}_n (\mathbf{I}-\tfrac{4}{\beta (2\ell-1)} \mathbf{A}_n)^{-1}\mathbf{B}_n\cdots ( \mathbf{I}-\tfrac{4}{\beta} \mathbf{A}_n)^{-1}\mathbf{B}_n \mathbf{f}_n.
   \end{align*}
Applying  \eqref{eq:invBnf} repeatedly (from the right), we obtain that 
\[
  \mathbf{s}_{2\ell} =(-1)^\ell \frac{1}{(2\ell)!}\prod_{k=1}^{2\ell} c_{k,\frac{4}{\beta k },0,n} \,\mathbf{B}_n^{2\ell} \mathbf{f}_n + \sum_{k=1}^{\ell-1} \tilde{c}_{\ell, k, n} \, \mathbf{B}_n^{2\ell-2k} \mathbf{f}_n,
\]
with some finite coefficients  $\tilde{c}_{\ell, k, n}$.
Finally, by \eqref{eq:vBnf} and \eqref{eq:invBnf_coeff}, we conclude that
\begin{align*}
     \mathbf{v}_n\cdot \mathbf{s}_{2\ell} = 0 \quad \text{for } 1\le \ell\le n-1,
\end{align*}
and
\begin{align*}
    \mathbf{v}_n\cdot \mathbf{s}_{2n}
    = \frac{1}{2}\binom{2n}{n}^{-1} \prod_{j=1}^{2n}\left(1+\tfrac{4}{\beta}\Big(n-\tfrac{j-1}{2}\Big)\right)^{-1}.
\end{align*}
The last constant can also be written as 
\[
\frac12 \binom{2n}{n}^{-1}\frac{(\tfrac{\beta}{2})^{2n}}{\left(1+\frac{\beta}{2}\right)^{\uparrow 2n}},
\]
which completes the proof of Proposition \ref{prop:small_lambda_asym}.
\end{proof}

\section{Additional results}\label{sec:additional}

\subsection{ODE system for general $\beta>0$}

Theorem \ref{thm:even_beta} has the following extension. 

\begin{proposition}\label{prop:ODE_gen}
Fix $\beta>0, \delta>0$. Let $q_k(\lambda)=E[e^{i k \alpha_{\lambda}(0)}]$ where $\alpha_\lambda(u)$ is the strong solution of \eqref{eq:alphaSDE} with initial condition \eqref{eq:SDE_initial}. Then $q_k(\lambda)$ is differentiable for $\lambda\in \R$, $\lambda\neq 0$, and we have
\begin{align}\label{eq:genODE1}
     \tfrac{\beta}{4}\lambda q_{1}'(\lambda)&=\tfrac{1}{2}(1+\delta)+(i \lambda \tfrac{\beta}{4}-1) q_{1}(\lambda)+\tfrac{1}{2}(1-\delta)q_{2}(\lambda),\\\label{eq:genODE2}
   \tfrac{\beta}{4} \lambda q'_{k}(\lambda)&=\tfrac{1}{2} k(k+\delta) q_{k-1}(\lambda)+(i k \lambda \tfrac{\beta}{4}-k^2)q_{k}(\lambda)+\tfrac{1}{2} k (k-\delta )q_{k+1}(\lambda), \qquad 2\le k,\\
    \lim_{\lambda\to 0}q_{k}(\lambda)&=1. \label{eq:genODE3}
\end{align}    
\end{proposition}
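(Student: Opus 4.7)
The strategy is to mirror the derivation used in the proof of Theorem \ref{thm:even_beta}, but without the simplification provided by the truncation of the system. The main idea is to first obtain a $u$-ODE for $Q_k(u,\lambda):=E[e^{i k \alpha_\lambda(u)}]$ by taking expectations in the It\^o expansion \eqref{eq:QkSDE}, and then to convert it into a $\lambda$-ODE using the scale invariance \eqref{eq:scale_inv}.

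First, for $\nu<0$ let $\alpha_{\lambda,\nu}$ be the solution on $[\nu,\infty)$ with $\alpha_{\lambda,\nu}(\nu)=0$, and set $q_{k,\nu}(u,\lambda)=E[e^{ik\alpha_{\lambda,\nu}(u)}]$. Since the drift and diffusion coefficients of the process $e^{ik\alpha_{\lambda,\nu}}$ from \eqref{eq:QkSDE} are bounded on any compact $u$-interval, the martingale parts integrate to mean zero and we obtain the infinite ODE system
\begin{align*}
\partial_u q_{k,\nu} = \tfrac{k(k+\delta)}{2}\, q_{k-1,\nu} + \bigl(ik\lambda\tfrac{\beta}{4}e^{\beta u/4} - k^2\bigr) q_{k,\nu} + \tfrac{k(k-\delta)}{2}\, q_{k+1,\nu},
\end{align*}
with $q_{0,\nu}\equiv 1$ and $q_{k,\nu}(\nu,\lambda)=1$ for $k\ge 1$. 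By Proposition \ref{prop:alphaSDE}(2), $\alpha_{\lambda,\nu}(u)\to \alpha_\lambda(u)$ a.s.\ as $\nu\to-\infty$, so by bounded convergence $q_{k,\nu}(u,\lambda)\to Q_k(u,\lambda)$. Integrating the above ODE on an interval $[u_0,u_1]$, taking $\nu\to-\infty$, and applying dominated convergence (using $|q_{k,\nu}|\le 1$) transfers the identity to $Q_k$, showing $Q_k$ is $C^1$ in $u$ and satisfies the same ODE.

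Next, scale invariance \eqref{eq:scale_inv} applied with $s=u$, $t=u$ yields $\alpha_\lambda(u)\stackrel{d}{=}\alpha_{\lambda e^{\beta u/4}}(0)$, hence
\begin{align*}
Q_k(u,\lambda) = q_k(\lambda e^{\beta u/4}).
\end{align*}
For fixed $\lambda>0$ (resp.\ $\lambda<0$), the map $u\mapsto \lambda e^{\beta u/4}$ is a $C^\infty$ bijection from $\mathbb R$ onto $(0,\infty)$ (resp.\ onto $(-\infty,0)$). Since $Q_k(\cdot,\lambda)\in C^1$, this transfers to differentiability of $q_k$ on $\mathbb R\setminus\{0\}$. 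Evaluating the chain rule identity $\partial_u Q_k(u,\lambda) = \tfrac{\beta}{4}\lambda e^{\beta u/4} q_k'(\lambda e^{\beta u/4})$ at $u=0$ and substituting into the $u$-ODE gives \eqref{eq:genODE2} for $k\ge 2$. For $k=1$, the same procedure with $Q_0\equiv 1$ produces \eqref{eq:genODE1}. The limit \eqref{eq:genODE3} follows from $\alpha_0(u)\equiv 0$ and the continuity in $\lambda$ provided by Proposition \ref{prop:alphaSDE}(2), via bounded convergence.

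The main technical point is justifying the $\nu\to-\infty$ passage in the ODE (equivalently, in its integrated form), which reduces to the uniform bound $|q_{k,\nu}|\le 1$ together with the a.s.\ monotone convergence $\alpha_{\lambda,\nu}(u)\to \alpha_\lambda(u)$ from Proposition \ref{prop:alphaSDE}. Unlike the $\delta=n$ case, the system does not close at any finite level, but this causes no difficulty because each individual equation involves only three consecutive indices, so the per-$k$ ODE can be read off without solving a closed finite system.
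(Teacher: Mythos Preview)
Your proposal is correct and follows essentially the same approach as the paper: derive a $u$-ODE for $E[e^{ik\alpha_\lambda(u)}]$ by taking expectations in \eqref{eq:QkSDE}, then convert it into a $\lambda$-ODE via the scale invariance \eqref{eq:scale_inv} and the chain rule, with the boundary condition coming from continuity and bounded convergence. The only difference is cosmetic: you route explicitly through the $\nu$-approximation $q_{k,\nu}$ and pass to the limit via dominated convergence, whereas the paper works directly with $\tilde q_k(u,\lambda)=E[e^{ik\alpha_\lambda(u)}]$ and cites the earlier arguments (from the proof of Theorem~\ref{thm:even_beta}) to justify that the martingale terms vanish in expectation.
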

\begin{proof}
Let $\tilde q_k(u,\lambda)=E[e^{i k \alpha_\lambda(u)}]$ for $k\ge 1$. 
As argued before, we can take expectations in \eqref{eq:QkSDE} to get differential equations for $\tilde q_k(u,\lambda)$ in $u$ for $u\le 0$. By the scale invariance property \eqref{eq:scale_inv} we have 
\[
\tilde q_k(u,\lambda)=q_k(e^{\frac{\beta}{4} u} \lambda),
\]
which allows us to rewrite the differential equations for $\tilde q_k(u,\lambda)$, $\lambda\neq 0$ 
in terms of $q_k(\lambda)$, yielding the differential equations  \eqref{eq:genODE1} and \eqref{eq:genODE2}. The boundary condition \eqref{eq:genODE3} follows from the bounded convergence theorem and the continuity of $\alpha_\lambda$ in $\lambda$. 
\end{proof}

\subsection{Explicit solutions}

We have already shown how we can obtain the Sine-kernel, the pair correlation of the $\Sine_2$ process from Theorem \ref{thm:sineb_series}. In this section, we demonstrate how we can recover the known pair correlation function \eqref{eq:beta=4} for the $\operatorname{Sine}_4$ process, and also provide an explicit formula for the density of the $\HPb$ process for $\delta=1$.

When $n=2$ from \eqref{eq:AB} and \eqref{eq:vectors} we get 
\[
\mathbf{A}_2=\mat{-1}{-\frac12}{4}{-4}, \quad \mathbf{B}_2=\mat{1}{0}{0}{2}, \quad \mathbf{v}_2=[-\tfrac23, \tfrac16]^T.
\]
Hence, by Theorem \ref{thm:sineb_series} and Theorem \ref{thm:even_beta}, we have the following representation for the pair correlation of the  $\Sine_4$ process: 
\begin{align}\label{eq:rho4}
     \rho^{(2)}_{4}(0,\lambda)=\frac{1}{4\pi^2}+\frac1{2\pi^2}\left(-\frac{2}{3} \Re[q_1(\lambda)]+\frac16\Re[q_2(\lambda)]\right),
\end{align}
where $q_1, q_2$ solves the ODE system 
\begin{align}\label{eq:ODE4_1}
    \lambda q_1'(\lambda)&=\frac32-q_1(\lambda)-\frac12 q_2(\lambda)+i \lambda q_1(\lambda),\qquad q_1(0)=1,\\
    \lambda q_2'(\lambda)&=4 q_1(\lambda)-4 q_2(\lambda)+2 i \lambda q_2(\lambda), \qquad \phantom{....}q_2(0)=1.\label{eq:ODE4_2}
\end{align}
This system can be reduced to the following second-order ODE for $q_2$:
\begin{align*}
    \lambda^2 q_2''(\lambda)&=\left(-6 \lambda +3 i \lambda ^2\right) q_2'(\lambda )+\left(2 \lambda ^2+8 i \lambda -6\right) q_2(\lambda )+6,\qquad q_2(0)=1, q_2'(0)=\frac{2 i}{3},
\end{align*}
which  has the unique solution
\[
q_2(\lambda)=
\frac{3i(1+2i\lambda-e^{2i \lambda})}{2\lambda^3}-3i e^{i \lambda}\lambda^{-2}\int_0^\lambda \frac{\sin(x)}{x}dx.
\]
We can now obtain $q_1(\lambda)$ from \eqref{eq:ODE4_2}, 
and substituting these into  \eqref{eq:rho4} yields
the classical formula  \eqref{eq:beta=4} for  $\rho^{(2)}_{4}(0,\lambda)$.
\medskip

To obtain the density of the $\operatorname{HP}_{\beta,1}$ process we can use Theorem \ref{thm:even_beta} or Corollary \ref{cor:int_case_rho}. Here $n=1$, and we have $\mathbf{A}_1=-1$, $\mathbf{B}_1=1$, $\mathbf{v}_1=-\tfrac{1}{2}$.  Hence  $\mathbf{s}_k$ satisfies the scalar recursion 
\[
\mathbf{s}_0=1, \qquad \mathbf{s}_k=i (k+\tfrac{4}{\beta})^{-1} \mathbf{s}_{k-1}, \qquad k\ge 1
\]
which yields $\mathbf{s}_k=\frac{i^k}{\left(1+\tfrac{4}{\beta}\right)^{\uparrow k}}$.  Corollary \ref{cor:int_case_rho} gives
\begin{align}
   \rho_{\beta,1}^{(1)}(\lambda)=-\frac12 \sum_{k=1}^\infty \frac{(-1)^k }{\left(1+\tfrac{4}{\beta}\right)^{\uparrow 2k}}
   \lambda^{2k}
   =
   \frac{1 }{4 \pi (1+ \frac{2}{\beta}) (1+\frac{4}{\beta})} \, _1F_2\left(1;\frac{3}{2}+\frac{2}{\beta },2+\frac{2}{\beta };-\frac{\lambda ^2}{4}\right) \lambda ^2.
\end{align}
This expression can also be obtained by solving the ODE given by  Theorem \ref{thm:even_beta}:
\begin{align*}
    \tfrac{\beta}{4}\lambda q'(\lambda) &= (i\tfrac{\beta}{4} \lambda -1) q(\lambda) + 1 , \quad q(0) = 1.\\
    q(\lambda)
&= \frac{4}{\beta}\,e^{i\lambda}\,\lambda^{-4/\beta}
\int_{0}^{\lambda} s^{\,4/\beta - 1}\,e^{-i s}\,ds.
\end{align*}
Using the first equation in Corollary \ref{cor:int_case_rho} now yields the equivalent expression
\begin{align*}
     \rho_{\beta,1}^{(1)}(\lambda) = \frac{1}{2\pi}-\frac{1}{2\pi}\Re{q(\lambda)}=
\frac{1}{2\pi}-\frac{2}{\beta \pi}\lambda^{-4/\beta} \int_0^\lambda s^{4/\beta-1} \cos(\lambda-s) ds.
\end{align*}

\subsection{ODE representation for $\rho_{2n}^{(2)}(0,\lambda)$}

Theorem \ref{thm:even_beta} and Corollary \ref{cor:int_case_rho} with $\beta=2n$ provides a description for $\rho_{2n}^{(2)}(0,\lambda)$ as 
\[
\rho_{2n}^{(2)}(0,\lambda)=\frac{1}{4\pi^2}\left(1+2\;\mathbf{v}_n^T\Re \mathbf{q}(\lambda)\right)
\]
with $\mathbf{q}$ as the solution of the ODE \eqref{eq:ode_q_vec}. Motivated by \cite{Forrester_20212}, one can ask if there is a way to represent $\rho_{2n}^{(2)}(0,\lambda)$ itself as the solution of a differential equation using our representation. It turns out that the function $r_n(\lambda)=2 \mathbf{v}_n^T \mathbf{q}(\lambda)$ solves an order $n$ complex-valued linear differential equation where the coefficients are polynomials with degree at most $n$. This gives a representation for  $\rho_{2n}^{(2)}(0,\lambda)$ as the \emph{real part} of the solution of an order $n$  linear differential equation. 

We will only give the outline of the proof of this statement, without spelling out the computational details. 
It can be shown that $\mathbf{v}_n^T$ is the left eigenvector of $\mathbf{A}_n$ corresponding to the eigenvalue $-n$. From Lemma \ref{lem:An_eval} it follows that  $\mathbf{A}_n$ has $n$ linearly independent left eigenvectors, let $\mathbf{w}_k^T, 1\le k\le n-1$ denote the  ones besides $\mathbf{v}_n^T$. Introducing the new variables $r_k(\lambda)=\mathbf{w}_k^T \mathbf{q}(\lambda)$, $1\le k\le n-1$, we see that $\mathbf{r}(\lambda)=[r_1(\lambda),\dots,r_n(\lambda)]^T$ satisfies a transformed version of \eqref{eq:ode_q_vec}, with $\mathbf{B}_n, \mathbf{A}_n, \mathbf{e}_n, \mathbf{f}_n$ replaced with their respective representations in a new basis. In this new representation $\mathbf{A}_n$ will become a diagonal matrix, and by choosing the normalization in the eigenvectors $\mathbf{w}_k, 1\le k\le n-1$ appropriately, one can show that $\mathbf{B}_n$ will turn into a tridiagonal matrix with explicitly computable entries. This allows us to successively eliminate $r_{n-1}, r_{n-2}, \dots, r_1$ from the ODE system for $\mathbf{r}(\lambda)$, leading to an order $n$ linear ODE  with polynomial coefficients of degree at most $n$. The initial condition for $\mathbf{r}(0)$ will be transformed into equations for $r_n(0), r'_n(0), \dots, r_n^{(n-1)}(0)$.

A similar (but computationally more involved) approach could be taken to obtain a differential equation for which $\rho_{2n}^{(2)}(0,\lambda)$ itself is a solution.  Consider the power series representation of $\rho_{2n}^{(2)}(0,\lambda)$ from Theorem \ref{thm:sineb_series}. Let $\mathbf{z}(\lambda)=\sum_{j=1}^\infty \mathbf{s}_{2j} \lambda^{2j}$. Using the recursive definition of $\mathbf{s}_{k}, k\ge 0$ we can set up a second order vector valued linear differential equation for $\mathbf{z}(\lambda)$ with terms of the form $\lambda^2 \mathbf{z}''(\lambda)$, $\lambda \mathbf{z}'(\lambda)$, $\lambda \mathbf{z}(\lambda)$, and $\mathbf{z}(\lambda)$ (with matrix valued coefficients). Using the arguments of the previous paragraph it might be possible to turn this vector valued second order ODE into an order $2n$ linear ODE  for  $ \rho_{2n}^{(2)}(0,\lambda)=\frac{1}{2\pi^2} \mathbf{v}_n^T \cdot \mathbf{z}(\lambda)$. However, we only carried out this reduction for small values of $n$ with the assistance of computer algebra.

\subsection{Regularity of the pair correlation function $\rho_{\beta}^{(2)}(0,\lambda)$}

We turn to the proof of Theorem \ref{thm:beta_cont}. We will use the following proposition, which can be shown using standard results on SDE flows (see e.g.~Theorem 49 in \cite{Protter}).

\begin{proposition}\label{prop:sde_cont}
Let $Z$ be a two-sided Brownian motion.
There is a process $\alpha_{\lambda,\beta}(u)$, $\lambda\in \R$, $\beta>0$, $u\in \R$ that is adapted to the filtration of $Z$ as a process in $u$, analytic in $\lambda$ for fixed $u$ and  $\beta$, continuous in  $\beta$ for fixed $u$ and $\lambda$, and has the same distribution as the unique strong solution $u\mapsto \alpha_{\lambda}(u)$ of  \eqref{eq:alphaSDE_b} for a fixed $\beta$.     
\end{proposition}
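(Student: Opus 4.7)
The plan is to construct the joint process as the almost sure limit, as $\nu\to-\infty$, of the finite-horizon solutions $\alpha_{\lambda,\nu,\beta}(u)$ of \eqref{eq:alphaSDE_b} on $[\nu,\infty)$ with initial condition $\alpha_{\lambda,\nu,\beta}(\nu)=0$, driving all equations by the same Brownian motion $Z$ for every $\lambda$ and every $\beta$ simultaneously. The three properties demanded by the proposition (adaptedness, analyticity in $\lambda$, continuity in $\beta$) will then follow from a quantitative version of this convergence that is uniform in $\beta$ on compact sets.

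First I would fix $\nu\in\R$ and invoke classical SDE flow theory. The drift $b(u,\alpha;\lambda,\beta)=\lambda\tfrac{\beta}{4}e^{\beta u/4}-\tfrac{\beta}{2}\sin\alpha$ and the diffusion coefficient coming from $\Re[(e^{-i\alpha}-1)dZ]$ (which does not involve $\lambda$ or $\beta$ at all) are jointly smooth in all variables, globally Lipschitz in $\alpha$, and their Lipschitz constants are bounded uniformly over compact subsets of $(u,\lambda,\beta)$. Theorem~49 of \cite{Protter} (or Kunita's general stochastic flow theory) then produces a version of $\alpha_{\lambda,\nu,\beta}(u)$ that is adapted to the filtration of $Z$, jointly continuous in $(u,\lambda,\beta)\in[\nu,\infty)\times\R\times(0,\infty)$, analytic in $\lambda$ for fixed $(u,\beta)$, and continuous in the initial condition.

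The core step is to show that for each fixed $\lambda$ and $u$ the family $\alpha_{\lambda,\nu,\beta}(u)$ converges as $\nu\to-\infty$ uniformly in $\beta$ on each compact $K\subset(0,\infty)$. For $\nu_1<\nu_2<u$ the two processes $\alpha_{\lambda,\nu_1,\beta}$ and $\alpha_{\lambda,\nu_2,\beta}$ satisfy the same SDE on $[\nu_2,u]$ from starting values $\alpha_{\lambda,\nu_1,\beta}(\nu_2)$ and $0$ respectively, so a standard Gronwall estimate based on the uniform Lipschitz property yields
\[
E\sup_{s\in[\nu_2,u]}\bigl|\alpha_{\lambda,\nu_1,\beta}(s)-\alpha_{\lambda,\nu_2,\beta}(s)\bigr|^2\le C(u-\nu_2,K)\,E\bigl|\alpha_{\lambda,\nu_1,\beta}(\nu_2)\bigr|^2,
\]
with $C(u-\nu_2,K)$ finite since the Lipschitz constants are bounded for $\beta\in K$. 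It therefore suffices to show that $E|\alpha_{\lambda,\nu_1,\beta}(\nu_2)|^2\to 0$ as $\nu_2\to-\infty$, uniformly over $\nu_1\le\nu_2$ and $\beta\in K$. By Proposition~\ref{prop:alphaSDE} the quantity $\alpha_{\lambda,\nu_1,\beta}(\nu_2)$ is sandwiched (in absolute value) by $\alpha_{\lambda,\beta}(\nu_2)$, which by the scale invariance \eqref{eq:scale_inv} has the same distribution as $\alpha_{\lambda e^{\beta\nu_2/4},\beta}(0)$; since $\lambda\mapsto\alpha_{\lambda,\beta}(0)$ is analytic with $\alpha_{0,\beta}(0)=0$, this distribution concentrates near $0$ when $\lambda e^{\beta\nu_2/4}$ is small. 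The uniform-in-$\beta$ version of the tail bound is extracted from the joint continuity of the finite-$\nu$ flows in $(\lambda,\beta)$ from the first step, combined with monotonicity of the $\nu$-limit.

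Extracting a deterministic subsequence $\nu_k\to-\infty$ along which Borel--Cantelli gives almost sure uniform convergence in $\beta\in K$, we obtain the desired limit $\alpha_{\lambda,\beta}(u)$; it is continuous in $\beta$ for fixed $(u,\lambda)$ as a uniform limit of continuous functions, analytic in $\lambda$ for fixed $(u,\beta)$ because the uniform-on-compacta limit of a locally bounded family of entire functions is entire, and it has the correct distribution for each fixed $\beta$ since it coincides almost surely with the limit characterized in Proposition~\ref{prop:alphaSDE}(2). The main technical obstacle is turning the pointwise-in-$\beta$ scale-invariance identity into a uniform-in-$\beta$ tail estimate on $|\alpha_{\lambda,\nu_1,\beta}(\nu_2)|$; once that uniform tail bound is in hand, the remaining steps are standard.
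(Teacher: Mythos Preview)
The paper itself does not give a proof of this proposition at all: it simply asserts that the result ``can be shown using standard results on SDE flows (see e.g.~Theorem~49 in \cite{Protter}).'' Your proposal is therefore considerably more detailed than what the paper offers, and the overall plan (build the finite-horizon flows $\alpha_{\lambda,\nu,\beta}$ jointly in $(\lambda,\beta)$ via standard flow theory, then pass to the limit $\nu\to-\infty$) is exactly the right way to flesh out that citation.

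There is, however, a genuine gap in your Cauchy/Gronwall step. You write
\[
E\sup_{s\in[\nu_2,u]}\bigl|\alpha_{\lambda,\nu_1,\beta}(s)-\alpha_{\lambda,\nu_2,\beta}(s)\bigr|^2\le C(u-\nu_2,K)\,E\bigl|\alpha_{\lambda,\nu_1,\beta}(\nu_2)\bigr|^2
\]
and then assert that it ``therefore suffices'' to show $E|\alpha_{\lambda,\nu_1,\beta}(\nu_2)|^2\to 0$ as $\nu_2\to-\infty$. That inference is not valid: the Gronwall constant $C(u-\nu_2,K)$ grows like $\exp\bigl(c_\beta(u-\nu_2)\bigr)$ with $c_\beta$ of order $\beta+2$ (from the Lipschitz constants of $-\tfrac{\beta}{2}\sin\alpha$ and of $\Re[(e^{-i\alpha}-1)\,\cdot\,]$), whereas the scale-invariance argument you sketch gives at best $E|\alpha_{\lambda,\nu_1,\beta}(\nu_2)|^2=O\bigl(e^{\beta\nu_2/2}\bigr)$. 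The product blows up as $\nu_2\to-\infty$, so naive Gronwall over the full interval $[\nu_2,u]$ cannot close the argument. You correctly flag a difficulty at the end, but you locate it in the uniform-in-$\beta$ tail bound on $\alpha_{\lambda,\nu_1,\beta}(\nu_2)$; the more serious obstacle is that even a perfect such bound would be swamped by the Gronwall factor.

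To repair this you must exploit the specific structure of \eqref{eq:alphaSDE_b} rather than a generic Lipschitz estimate: both the drift $-\tfrac{\beta}{2}\sin\alpha$ and the diffusion coefficient $e^{-i\alpha}-1$ vanish at $\alpha=0$, so near the origin the dynamics are genuinely contractive (linearization gives $d\delta\approx -\tfrac{\beta}{2}\delta\,du+\delta\cdot dW$). One workable route is to run Gronwall only on a fixed finite window $[\nu_0,u]$ and use this contractive structure on $(-\infty,\nu_0]$ (where the $\lambda$-drift $\lambda\tfrac{\beta}{4}e^{\beta u/4}$ is negligible) to show that all the $\alpha_{\lambda,\nu,\beta}(\nu_0)$ lie in a small neighborhood of $0$, uniformly in $\nu\le\nu_0$ and $\beta\in K$. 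Alternatively, work with the transformed process $e^{i\alpha_\lambda(u)-i\lambda e^{\beta u/4}+u}$ as in \eqref{eq:SDE_1}, whose drift is bounded by a multiple of $e^u$, integrable on $(-\infty,0]$.
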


\begin{proof}[Proof of Theorem \ref{thm:beta_cont}]
Consider the process $\alpha_{\lambda,\beta}(u)$ given by  Proposition \ref{prop:sde_cont}, and set
\[
q_k(\beta,\lambda)=E [\exp(i k \alpha_{\lambda,\beta}(0))].
\]
From Proposition \ref{prop:sde_cont} it follows that these functions are analytic in $\lambda$ and continuous in $\beta$, and by definition we have $|q_k|\le 1$.  By Theorem \ref{thm:sineb_corr} we have
\begin{align}\label{eq:qqq}
    \rho_\beta^{(2)}(0,\lambda)=\frac{1}{4\pi^2}+\frac{1}{2\pi^2}\sum_{k=1}^\infty  \frac{(-\beta/2)^{\uparrow k}}{(1+\beta/2)^{\uparrow k}} \Re q_k(\beta,\lambda).
\end{align}
The first part of the theorem now follows:  the partial sums in \eqref{eq:qqq} are continuous in $\lambda$ and in $\beta$, and these continuous functions converge uniformly to  $\rho_\beta^{(2)}(0,\lambda)$ because of the summable bound \eqref{eq:coeff_bnd}.

To see that for $\beta> 2, \lambda>0$ we have joint continuity in $\lambda$ and $\beta$ we use the following statement: if $f(x,y)$ is continuous in both variables, and it is also locally uniformly Lipschitz in $y$ then $f(x,y)$ is jointly continuous. Indeed, if $(x_n,y_n)\to (x,y)$ then 
\[
|f(x,y)-f(x_n, y_n)|\le |f(x,y)-f(x_n,y)|+|f(x_n,y)-f(x_n,y_n)|\to 0,
\]
where we use the continuity in $x$ in the first term and the uniform Lipschitz bound in the 
second term.

From Proposition \ref{prop:ODE_gen} we see that $q_k(\beta,\lambda)$ satisfies the ODEs \eqref{eq:genODE1} and \eqref{eq:genODE1} with $\delta=\frac{\beta}{2}$. From this, it follows that for $k\ge 1$, $\lambda>0$ we have
\[
|\partial_\lambda \Re q_k(\beta,\lambda)|\le c(k^2 \lambda^{-1}+k)
\]
with a $\beta$-dependent constant $c$, that is locally bounded in $\beta$. Together with the bound \eqref{eq:coeff_bnd} this implies that for $\beta>2$, $\lambda>0$ the function $ \rho_\beta^{(2)}(0,\lambda)$ is locally uniformly Lipschitz in $\lambda$, completing the proof. 
\end{proof}

\section{Open problems} \label{sec:open}

We finish with a list of open problems related to our results. \medskip

Our estimates in the proof of   Theorem \ref{thm:beta_cont} are far from optimal. Potentially,  a more careful asymptotic analysis of $E[e^{i k \alpha_\lambda(0)}]$ could produce bounds that match the conjectured order $\lambda^{-\min(2,4/\beta)}$  for the truncated pair correlation.  
\begin{problem}
  Improve the bounds in Theorem \ref{thm:beta_cont} for the truncated pair correlation function of the $\Sineb$ process for general $\beta>0$.
\end{problem}

Proposition \ref{prop:ODE_gen} shows that $q_k(\lambda)=E[e^{i k \alpha_\lambda(0)}]$ solves an infinite system of ODEs, but it is not clear that the solution is unique. A uniqueness result would provide a self-contained characterization of the pair correlation function $\rho_{\beta}^{(2)}(0,\lambda)$.

\begin{problem}
Show that the infinite ODE system \eqref{eq:genODE1}-\eqref{eq:genODE3}  (with possibly some additional constraints) has a unique solution.
\end{problem}

Our proof of Theorem \ref{thm:1p_HP} relies on $\delta$ being a positive real. A natural question to ask if the result could be extended to more general $\delta$ values.
\begin{problem}
    Extend Theorem \ref{thm:1p_HP} for the density of the $\HPb$ process for complex $\delta$ with $\Re \delta>-1/2$.
\end{problem}

It would also be interesting to extend the second part of Theorem \ref{thm:beta_cont} for all $\beta>0$.
\begin{problem}
    Prove (or disprove) that $\rho_{\beta}^{(2)}(0,\lambda)$ is jointly continuous in $\lambda$ and $\beta$ for $\lambda\in \R$, $\beta>0$. 
\end{problem}

The small $\lambda$ asymptotics of Corollary \ref{cor:small_lambda} should hold for all $\beta>0$, but we did not find a reference for this in the literature.  

\begin{problem}
Extend the small $\lambda$ asymptotics of Corollary \ref{cor:small_lambda} for all $\beta>0$ with the factorials replaced with Gamma functions in \eqref{eq:small_lambda}.
\end{problem}

As mentioned before, it would be instructive to show directly that the representation for $\rho^{(2)}_{2n}(0,\lambda)$ found in Theorem \ref{thm:sineb_series} agrees with \eqref{eq:paircorr_For}.

\begin{problem}
    Show directly that the power series given in Theorem \ref{thm:sineb_series} is equivalent to the integral expression \eqref{eq:paircorr_For} proved by Forrester.
\end{problem}

Finally, the most challenging problem is to extend the results of this paper to provide any information about higher-order correlation functions.

\begin{problem}
Find a characterization of the higher order correlation functions of the $\Sineb$ process in terms of the $\alpha_\lambda$ diffusion given in \eqref{eq:alphaSDE_b}.
\end{problem}


\def\cprime{$'$}

\end{document}